\titleformat*{\section}{\sc\centering\large} 
\titleformat*{\subsection}{\bf} 
\titleformat*{\subsubsection}{\it} 
\newcommand{\Sa}{{\mathbb{S}}^{1}}
\DeclarePairedDelimiterX{\norm}[1]{\lVert}{\rVert}{#1}
\newcommand{\abs}[1]{\left\lvert#1\right\rvert}
\newcommand{\T}{\mathbb{T}}
\newcommand{\R}{\mathbb{R}}
\newcommand{%
    \def\svgwidth{1\columnwidth}
    \import{./}{.pdf_tex}
}[2][1]{%
    \def\svgwidth{#1\columnwidth}
    \import{./}{#2.pdf_tex}
}
\newcommand{\be}{\begin{equation}}
\newcommand{\ee}{\end{equation}}
\newtheorem*{rep@theorem}{\rep@title}
\newcommand{\newreptheorem}[2]{%
\newenvironment{rep#1}[1]{%
 \def\rep@title{#2 \ref{##1}}%
 \begin{rep@theorem}}%
 {\end{rep@theorem}}}
\newtheorem{theorem}{Theorem}[section]
\newtheorem{lemma}[theorem]{Lemma}
\newtheorem{corollary}[theorem]{Corollary}
\newtheorem{proposition}[theorem]{Proposition}
\begin{document}
\setcounter{Maxaffil}{1}
 \renewcommand\Affilfont{\itshape}
\title{Towards Long-time Geometrization: Stability of the Double-Cusp Spacetimes Under ${\rm T}^2$-Symmetry}
\date{}
\author{{\Large Alejandro Bellati}\thanks{abellati@math.princeton.edu}}
\affil{Princeton University, Department of Mathematics, \authorcr  Fine Hall, Washington Road, Princeton, NJ 08544, USA.}
\author{{\Large Martín Reiris}\thanks{mreiris@cmat.edu.uy}}
\affil{Universidad de la Rep\'ublica, Centro de Matemática, \authorcr  Iguá 4225, Montevideo, 11400, Uruguay.}
\maketitle

\begin{abstract}
Since the early years of General Relativity, understanding the long-time behavior of the cosmological solutions of Einstein's vacuum equations has been a fundamental yet challenging task. Solutions with global symmetries, or perturbations thereof, have been extensively studied and are reasonably understood. On the other hand, thanks to the work of Fischer-Moncrief and M. Anderson, it is known that there is a tight relation between the future evolution of solutions and the Thurston decomposition of the spatial 3-manifold. Consequently, cosmological spacetimes developing a future asymptotic symmetry should represent only a negligible part of a much larger yet unexplored solution landscape. In this work, we revisit a program initiated by the second named author, aimed at constructing a new type of cosmological solution first posed by M. Anderson, where (at the right scale) two hyperbolic manifolds with a cusp separate from each other through a thin torus neck. Specifically, we prove that the so-called double-cusp solution, which models the torus neck, is stable under small $\Sa \times \Sa$ - symmetry-preserving perturbations. The proof, which has interest on its own, reduces to proving the stability of a geodesic segment as a wave map into the hyperbolic plane and partially relates to the work of Sideris on wave maps and the work of Ringstr\"om on the future asymptotics of Gowdy spacetimes. Additionally, we also establish the future long-time existence for the wave map equations for almost all initial data. The proof of this relies primarily on pointwise estimates derived using the energy-momentum tensor.
\end{abstract}

\newpage

\tableofcontents

\section{Introduction}

Since the early years of General Relativity, understanding the long-time behavior of the cosmological solutions of Einstein's equations has been a fundamental yet quite challenging task. Solutions with spatial symmetries, like the spatially homogeneous Bianchi models or the Gowdy $\mathbb{T}^{2}$-symmetric spacetimes, have been extensively studied over the decades and are reasonably well understood \cite{ringstrom2009cauchy}, \cite{gowdy1974vacuum}, \cite{ringstrom2009strong}. While all these models are very valuable and provide explicit examples of future dynamics they fall short when the goal is to describe the full set of possible future behaviors. In this work, we revisit a program initiated by the second named author, aimed at constructing a new type of cosmological solution first posed by M. Anderson. This solution exhibits a qualitative behavior that differs significantly from any other known model. As we will explain, such a solution would provide strong support to some ideas developed by Fischer-Moncrief and Anderson relating fundamentally the topology of the Cauchy 3-hypersurfaces to the dynamics of the cosmological solutions \cite{fischer2000reduced}, \cite{fischer2001reduced}, \cite{anderson2001long}, \cite{anderson2004cheeger}.
\vspace{.1cm}

Motivated by certain considerations on the Thurston geometrization conjecture, Anderson posed in \cite{anderson2004cheeger} (see the end of the paper) the problem of finding a cosmological solution where coarsely speaking, two hyperbolic 3-manifolds with a cusp\footnote{A cusp is a 3-manifold $(-\infty,0]\times \mathbb{T}^{2}$ with a hyperbolic metric of the form $g_{H}=dx^{2}+e^{2x}g_T$, with $g_T$ flat $x-$independent on $\mathbb{T}^2$. Cusps are discussed later in the article.} separate from each other through a thin torus neck. In Anderson's picture, hypersurfaces $\Sigma_{k}$ of mean curvature $k\in (-\infty,0)$ evolve in the expanding direction $k\uparrow 0$, but the geometry at each time $k$ is scaled so that the mean curvature of $\Sigma_{k}$ is $-3$. Under this scaling, the two hyperbolic pieces with their corresponding cusp should emerge over time, separating from each other along an increasingly thin torus neck that develops asymptotically a $\mathbb{T}^{2}$-symmetry. \Cref{fig:posedin} schematizes that behavior. This spacetime would comprise a new and non-trivial example of a cosmological solution of the vacuum Einstein equations, whose spatial geometry (at the mentioned scale) evolves towards the Thurston decomposition of its Cauchy hypersurface. Furthermore, for this solution, the Fischer-Moncrief's reduced volume $\nu(k):=(-k/3)^{3}Vol(\Sigma_{k})$, would decay towards its topological lower bound given by $(-\sigma/6)^{3/2}$, where $\sigma$ is the Yamabe invariant of the $\Sigma$'s, \cite{fischer2000reduced}, \cite{fischer2001reduced}.

\begin{figure}[h]
\centering
    \def\svgwidth{.9\columnwidth}
\begingroup%
  \makeatletter%
  \providecommand\color[2][]{%
    \errmessage{(Inkscape) Color is used for the text in Inkscape, but the package 'color.sty' is not loaded}%
    \renewcommand\color[2][]{}%
  }%
  \providecommand\transparent[1]{%
    \errmessage{(Inkscape) Transparency is used (non-zero) for the text in Inkscape, but the package 'transparent.sty' is not loaded}%
    \renewcommand\transparent[1]{}%
  }%
  \providecommand\rotatebox[2]{#2}%
  \newcommand*\fsize{\dimexpr\f@size pt\relax}%
  \newcommand*\lineheight[1]{\fontsize{\fsize}{#1\fsize}\selectfont}%
  \ifx\svgwidth\undefined%
    \setlength{\unitlength}{533.32071378bp}%
    \ifx\svgscale\undefined%
      \relax%
    \else%
      \setlength{\unitlength}{\unitlength * \real{\svgscale}}%
    \fi%
  \else%
    \setlength{\unitlength}{\svgwidth}%
  \fi%
  \global\let\svgwidth\undefined%
  \global\let\svgscale\undefined%
  \makeatother%
  \begin{picture}(1,0.39942167)%
    \lineheight{1}%
    \setlength\tabcolsep{0pt}%
    \put(0,0){\includegraphics[width=\unitlength,page=1]{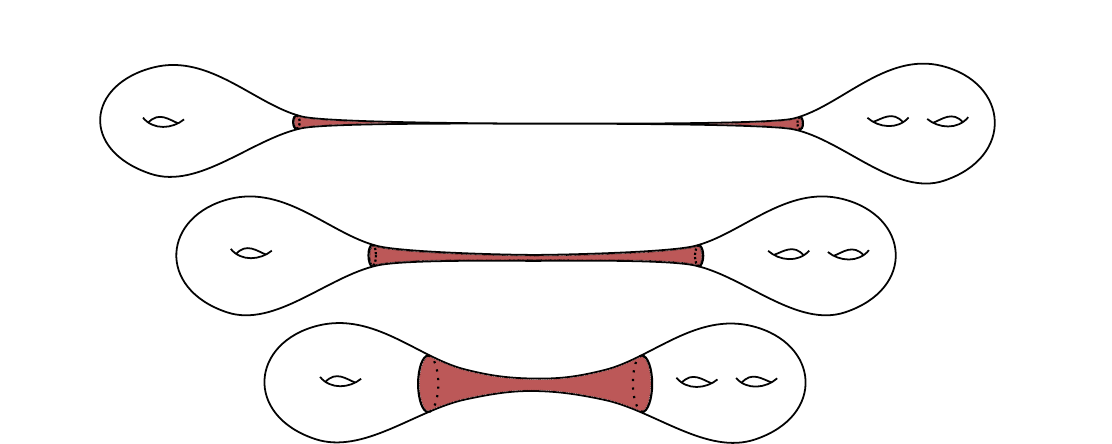}}%
    \put(0.40720955,0.33354263){\color[rgb]{0.69019608,0.23137255,0.23137255}\transparent{0.85000002}\makebox(0,0)[lt]{\lineheight{1.25}\smash{\begin{tabular}[t]{l}Torus neck\end{tabular}}}}%
    \put(-0.00368051,0.37092976){\color[rgb]{0,0,0}\transparent{0.85000002}\makebox(0,0)[lt]{\lineheight{1.25}\smash{\begin{tabular}[t]{l}Hyperbolic piece\end{tabular}}}}%
    \put(0.68820042,0.37092778){\color[rgb]{0,0,0}\transparent{0.85000002}\makebox(0,0)[lt]{\lineheight{1.25}\smash{\begin{tabular}[t]{l}Hyperbolic piece\end{tabular}}}}%
  \end{picture}%
\endgroup%

\caption{Expected behavior of the type of solution posed in \cite{anderson2004cheeger}.}
\label{fig:posedin}
\end{figure}

The double-cusps solutions were introduced in \cite{reiris2010ground} and are explicit $\mathbb{T}^{2}=\Sa\times\Sa$-symmetric solutions on $\mathbb{R}\times\mathbb{R}\times \mathbb{T}^{2}$ tailored to model the evolution of the torus neck. As we will see, they enjoy all the required global and asymptotic properties and are therefore a crucial piece in this context. However, before studying the combined evolution of the torus neck and the two hyperbolic manifolds with cusps, it is essential to establish that the double-cusp solutions are future-stable, and providing sharp decay estimates. This article is aimed mainly to prove the stability of these solutions under $\mathbb{T}^{2}$-symmetric perturbations. Interestingly, this problem is equivalent to proving the stability of a non-affinely parametrized geodesic segment (modeling the double cusp) as a wave map from a flat 3-dimensional spacetime into the hyperbolic plane. We note that the analysis of $\mathbb{T}^{2}$-symmetric perturbations is a natural first step inside the larger program, as non-symmetric degrees of freedom are expected to decay more rapidly than the symmetric ones. A detailed investigation of non-symmetric perturbations will be addressed in future work. Although the article primarily focuses on the stability of double cusps, we also establish a long-time existence theorem for this class of wave maps, applicable to nearly all initial data. This result is not directly needed for the stability proofs themselves; however, the $C^0$ estimate obtained during its derivation is useful for proving the non-polarized stability.     
\vspace{.2cm}

The stability problem of geodesics as wave maps was studied by Sideris in \cite{sideris1989global}, but the problem considered in that work is different from ours. Also, the future evolution of $\mathbb{T}^{3}$-Gowdy spacetimes was studied by Ringstr\"om in \cite{ringstrom2004wave}, through a wave map that, of course, has the same origin as ours. While in our case the wave map defines curves with the same ends as the geodesic segment, in \cite{ringstrom2004wave} it defines loops inside hyperbolic space. 
\vspace{0.2cm} 

In the next section we introduce the double cusp spacetimes and describe their global properties. The discussion there is not directly relevant for the proof of the main results, but helps to understand the geometric context and physical motivation. We also introduce in \cref{DCGP} the main equations that one needs for the rest of the paper.

\section{Double-cusps and their global properties}\label{DCGP}

\subsection{The evolution equations and the explicit expression of the double cusps.}

The double cusp spacetimes are vacuum $\mathbb{T}^{2}$-symmetric metrics with the following form \cite{reiris2010ground}, 
\begin{equation}\label{eq:metricform}
g = e^{2a}(-dt^2 + dx^2) + R(e^{2W} + q^2e^{-2W})d\theta_1^2 -2Rqe^{-2W}d\theta_1d\theta_2 + Re^{-2W}d\theta_2^2,
\end{equation}
over the manifold $\R_{t} \times \R_{x} \times \Sa_{\theta_{1}}\times \Sa_{\theta_{2}}$, where here $a$, $R$, $W$, and $q$ depend only on $t$ and $x$. The hypersurfaces $t = {\rm const}$, will be Cauchy hypersurfaces and are diffeomorphic to a ``neck'' $\mathbb{R}\times \mathbb{T}^{2}$. Metrics of this form are similar to the $\mathbb{T}^3$-Gowdy's metrics,  \cite{gowdy1974vacuum}, but differ from them in that $R$ is not taken as a coordinate and that $x$ is not periodic. Locally, the metric form (\ref{eq:metricform}) can be derived from the assumption of a $\mathbb{T}^2-$ free action by isometries on the Cauchy surfaces plus the vanishing of the twist constants (see \cite{gowdy1974vacuum}, \cite{ringstrom2004gowdy}, \cite{chrusciel1990space}). The double-cusps will be non-stationary spacetimes and this will add extra difficulties in the stability problem.

Before giving the explicit form of the double cusps, let us present the equations for $R$, $W$, $q$, and $a$ derived from the Einstein equations.  These equations will be used througouth the article as we will study the stability of the double cusps under metric perturbations maintaining the form \ref{eq:metricform}. They are,
\begin{gather}
R_{xx}-R_{tt} = 0  \label{eq:sysR},\\
W_{tt}-W_{xx} + \frac{R_t}{R}W_t - \frac{R_x}{R}W_x + \frac{(q_t^2-q_x^2)}{2}e^{-4W} = 0  \label{eq:sysW},\\
q_{tt}-q_{xx} + \frac{R_t}{R}q_t - \frac{R_x}{R}q_x -4q_tW_t + 4q_xW_x = 0 \label{eq:sysq},\\
a_{tt}-a_{xx} + \frac{R_x^2-R_t^2}{4R^2} + W_t^2-W_x^2 + \tfrac{1}{4}(q_t^2-q_x^2)e^{-4W} = 0 \label{eq:wave_a},
\end{gather}
and,
\begin{gather}
 a_t \frac{R_t}{R} + a_x\frac{R_x}{R} + \frac{1}{4}\left(\frac{R_x^2}{R^2} + \frac{R_t^2}{R^2}\right) - \frac{R_{xx}}{R} - (W_x^2 + W_t^2) - \frac{1}{4}e^{-4W}(q_x^2 + q_t^2) = 0 \label{eq:a_fuerte}, \\
 a_{x}\frac{R_t}{R} +a_{t}\frac{R_x}{R} - \frac{R_{tx}}{R} + \frac{R_xR_t}{2R^2} + 2W_tW_x - \frac{1}{2}e^{-4W}q_xq_t = 0 \label{eq:a_debil}.
\end{gather}
The equations (\ref{eq:sysR}), (\ref{eq:sysW}), (\ref{eq:sysq}) and (\ref{eq:wave_a}) are the dynamical equations for $R$, $W$, $q$ and $a$, and (\ref{eq:a_fuerte}) and (\ref{eq:a_debil}) are the constraint equations. The dynamical equation for $R$ decouples from all the others, and the dynamical equations for $W$ and $q$ decouple from that of $a$. In certain cases, one can solve globally for $a_x$ and $a_t$ from the \cref{eq:a_fuerte,eq:a_debil} and then simply perform line integrals to find $a$. In this case, $a$ is determined from $R, W$, and $q$ up to an integration constant.

It is crucial but also well known that the \cref{eq:sysW,eq:sysq} are wave map equations into the hyperbolic plane. This is seen as follows. Think of the hyperbolic plane $\mathbb{H}$ as $\mathbb{R}^{2}=\mathbb{R}_{x}\times \mathbb{R}_{y}$ endowed with the metric $h = 4dy^2 + e^{4y}dx^2$, consider the manifold $\R_{t} \times \R_{x} \times \Sa_{\phi}$ endowed with the metric $k = 4e^{4t}(-dt^2 + dx^2) + R^2(t,x)d\phi^2$ and denote this Lorentzian manifold as $\mathbb{K}$. Then $W$ and $q$ satisfy the \cref{eq:sysW,eq:sysq} if and only if the map $\chi:\mathbb{K}\longrightarrow \mathbb{H}$ given by
\begin{equation}
\chi(t,x,\phi) = (q(t,x),-W(t,x)),
\label{eq:chi}
\end{equation}
is a wave map between the two manifolds. Another way of expressing this is that \cref{eq:sysW,eq:sysq} are the Euler-Lagrange equations of the action
\begin{align}
S =  \int \partial_l \chi^i \partial_m \chi^j h_{ij}k^{lm} \, dV_{k} =  2\pi \int R(4(W_{x}^{2}+W_{t}^{2})+(q_{x}^{2}+q_{t}^{2})e^{-4W})dtdx.
\end{align}
Let us now see the explicit form of the double cusps. First, for all the double-cusp solutions, one takes $R(t,x)=R_0 e^{2t}\cosh 2x$ with $R_0$ a constant, which of course solves the \cref{eq:sysR}. Second, one requires $W$ and $q$ to be $t$-independent, i.e. $W=W(x)$ and $q=q(x)$. With this ansatz, the Euler-Lagrange equations are equivalent to the Euler-Lagrange equations of the action,
\begin{equation}
F=\int_{\mathbb{R}} |\gamma'|^{2}\cosh(2x)dx
\end{equation}
where $\gamma(x)=\chi(x)$, and whose solutions are well known to be non-affinely parametrized geodesic segments of the hyperbolic plane. When the geodesic segment is vertical and thus has $q$ constant, we say that the double cusp is {\it polarized}. Their explicit form is,
\begin{gather}\label{eq:background}
R = R_0 e^{2t}\cosh(2x),\\ \label{eq:background2}
W = W_1 + W_0 \arctan(e^{2x}),\\ \label{eq:background3}
q = q_{0},\\   \label{eq:background4}
a = a_{0} - \left(\frac{1}{2} + \frac{W_0^2}{2}\right)\frac{1}{2}\ln(\cosh(2x)) + \left(\frac{3}{2} + \frac{W_0^2}{2}\right)t,
\end{gather}
with $R_{0}>0, W_{0}\neq 0, W_{1}$, $q_{0}$ and $a_{0}$ constants. The non-polarized double-cusps are created by transforming polarized ones by an isometry of the hyperbolic plane (see Figure \ref{fig:asymptotic_conv}), and the explicit expression won't be particularly relevant. For this reason, without lost of generality, we will work with polarized double cusps solutions. From now on, we will be denoted this by $R_{b}, W_{b}, q_{b}$ and $a_{b}$, where `b' stands for `background'. 

The stability problem that we face amounts basically to the stability of $R_{b}$, $W_{b}$, $q_{b}$ and $a_{b}$ as particular solutions of a system of partial differential equations, and in fact this is pretty much the viewpoint that we will take. But there is a caveat here, as the stability, say in certain convenient norms, of them does not necessarily amount to the future stability of the spacetime. We will address this below and in the following section.

To study the evolution of $R, q, W$ and $a$, we will study first the wave equation for $R$, then the wave map equation for $(W,q)$, and finally, we will study $a$, which will be determined entirely from them. To control $R$ and of $(W, q)$, we will use some natural norms that are standard from a PDE point of view but that may not guarantee the stability of $a$, and thus of the spacetime as a whole as commented earlier. In the next section, we will see that this conflict can be solved just using finer norms to measure the smallness of the initial data for $R$, $W$, and $q$ over the Cauchy surface $t=0$, which implies a smallness of the initial data of $a$, sufficient to guarantee future geodesic completeness.  

In the following section, we review the global properties of the double cusps. This information will not play a role when studying the stability of $R$ and of $(W,q)$, which will be treated as a standard PDE problem, but it will help to understand the discussion about the stability of $a$ and therefore of the double cusp as a spacetime.    
\begin{figure}[!ht]
\centering
    \def\svgwidth{0.7\columnwidth}
    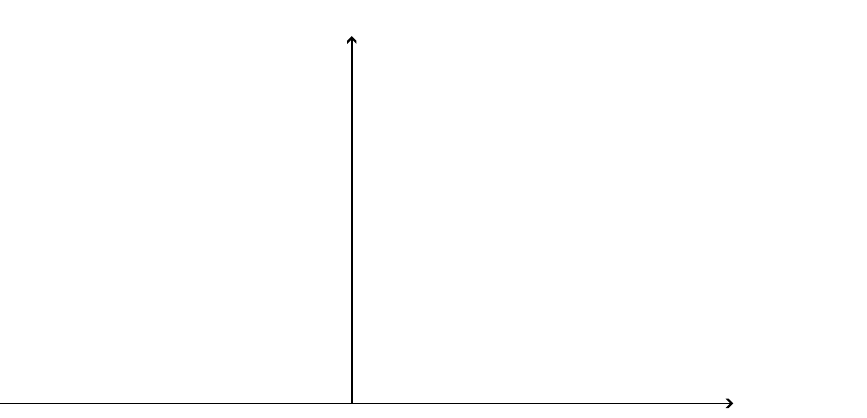

\caption{Two double cusps are represented by geodesics. A perturbation is represented as the curve $x \mapsto \chi(t,x)$ for a fixed $t$. As time evolves, this curve will move. The figure also illustrates how an arbitrary double cusp can be seen as a polarized double cusp.}
\label{fig:asymptotic_conv}
\end{figure}
\subsection{Global properties of the double cusps.}

A main property of the double cusps is that at each of their two ends, one can define spacetime coordinates $(t',x')$ and $(t'',x'')$ where one can observe the spatial scaled metrics converge towards hyperbolic cusps. This is one of the main properties making double cusps adequate to model the necks of the solutions posed in \cite{anderson2004cheeger}. To explain all that, we begin recalling certain notions on hyperbolic manifolds and flat cone spacetimes. If $(\mathbb{M},g_{H})$ is a hyperbolic manifold, then $\mathbb{R}_{\tau}\times \mathbb{M}$ endowed with the metric $g = -d\tau^{2}+\tau^{2}g_{H}$ is a flat spacetime (hence a solution of the Einstein equations) called a flat cone. The mean curvature of the hypersurface $\tau=\tau_{0}$ is $k_{0}=-3/\tau_{0}$. Therefore, when the spacetime metric $g$ is scaled as $(k_{0}/3)^{2}g = \tau_{0}^{-2}g=d(\tau/\tau_{0})^{2}+(\tau/\tau_{0})^{2}g_{H}$, then the mean curvature of the hypersurface $\tau=\tau_{0}$ becomes $-3$ and the induced 3-metric $g_{H}$. This is called CMC scaling and can be made at any CMC hypersurface $\Sigma_{k}$ of mean curvature $k$ inside a spacetime. Hyperbolic manifolds of finite volume can be non-compact. When this is so, the manifold has a finite number of truncated ``cusps" of the form $\mathbb{C}=(-\infty,x_{0}]_{x}\times \mathbb{T}^2$ with $g_{H}=dx^{2}+e^{2x}g_T$, where $g_T$ is an $x-$independent flat metric on $\mathbb{T}^2$. A cusp spacetime is a flat cone with $\mathbb{M} = \mathbb{R}_{x}\times \mathbb{T}^{2}$ and $g_{H} = dx^{2}+e^{2x}g_T$, with $g_T$ $x-$independent and flat. As mentioned a few lines above, the two ends of double cusps are asymptotic to cusp spacetimes as $t\rightarrow \infty$. This behavior is not observed in the coordinates $t,x$ but rather in new coordinates linearly related to them. On a double cusp solution, consider the new coordinates,
\begin{align}  
& t'=-\bigg(\frac{1}{2}+\frac{W_{0}^{2}}{2}\bigg)x+\bigg(\frac{3}{2}+\frac{W_{0}^{2}}{2}\bigg)t,\\
& x' = -\bigg(\frac{1}{2}+\frac{W_{0}^{2}}{2}\bigg)t + \bigg(\frac{3}{2}+\frac{W_{0}^{2}}{2}\bigg)x.
\end{align}
These coordinates are plotted in \Cref{fig:doublebehavior}. When we fix $x'$ and increase $t'$, or when we fix $t'$ and increase $x'$, both $x$ and $t$ increase. In this sense, these new coordinates are adapted to the `right' end. It is over these coordinates that the double-cusp metric approaches a cusp spacetime metric ($\tau = e^{t'}$). This is easy to show and has been done in \cite{reiris2010ground} in detail. On the `left' end, one can also define coordinates $x'',t''$ where the evolution displays a similar behavior. The whole picture is represented in \Cref{fig:doublebehavior}. This phenomenon is best observed globally along the CMC foliation. Indeed, double cusps admit a global CMC foliation of Cauchy hypersurfaces $\Sigma_{k}$ covering the whole spacetime, where the mean curvature $k$ ranges on $(-\infty,0)$, \cite{reiris2010ground}. More specifically, there is a Cauchy hypersurface $\Sigma_{-3}$ of mean curvature $-3$ defined by a graph $t=s(x)$ and any other leaf of the CMC foliation is obtained translating in $t$ the graph of $s(x)$. Furthermore, the graph of $s(x)$ approaches a $t'={\rm const}$ and $t''={\rm const}$ line as $x$ goes to $\infty$ and $-\infty$ respectively. Hence, one can simultaneously observe the convergence to the flat cones on the right and left ends by following the CMC foliation $\Sigma_{k}$, $k\uparrow 0$. If one performs CMC scalings so that the mean curvature of each leaf $\Sigma_{k}$ becomes $-3$, then a convergence-collapse picture emerges. Roughly speaking, the scaled metric over the $\Sigma_{k}$ converges to a hyperbolic cusp metric on each of the two ends (one must follow the $x'=$ const and $x''=$ const directions), while the central part collapses its volume while keeping its curvature bounded so that the narrow necks appear to look like thin and long lines. \Cref{fig:doublebehavior} depicts this phenomenon.   
\begin{figure}[h]
\centering%
    \def\svgwidth{.7\columnwidth}
    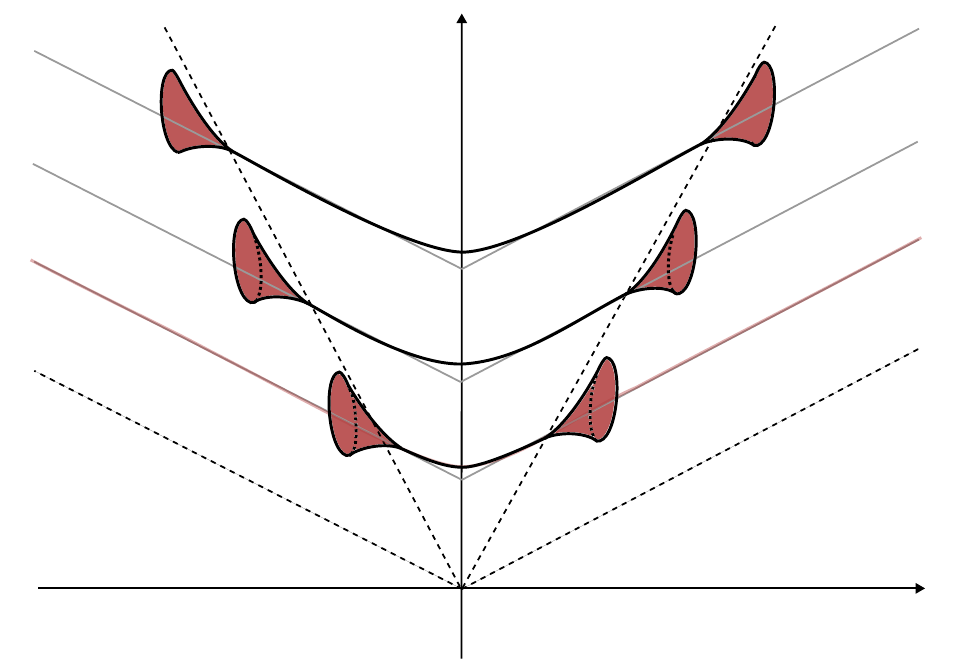

\caption{Double cusp's behavior over the CMC foliation after CMC scalings.}
\label{fig:doublebehavior}
\end{figure}
There is a relevant but standard change of variables $(t,x)\rightarrow (R,V)$ given by,
\begin{equation}
R=R_{0}e^{2t}\cosh(2x),\quad V=R_{0}e^{2t}\sinh(2x),
\end{equation}
where one can better observe certain important global facts. The spatial coordinate $V$ is the conjugate of $R$ and satisfies the wave equation too. With this change, we have,
\begin{equation}
-dt^{2}+dx^{2} = \frac{1}{4R_0^2e^{4t}}(-dR^2+dV^2) = \frac{1}{4(R^2-V^2)}(-dR^2+dV^2)
\end{equation}
so light rays in the plane $(t,x)$ are mapped into light rays in the $(R,V)$ plane. The spacetime region is the region $R>|V|$ and the slice $t=0$ maps into the hyperbola $R^{2}-V^{2}=R_{0}^{2}$, $R>0$. This is displayed in \Cref{fig:rv}. The whole picture proves that $t=0$ is a Cauchy surface for the double cusp spacetime. It can be seen that the spacetime cannot be smoothly extended to the past boundary $R=|V|$, which is singular. The CMC curve $t=s(x)$ is mapped into a curve that is also asymptotic to the lines $R=V, V>0$ and $R=-V, V<0$ but is not a hyperbola, of course. Though these two Cauchy hypersurfaces look similar, there is a clear distinction between them: the former approaches the past boundary faster than the latter.

\begin{figure}[!ht]
\centering%
    \def\svgwidth{0.6\columnwidth}
\begingroup%
  \makeatletter%
  \providecommand\color[2][]{%
    \errmessage{(Inkscape) Color is used for the text in Inkscape, but the package 'color.sty' is not loaded}%
    \renewcommand\color[2][]{}%
  }%
  \providecommand\transparent[1]{%
    \errmessage{(Inkscape) Transparency is used (non-zero) for the text in Inkscape, but the package 'transparent.sty' is not loaded}%
    \renewcommand\transparent[1]{}%
  }%
  \providecommand\rotatebox[2]{#2}%
  \newcommand*\fsize{\dimexpr\f@size pt\relax}%
  \newcommand*\lineheight[1]{\fontsize{\fsize}{#1\fsize}\selectfont}%
  \ifx\svgwidth\undefined%
    \setlength{\unitlength}{410.26919685bp}%
    \ifx\svgscale\undefined%
      \relax%
    \else%
      \setlength{\unitlength}{\unitlength * \real{\svgscale}}%
    \fi%
  \else%
    \setlength{\unitlength}{\svgwidth}%
  \fi%
  \global\let\svgwidth\undefined%
  \global\let\svgscale\undefined%
  \makeatother%
  \begin{picture}(1,0.51767105)%
    \lineheight{1}%
    \setlength\tabcolsep{0pt}%
    \put(0,0){\includegraphics[width=\unitlength,page=1]{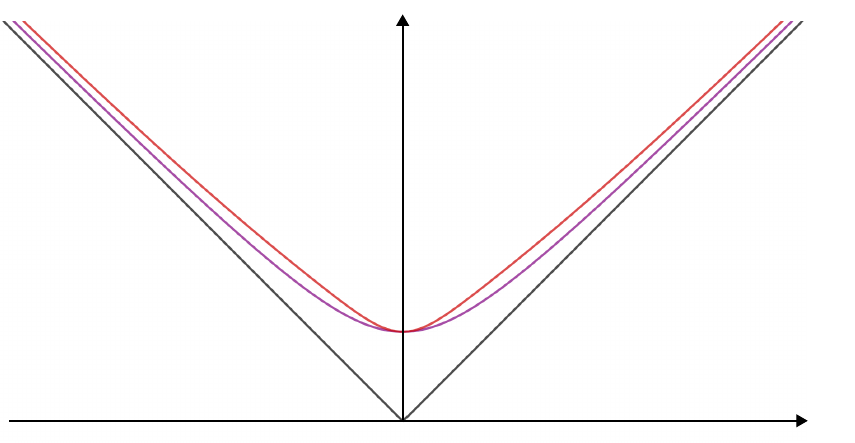}}%
    \put(0.49221623,0.47937267){\color[rgb]{0,0,0}\makebox(0,0)[lt]{\lineheight{1.25}\smash{\begin{tabular}[t]{l}$R$\end{tabular}}}}%
    \put(0.90740701,0.0478786){\color[rgb]{0,0,0}\makebox(0,0)[lt]{\lineheight{1.25}\smash{\begin{tabular}[t]{l}$V$\end{tabular}}}}%
    \put(0.47754312,0.25656086){\color[rgb]{0.85882353,0.30196078,0.30196078}\makebox(0,0)[lt]{\lineheight{1.25}\smash{\begin{tabular}[t]{l}$CMC$\end{tabular}}}}%
    \put(0.60742614,0.12993856){\color[rgb]{0.65882353,0.31372549,0.65882353}\makebox(0,0)[lt]{\lineheight{1.25}\smash{\begin{tabular}[t]{l}$t=const$\end{tabular}}}}%
    \put(0,0){\includegraphics[width=\unitlength,page=2]{8.pdf}}%
  \end{picture}%
\endgroup%

\caption{Different Cauchy surfaces seen in the $R-V$ coordinates.}
\label{fig:rv}
\end{figure}

Based on the discussion above, it may seem that making the analysis on the coordinates $(t,x)$ is inconvenient. However, the great advantage of it is that it displays $(W_{b},q_{b})$ as a time-independent non-affinely parametrized geodesic segment in the hyperbolic plane, which proves to be highly useful. Given the motivation of the stability problem we have discussed, the natural gauge to work would be the CMC gauge. This approach, however, introduces several complexities, for instance, finding and dealing with the right shift, which complicates the analysis. We plan to address this in future work.      

The rest of the paper is organized as follows. In  \cref{SMR}, we state the main results about the evolution of $R$, $q$ and $W$ to be proved. We first state the future long time existence theorem. This theorem will be proved in section \ref{sec:future_existence}. Then, in the same section \ref{SMR}, we will state a theorem summarizing what we will prove (and need) regarding the evolution of $R$. Finally we state a stability theorem for $W$ assuming $q=0$, namely polarized evolution, and then a stability theorem for non-polarized evolution.  Also in \cref{SMR}, and assuming the validity of the statements, we discuss the evolution of $a$ and the stability of the double cups as spacetimes. As $R$ satisfies the wave equation, its analysis is relatively standard. The study of the evolution of $q$ and $W$ requires however much more analysis. Section \ref{sec:pol} is dedicated to prove the stability result for smooth polarized initial data differing from the background data on a compact set, and section \ref{sec:nonpol} is dedicated to prove the stability statement for smooth non-polarized initial data differing from the background data on a compact set. The main statements for general initial data is done basically through a standard completion argument and is carried out in section \ref{sec:gen}.

\section{Statements of the main results}\label{SMR}

We introduce first a few norms and spaces that will be used to control the evolution and to formulate the main results. After that we state first the long-time existence theorem and the stability theorems for $R$ and $(W,q)$. Finally, assuming the validity of these statements, we discuss the evolution of $a$ and the stability of the double cusps as spacetimes. 

\subsection{Norms and spaces used in the article.}

We define,
\be
m_0(t) := \norm{R-R_b}_{C^0}+ \norm{R_t - {R_b}_t}_{C^0},
\ee
and for all $k\geq 1$ we define,
\be
m_k(t) := \norm{R-R_b}_{C^k} + \norm{R_t - {R_b}_t}_{C^{k-1}},
\ee
where $R, R_{b}, R_{t}$ and $R_{bt}$ are considered at time $t$. The quantity $m_{0}(t)$ measures the $C^{0}$ norm between $R$ and $R_{b}$ and between $R_{t}$ and $R_{bt}$. The quantity $m_{k}(t)$ instead measures, at time $t$, the $C^{k}$ norm between $R$ and $R_{b}$ and the $C^{k-1}$ norm between $R_{t}$ and $R_{bt}$. We also define for $k\geq 1$,
\begin{gather}
\widetilde{\mathcal{M}}_k(t) := \norm{W-W_b}_{\widetilde{H}_k} + \norm{\partial_t(W-W_b)}_{\widetilde{H}_{k-1}} + \norm{q}_{\widetilde{H}_k} + \norm{\partial_t q}_{\widetilde{H}_{k-1}},
\end{gather}
where again, the functions inside the norms are considered at time $t$. Here $\widetilde{H}_k$ is the completion of the space of smooth and compactly supported functions, $C^\infty_c(\R)$, with respect to the norm, 
\begin{equation}
\norm{f}_{\widetilde{H}_k}^2 := \sum_{i=0}^{k}\int_\R (f^{(k)}(x))^2 \cosh(2x) \ dx.
\end{equation} 
This is a weighted Sobolev space with weight $\cosh(2x)$. Lastly, let $C_0^k(\R)$ be the space of $C^k$ functions $f$ such that for every $i\leq k$, $f^{(i)}(x) \to 0$ when $\abs{x}\to +\infty$. 

Throughout the work, by initial data we refer to the functions $R(0,\cdot)$, $R_t(0,\cdot)$, $W(0,\cdot)$, $W_t(0,\cdot)$, $q(0,\cdot)$, $q_t(0,\cdot)$.

\subsection{Statements on the long-time existence and the stability of $R$ and $(W,q)$.} 

The long-time existence statement that we will prove is the following,
\begin{theorem}[Future long-time existence]\label{th:existencia_todo_tiempo} \mbox{}
Consider initial data to the \cref{eq:sysR,eq:sysW,eq:sysq}, such that $R(0,\cdot)$, $W(0,\cdot)$, $q(0,\cdot)$ $\in C^2$ and $R_t(0,\cdot),W_t(0,\cdot),q_t(0,\cdot)$ $\in C^1$. Suppose additionally that $m_0(0)<2R_0/3$. Then there is a unique $C^2$ solution defined for all $t \geq 0$.
\end{theorem}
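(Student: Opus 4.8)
The plan is to combine local well-posedness for the semilinear wave system \cref{eq:sysW,eq:sysq} with the a priori bounds already obtained in this section, using finite speed of propagation to reduce general initial data to the compactly supported perturbations for which those bounds were proved. First I would dispose of the $R$-slot: \cref{eq:sysR} is the linear wave equation $R_{tt}=R_{xx}$, so $R$ is uniquely determined for all $t$ by $R(0,\cdot)\in C^2$ and $R_t(0,\cdot)\in C^1$ through d'Alembert's formula and is $C^2$; from $m_0(0)<2R_0/3$ one gets $R(t,x)\ge R_b(t,x)-(t+1)m_0(0)>0$ for $t\ge 0$ together with $R\sim R_b$ (\cref{lemma:coeff_estimates}), so the coefficients $R_t/R$ and $R_x/R$ of \cref{eq:sysW,eq:sysq} are bounded on every slab $[0,T_0]\times\R$. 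From here on $R$ is treated as a fixed, known coefficient.

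Next I would set up local existence together with a continuation criterion for $\chi=(W,q)$. With $R$ frozen, \cref{eq:sysW,eq:sysq} is a semilinear system with principal part $\partial_t^2-\partial_x^2$ whose nonlinearity is a polynomial in $(\partial_t W,\partial_x W,\partial_t q,\partial_x q)$ with coefficients that are bounded functions of $(t,x)$ and of $e^{-4W}$. Standard local theory (for instance a contraction in $C^1$ using the d'Alembert representation) produces a unique $C^2$ solution on a maximal interval $[0,T)$, and the natural continuation criterion is: if $T<\infty$ then $\sup_{[0,T)}\big(\norm{W}_{C^0}+\norm{\partial_t\chi}_{C^0}+\norm{\partial_x\chi}_{C^0}\big)=+\infty$. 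The point is that once that supremum is finite the factor $e^{-4W}$ is bounded on $[0,T)$, so differentiating \cref{eq:sysW,eq:sysq} once turns them into linear wave equations for the first derivatives of $\chi$ whose sources are bounded coefficients times quantities quadratic in the first derivatives and linear in the second derivatives; a Gr\"onwall-type energy estimate then bounds $\norm{\chi}_{C^2}$ on $[0,T)$, and the local theory re-applied near $T$ extends the solution, contradicting maximality.

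For compactly supported perturbations the argument then closes directly. If the initial data differ from the background only on a compact set, finite speed of propagation makes $(W-W_b,q)$ of locally $x$-compact support on $[0,T)$, so the hypotheses of the $C^0$ estimate and of \cref{prop:controlC1} are met on the interval of existence; they give $\norm{W-W_b}_{C^0}\le C$ and $\norm{\partial_t\chi}_{C^0}+\norm{\partial_x\chi}_{C^0}\le C e^{\lambda t}$ there, and since $W_b$ is bounded on $\R$, all three quantities in the continuation criterion are finite on every bounded subinterval, forcing $T=+\infty$. Finally, for general $C^2/C^1$ data I would localize: fix $T_0>0$ and $x_0\in\R$; since the conformal factor in \cref{eq:metricform} does not alter the characteristics, the backward null cone of $(T_0,x_0)$ meets $\{t=0\}$ in $[x_0-T_0,x_0+T_0]$, so replacing the given data by data that agrees with it on $[x_0-T_0-1,x_0+T_0+1]$ and with the background outside $[x_0-T_0-2,x_0+T_0+2]$ yields a compactly supported perturbation; by the previous step it has a global $C^2$ solution, and by uniqueness together with finite speed of propagation this solution coincides with the original one throughout the truncated backward cone of $(T_0,x_0)$, so the original solution exists and is $C^2$ near $(T_0,x_0)$. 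As $(T_0,x_0)$ was arbitrary, the original solution extends to a (necessarily unique) $C^2$ solution on $[0,\infty)\times\R$.

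The step I expect to be the main obstacle is the continuation criterion in the second paragraph: the a priori control furnished by the previous sections is only of $C^1$ type (plus the $C^0$ bound on $W$), so one has to check carefully that this level of control really does propagate $C^2$ regularity, which amounts to differentiating the system once and running an energy estimate with the ``quasilinear-looking'' coefficient $e^{-4W}$ kept under control. The linear analysis of $R$, the reduction by domains of dependence, and the $C^1$-to-$C^2$ bootstrap are otherwise routine, given \cref{lemma:coeff_estimates}, \cref{prop:controlC1}, and the $C^0$ estimate established above.
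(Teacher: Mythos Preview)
Your proposal is correct and follows essentially the same route as the paper: reduce to compactly supported perturbations by finite speed of propagation, treat $R$ separately via d'Alembert, and then combine local well-posedness for the semilinear system in $(W,q)$ with the $C^0$ estimate and \cref{prop:controlC1} to rule out finite-time blowup. The only difference is packaging: where you sketch the local theory and the $C^1\Rightarrow C^2$ continuation step directly (differentiate once and run Gr\"onwall), the paper outsources both to Theorems~18 and~19 of \cite{ringstrom2005non}, after recasting the system in terms of $\Delta W=W-W_b$ and $\Delta q=q$ so that the source $r(t,x)$ is of locally $x$-compact support. Your worry about the continuation criterion is exactly the content of Theorem~19 there, so the ``main obstacle'' you flag is indeed handled, just by citation rather than by the bootstrap you outline.
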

Observe that besides the condition on $R$, no other condition is imposed on the data. This theorem is proved independently from the stability results stated below. It is proved in section \ref{sec:future_existence}.

The next statement summarizes the main results that we will prove about the evolution of $R$. The proof of this theorem follows directly from \cref{prop:R1,lemma:coeff_estimates,lemma:coef_est_2}.

Let $\alpha =(m,n)$ denote a multi-index, and let $\abs{\alpha} := m+n$. Recall that, 
\begin{equation}
R_b = R_0 e^{2t}\cosh(2x).
\end{equation}
\begin{theorem}\label{thm:RR}
For every $C^2$ solution $R$ to the \cref{eq:sysR}, if $m_0(0)<2R_0/3$, then $R(t,x)>0$ when $t\geq 0$. Moreover,
\begin{equation}\norm{R-R_b}_\infty (t) \leq (t+1)m_0(0) \ \ \text{for } t \geq 0,
\end{equation}
and for all multi-index $\alpha \neq (0,0)$, there is a constant $C$ such that, for $t\geq 0$, 
\begin{equation}
\norm{\partial^\alpha (R-R_b)}_{\infty}(t) \leq m_{\abs{\alpha}}(0),
\end{equation} 
for $C^{\abs{\alpha}}$ solutions, and
\begin{equation}
\abs{\partial^\alpha \left(\frac{R_t}{R}-\frac{{R_b}_t}{{R_b}}\right)}(t,x),\abs{\partial^\alpha \left(\frac{R_x}{R}-\frac{{R_b}_x}{R_b}\right)}(t,x) \leq C \frac{t+1}{e^{2t}\cosh(2x)}m_{\abs{\alpha}+1}(0),
\end{equation} 
for $C^{\abs{\alpha} +1}$ solutions.
\end{theorem}
The following statement is the main result we will prove regarding the evolution of $W$ for polarized perturbations, i.e., with $q=0$. It demonstrates, in particular, that within the class of polarized solutions, the polarized double cusp acts as a global attractor and that every perturbation converges to the double cusp exponentially fast. The proof of this theorem proceeds by first establishing the result for smooth initial data that differ from that of the background only on a compact set, followed by a completion argument. \Cref{sec:pol} focuses on proving the result for smooth, compactly supported perturbations, and is achieved by \Cref{cor:polarized_compact_sup}. The completion argument is carried out in \Cref{sec:gen}.


\begin{theorem}[Stability for polarized perturbations]\label{th:polarized_perturbations}
Let $k\geq 3$ and assume $m_0(0)<2R_0/3$. Then, for any initial data to the \cref{eq:sysR,eq:sysW,eq:sysq}, satisfying $(R-R_b)(0,\cdot) \in C_0^k$, $\partial_t(R-R_b)(0,\cdot) \in C_0^{k-1}$, $(W-W_b)(0,\cdot) \in \widetilde{H}^k$, $\partial_t(W-W_b)(0,\cdot) \in \widetilde{H}^{k-1}$, and $q(0,\cdot) = q_t(0,\cdot) = 0$, there is a unique $C^{k-1}$ solution to the \cref{eq:sysR,eq:sysW,eq:sysq}, defined for all time $t \geq 0$, with $q$ identically zero, and
\begin{equation}
\widetilde{\mathcal{M}}_k(t) \leq Ce^{-t}(t+1)\left(\widetilde{\mathcal{M}}_k(0) + m_k(0)\right).\label{eq:controlpW}
\end{equation}
Moreover, the constant $C$ depends only on an upper bound on $m_k(0)$ and $k$.
\end{theorem}

In this statement, the phrase \textit{Moreover, the constant $C$ depends only on an upper bound on $m_k(0)$ and $k$}, means that, for any solution having $m_k(0)<B$, for some constant $B$, we can use the same constant $C$. This type of wording will be particularly useful when dealing with a sequence of solutions in \cref{sec:gen}. We will use similar expressions throughout the work.

For non-polarized perturbations ($q\neq 0$) a similar estimate is obtained but only for $\widetilde{\mathcal{M}}_{3}(t)$. However, this time we require the perturbation to be small enough at $t=0$. The following is our main result. As above, we first establish the result for data that differ from that of the background only on a compact set. This is done in \Cref{cor:nonpol_comp_2} of \cref{sec:nonpol}. The completion argument is carried out in \cref{sec:gen}.


\begin{theorem}[Stability for non-polarized perturbations]\label{th:nonpol_perturbations}
Assume $m_0(0) < 2R_0/3$. Then, for any initial data to the \cref{eq:sysR,eq:sysW,eq:sysq}, satisfying $(R-R_b)(0,\cdot) \in C_0^3$, $\partial_t(R-R_b)(0,\cdot) \in C_0^{2}$, $(W-W_b)(0,\cdot) \in \widetilde{H}^3$, $\partial_t(W-W_b)(0,\cdot) \in \widetilde{H}^{2}$, $q(0,\cdot) \in \widetilde{H}^3$, and $q_t(0,\cdot)$ in $\widetilde{H}^2$, there is a unique $C^2$ solution defined for all time $t \geq 0$. Furthermore, there exists $\delta>0$, independent of the solution, such that if $m_3(0)<\delta$, $\widetilde{\mathcal{M}}_3(0)<\delta$, then
\begin{equation}
\widetilde{\mathcal{M}}_3(t)\leq Ce^{-t}(t+1)(\widetilde{\mathcal{M}}_3(0) + m_3(0)). \label{eq:controlWq}
\end{equation}
Moreover, the constant $C$ depends only on an upper bound on $\delta$.
\end{theorem}
Even though the stability statements are written for the polarized double cusp, by the observation made in \Cref{fig:asymptotic_conv}, it is easy to see that they hold for every double cusps.

\subsection{Statements on the evolution of $a$ and the stability of the double cusps.} 

A basic consequence of the results presented in the previous section is that if the initial data for $(R,W,q)$ satisfies the hypotheses of \Cref{th:existencia_todo_tiempo}, then $a$ is defined for all $t\geq 0$. This happens because the \cref{eq:wave_a} is a wave equation with a source defined for all $t\geq 0$. Moreover, if the constraint equations are satisfied at $t=0$, then, they are satisfied for every $t\geq 0$. This directly implies the following corollary. 

\begin{corollary}\label{cor:11}
Suppose that $R,R_t,W,W_t,q$ and $q_t$, at $t=0$, satisfy the hypotheses of \Cref{th:existencia_todo_tiempo}, and also that $a(0,\cdot) \in C^2$ and $a_t(0,\cdot) \in C^1$. Assume that the \cref{eq:a_fuerte,eq:a_debil} are satisfied at $t=0$. Then, the solution to the \cref{eq:sysR,eq:sysW,eq:sysq,eq:wave_a} is defined for all $t\geq 0$. Furthermore, the metric, \cref{eq:metricform}, on $[0,+\infty)\times \R \times \T^2$, gives a Cauchy development of the data.
\end{corollary}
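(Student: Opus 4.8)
The plan is to produce the solution in the cascade order $R \rightsquigarrow (W,q) \rightsquigarrow a$ dictated by the decoupling structure of the equations, and then to run a constraint-propagation argument for the last factor. Since $m_0(0)<2R_0/3$, \Cref{th:existencia_todo_tiempo} already gives a unique $C^2$ solution $(R,W,q)$ of \cref{eq:sysR,eq:sysW,eq:sysq} on $[0,+\infty)\times\R$, and the positivity statement for $R$ (the first theorem of \Cref{SMR}, valid under exactly this hypothesis) guarantees $R>0$ there, so the coefficient $\tfrac{R_x^2-R_t^2}{4R^2}$ in \cref{eq:wave_a} is well defined and continuous. Consequently the source of \cref{eq:wave_a} is a continuous function on $[0,+\infty)\times\R$, and \cref{eq:wave_a}, being a linear inhomogeneous $1{+}1$ wave equation, has a unique solution $a$ with the prescribed Cauchy data $a(0,\cdot)\in C^2$, $a_t(0,\cdot)\in C^1$, defined for all $t\geq 0$; one constructs it region by region via d'Alembert/Duhamel, using that the null cones of $-dt^2+dx^2$ have finite slope so that $a(t,x)$ depends only on data in a bounded characteristic triangle. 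This already yields the first assertion: the full system \cref{eq:sysR,eq:sysW,eq:sysq,eq:wave_a} has a (unique) solution on $[0,+\infty)\times\R$.

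It remains to propagate the constraints. Write $\Phi$ and $\Psi$ for the left-hand sides of \cref{eq:a_fuerte} and \cref{eq:a_debil}, regarded as functions of $(t,x)$ along the solution just built. The core computation is to differentiate $\Phi$ and $\Psi$ in $t$ and substitute all four evolution equations \cref{eq:sysR,eq:sysW,eq:sysq,eq:wave_a} — in particular using \cref{eq:wave_a} to eliminate the $a_{tt}$ produced by differentiating the $a_t \tfrac{R_t}{R}$ term, and using \cref{eq:sysR} to trade $R_{xx}$ for $R_{tt}$. After the cancellations, $(\Phi,\Psi)$ turns out to satisfy a first-order, linear, \emph{homogeneous} symmetric-hyperbolic system of the schematic form
\begin{equation}
\partial_t\Phi - \partial_x\Psi = \alpha\,\Phi + \beta\,\Psi, \qquad \partial_t\Psi - \partial_x\Phi = \gamma\,\Phi + \delta\,\Psi,
\end{equation}
with coefficients $\alpha,\beta,\gamma,\delta$ built algebraically from $R,W,q$ and their first derivatives, hence locally bounded. (This is the PDE incarnation of the fact that the vacuum constraints are preserved by the evolution equations, via the contracted Bianchi identity.) By hypothesis $\Phi(0,\cdot)=\Psi(0,\cdot)=0$, and the principal part above is the standard wave system with propagation speed $1$; a localized energy estimate on each backward characteristic triangle with apex $(T,X)$, $T>0$, combined with Gr\"onwall, then forces $\Phi\equiv\Psi\equiv 0$ on all of $[0,+\infty)\times\R$. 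No decay of $\Phi,\Psi$ as $|x|\to\infty$ is needed, precisely because of finite speed of propagation.

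With \cref{eq:sysR,eq:sysW,eq:sysq,eq:wave_a,eq:a_fuerte,eq:a_debil} now all satisfied, the metric \cref{eq:metricform} on $[0,+\infty)\times\R\times\T^2$ is, by construction, a vacuum solution of the Einstein equations inducing the prescribed data on $\{t=0\}$. That $\{t=0\}$ is a Cauchy surface for this metric follows as in \Cref{DCGP}: the conformal factor $e^{2a}$ leaves the null cones equal to those of $-dt^2+dx^2$; the $\T^2$-block has determinant $R^2>0$ and is therefore positive definite; so every causal vector is ``steeper than $45^\circ$'' in the $(t,x)$-plane and every inextendible causal curve meets $\{t=0\}$ exactly once, just as for the background. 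Hence the spacetime is globally hyperbolic with $\{t=0\}$ a Cauchy surface, i.e.\ it is a Cauchy development of the data, proving \Cref{cor:11}. The step I expect to be the real work is the constraint-propagation computation: organizing the $t$-derivatives of the rather long expressions $\Phi$ and $\Psi$ so that every term either cancels or regroups into a multiple of $\Phi$ or $\Psi$; the remaining ingredients (linear wave theory for $a$, the energy estimate, and the causal-structure remark) are standard.
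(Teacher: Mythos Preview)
Your proposal is correct and follows exactly the route the paper indicates: the paper's ``proof'' is the two-sentence remark preceding the corollary --- \cref{eq:wave_a} is a linear wave equation with globally defined source, so $a$ exists for all $t\geq 0$, and the constraints, once satisfied at $t=0$, propagate --- and you have simply fleshed out both of these steps (d'Alembert/Duhamel for the first, a homogeneous symmetric-hyperbolic system for $(\Phi,\Psi)$ plus localized energy/Gr\"onwall for the second) and added the causal-structure verification that $\{t=0\}$ is Cauchy. The one place to be careful is regularity: since $R,W,q$ are $C^2$, the source in \cref{eq:wave_a} is $C^1$, not merely continuous, which is what you actually need for Duhamel to return a $C^2$ solution $a$; and your constraint-propagation argument uses one more derivative of $(\Phi,\Psi)$ than is strictly available from a $C^2$ metric, so in practice one argues first for smooth compactly supported perturbations (where the computation is unproblematic) and then passes to the limit, or else interprets the system weakly.
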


Regarding the stability of $a$, note that the \cref{eq:wave_a} implies
\begin{equation}
(a-a_{b})_{tt}-(a-a_{b})_{xx}= G-G_b + F(W,W_t,W_x,q_x,q_t)-F(W_{b},{W_{b}}_t,{W_b}_x, {q_{b}}_x,{q_b}_t),
\end{equation}
where $G = (R_t^2-R_x^2)/4R^2$. It can be seen that, due to \Cref{thm:RR}, $G-G_b$ is controlled by $\frac{C (t+1)}{e^{2t}\cosh(2x)}m_1(0)$ (see \cref{eq:G_est}, \Cref{prop:R1}). The other part of the source is easily  controlled by the \cref{eq:controlWq}. Using this and D'Alembert, we deduce that the contribution of the source to $\|a(t,\cdot)-a_{b}(t,\cdot)\|_{C^{0}}$ is controlled by $\widetilde{\mathcal{M}}_{2}(0)+m_{2}(0)$. The contribution of the homogeneous solution is controlled by the initial data norm $\|a(0,\cdot)-a_{b}(0,\cdot)\|_{C^{0}}+\|a_{t}(0,\cdot)-{a_t}_{b}(0,\cdot)\|_{L^{1}}$.  This immediately leads to the following results.

\begin{theorem}\label{thm:astab}
In the hypotheses of \Cref{th:nonpol_perturbations}, for each $\epsilon>0$ we can find $\delta>0$ such that
\begin{enumerate}
\item If the initial data for $(R,W,q,a)$ satisfies $\widetilde{\mathcal{M}}_{3}(0)+m_{3}(0)\leq \delta$ and $\|a(0,\cdot)-a_{b}(0,\cdot)\|_{C^{0}}+\|a_{t}(0,\cdot)-{a_{b}}_t(0,\cdot)\|_{L^{1}}\leq\delta$, then $\|a(t,\cdot)-a_{b}(t,\cdot)\|_{C^{0}}\leq \epsilon \ \forall t \geq 0$.
\item If the initial data for $(R,W,q,a)$ satisfies $\widetilde{\mathcal{M}}_{3}(0)+m_{3}(0)\leq \delta$ and $\|a_x(0,\cdot)-{a_{b}}_x(0,\cdot)\|_{C^{0}}+\|a_t(0,\cdot)-{a_{b}}_t(0,\cdot)\|_{C^0}\leq\delta$, then $\|a_x(t,\cdot)-{a_{b}}_x(t,\cdot)\|_{C^{0}} + \|a_t(t,\cdot)-{a_{b}}_t(t,\cdot)\|_{C^{0}} \leq \epsilon \ \forall t \geq 0$.
\end{enumerate}
\end{theorem}

It can be easily seen that the spacetime perturbations satisfying the second point are future geodesically complete, provided that $\delta$ is small enough. The natural question is whether there is a non-trivial perturbation on these spaces subjected to the constraint equations. This indeed is the case, for instance, if we require stricter norms for $(R,W,q)$ at $t=0$. By doing this, $(a - a_b)(0, x)$ and $(a - a_b)_t(0, x)$ can be solved for in the constraint \cref{eq:a_fuerte,eq:a_debil}, and naturally belong to the spaces above initially, and then for each \( t \geq 0 \). An example of such norms could be $\norm{f}_{\widetilde{H}_{p,k}}^2 := \sum_{i=0}^{k}\int_\R (f^{(k)}(x))^2 \cosh^{p}(2x) \ dx$ for $W$ and $q$, and ${m_{l,k}}(f) := m_{k}(\cosh^{l}(2x)f(x))$ for $R$.
With these norms, using $p=2$ instead of $p=1$, and $l=1$ instead of $l=0$, one can see that small perturbations in this new sense imply that $a_t(0,x)$ and $a_x(0,x)$ can be solved out in the \cref{eq:a_fuerte,eq:a_debil}. Hence, by \Cref{thm:astab}, $\|a(t,\cdot)-a_{b}(t,\cdot)\|_{C^{1}}$ is finite for each $t\geq 0$, and arbitrarily small by reducing the values of $m_{l,1}(R-R_b)(0)$ and $\widetilde{\mathcal{M}}_{2,3}(0)$, where
\begin{equation}
\widetilde{\mathcal{M}}_{p,k}(t) := \norm{W-W_b}_{\widetilde{H}_{p,k}} + \norm{\partial_t(W-W_b)}_{\widetilde{H}_{p,k-1}} + \norm{q}_{\widetilde{H}_{p,k}} + \norm{\partial_t q}_{\widetilde{H}_{p,k-1}}.
\end{equation}


\section{Proof of the stability of compactly supported polarized perturbations}\label{sec:pol}

In this section, we shall address smooth compactly supported perturbations with $q=0$, namely, a smooth solution $(R, W)$ to the \cref{eq:sysR,eq:sysW}, with $q=0$, whose initial data differs from that of the background only on a compact set. The goal is proving \Cref{cor:polarized_compact_sup}. Many of the computations in this section will be used again for the non-polarized case. 

Throughout the work, depending on the kind of computations, we will use $\partial_x f$ or $f_x$. Also $\partial^{\alpha}f$ with $\alpha$ a multi-index.  
We say that a function $f(t,x)$ is  \textit{of locally x-compact support} if for any interval $[T_1,T_2]$ there is a compact subset $K\subset \R$ such that $f(t,x) = 0$ if $t\in [T_1,T_2]$ and $x \notin K$. Recall the notation $m_k$. The following basic proposition will be used later.
\begin{proposition}\label{prop:R1}
If $R$ is a $C^2$ solution such that $m_0(0)<\frac{2}{3}R_0$, then the solution is defined for every $t\geq 0$, and $R>0$ for $t\geq 0$. Furthermore, if $(R,W)$ is a smooth solution whose initial data differs only on a compact set from that of the background solution, then $(R-R_b, W-W_b)$ is of locally x-compact support.
\end{proposition}
\begin{proof}
$R$ is clearly defined for every $t$. The use of D'Alembert's formula gives $R \geq R_b - m_0(0)(t+1), t \geq 0$. With this, it can be seen that both claims about $R$ are true. Fixed $R$, the equation for $W$ is linear, and then is defined for all time $t \geq 0$. For the last assertion about $W$, use a finite speed propagation argument.
\end{proof}

\subsection{A change of variables and the basic energy estimate.}

To prove the asymptotic stability, we need some useful energies. Consider the change of variable $z = R^{1/2}(W-W_b)$. In this new variable, the equation for $W$ becomes
\begin{equation}
z_{tt} - z_{xx} + z G = g  \ \ \ \textrm{with } G = \frac{R_t^2 - R_x^2}{4R^2} \textrm{ and } g = R^{1/2}\left(\frac{R_x}{R}-\frac{{R_b}_x}{R_b}\right){W_b}_x.
\label{eq:z}
\end{equation}
Now the background corresponds to $z_b=0$, and $z$ is of locally x-compact support if and only if $W-W_b$ is of locally $x-$compact support. In this case, the energy 
$$E(t):=\frac{1}{2}\int_\R z_t^2 + z_x^2 + z^2G_b \ dx,$$ is well-defined, where $G_b = \tfrac{1}{\cosh^2(2x)}$. Since $G$ is well approximated by $G_b$ (as we shown below), this energy is natural, as is the Lagrangian energy when $R = R_b$. 

We need some estimates involving $R-R_b$ so that we can control $E$. These properties are given in the following lemma.
\begin{proposition}[Coefficients estimates]\label{lemma:coeff_estimates}
Consider $R$, a $C^2$ solution. For all $t\geq 0$, we have the following estimates:
\begin{gather}
\norm{R-R_b}_\infty (t) \leq (t+1)m_0(0), \label{eq:R_est} \\
\norm{R_x-{R_b}_x}_\infty (t), \norm{R_t - {R_b}_t}_\infty(t) \leq  m_1(0). \label{eq:RxRt_est}
\end{gather}
Furthermore, as we are assuming $m_0(0) \leq \frac{2}{3}R_0$ we also have that $R \sim R_b$, i.e, there is a constant $d>0$ such that
\begin{equation}
\frac{1}{d} \leq \norm[\bigg]{\frac{{R_b}}{R}}_\infty(t) \leq d,
\label{eq:R_equiv}
\end{equation}
and this, in turn, implies the existence of a numeric constant $C>1$ such that
\begin{gather}
\abs{\frac{R_t}{R}-\frac{{R_b}_t}{R_b}} (t,x) , \abs{\frac{R_x}{R}-\frac{{R_b}_x}{R_b}}(t,x) \leq \frac{C(t+1)}{e^{2t}\cosh(2x)}m_1(0), \label{eq:RtbyR_RxbyR_est}\\
\abs{G_b - G}(t,x) \leq \frac{C (t+1)}{e^{2t}\cosh(2x)}m_1(0), \label{eq:G_est}\\
\abs{g}(t,x) \leq \frac{C (t+1)}{e^t \cosh^{3/2}(2x)}m_1(0). \label{eq:g_est}
\end{gather}
\end{proposition}
\begin{proof}
Use D'Alembert to differentiate the first estimates. The other follows from direct computation using these.
\end{proof}
\begin{lemma}[The basic energy inequality]\label{prop:basic_energy_inequality}
Let $(R,z)$ be a smooth solution to the \cref{eq:sysR,eq:z}, whose initial data differs only on a compact set from that of the background solution, and such that $m_0(0)<2R_0/3$. Then there is a constant $C$ such that, 
\begin{equation}
E^{1/2}(t) \leq C(E^{1/2}(0) + m_1(0)), \ \text{with }\ t\geq 0.
\label{eq:basic_energy_inequality}
\end{equation}
Furthermore, the constant $C$ depends only on a bound on $m_1(0)$.
\end{lemma}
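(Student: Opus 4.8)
The plan is to differentiate $E(t)$ along the flow and show that $\frac{d}{dt}E(t)$ is controlled by a term linear in $E(t)^{1/2}$ coming from the source $g$, plus an error term coming from the discrepancy $G-G_b$; then a Grönwall-type argument over $t \geq 0$ closes the estimate, provided the coefficients decay fast enough in $t$ — which is exactly what Proposition \ref{lemma:coeff_estimates} provides. Concretely, I would first compute
\[
\frac{d}{dt}E(t) = \int_\R z_t z_{tt} + z_x z_{xt} + z z_t G_b \, dx,
\]
integrate the middle term by parts in $x$ (legitimate since $z$ is of locally $x$-compact support), and substitute the equation $z_{tt} = z_{xx} - zG + g$ to obtain
\[
\frac{d}{dt}E(t) = \int_\R z z_t (G_b - G)\, dx + \int_\R z_t\, g \, dx.
\]
No stray term survives from the $z_{xx}$ and $z_x z_{xt}$ pieces because they cancel after integration by parts; the $G_b$ term in $E$ is precisely what is needed so that only the \emph{difference} $G_b - G$ appears. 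This identity is the heart of the argument.

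Next I would estimate the two terms on the right. For the source term, Cauchy--Schwarz gives $|\int z_t g\,dx| \leq \norm{z_t}_{L^2}\,\norm{g}_{L^2} \leq \sqrt{2}\,E(t)^{1/2}\,\norm{g}_{L^2}$, and by \cref{eq:g_est}, $\norm{g(t,\cdot)}_{L^2}^2 \lesssim \frac{(t+1)^2}{e^{2t}} m_1(0)^2 \int_\R \cosh^{-3}(2x)\,dx$, so $\norm{g(t,\cdot)}_{L^2} \leq C\frac{t+1}{e^{t}} m_1(0) =: \phi(t)$, an integrable function of $t$ on $[0,\infty)$. For the error term I need to bound $|\int z z_t (G_b - G)\,dx|$: here I would write $z^2 G_b = z^2 \cosh^{-2}(2x)$, so $\int z^2 G_b \, dx \leq 2E(t)$, hence $\norm{z \cosh^{-1}(2x)}_{L^2} \leq \sqrt{2}E(t)^{1/2}$; then, since by \cref{eq:G_est} we have $|G_b - G| \leq C\frac{t+1}{e^{2t}\cosh(2x)} m_1(0) = \big(C\frac{t+1}{e^{2t}}m_1(0)\big)\cosh^{-1}(2x) \leq \big(C\frac{t+1}{e^{2t}}m_1(0)\big)\cdot \cosh^{-1}(2x)$, I can pair one factor $\cosh^{-1}(2x)$ with $z$ and absorb the remaining bounded factor, giving $|\int z z_t(G_b-G)\,dx| \leq C\frac{t+1}{e^{2t}}m_1(0)\cdot \norm{z\cosh^{-1}(2x)}_{L^2}\norm{z_t}_{L^2} \leq C\frac{t+1}{e^{2t}}m_1(0)\cdot 2E(t) =: \psi(t) E(t)$ with $\psi$ integrable on $[0,\infty)$.

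Combining, $\frac{d}{dt}E(t) \leq \psi(t) E(t) + \sqrt{2}\,\phi(t) E(t)^{1/2}$, equivalently $\frac{d}{dt}E(t)^{1/2} \leq \frac12\psi(t)E(t)^{1/2} + \frac{1}{\sqrt 2}\phi(t)$ wherever $E(t)>0$ (and the inequality is trivial where $E=0$). Grönwall's inequality then yields
\[
E(t)^{1/2} \leq e^{\frac12\int_0^t \psi} \Big( E(0)^{1/2} + \tfrac{1}{\sqrt2}\int_0^t \phi \Big) \leq C\big(E(0)^{1/2} + m_1(0)\big),
\]
since $\int_0^\infty \psi < \infty$ and $\int_0^\infty \phi \leq C m_1(0)$; note $\int_0^\infty \psi$ depends only on an upper bound for $m_1(0)$ (it is in fact proportional to it), so the constant $C$ depends only on such a bound, as claimed. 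The main obstacle is purely bookkeeping: making sure the weights $\cosh(2x)$ and $\cosh^{-1}(2x)$ are distributed correctly between the factors of $z$, $z_t$ and the coefficient bounds so that everything closes with only $E(t)^{1/2}$ and the integrable-in-$t$ prefactors — the decay estimates of \cref{lemma:coeff_estimates} are tailored precisely for this, so no genuinely hard analytic step arises once the energy identity is in hand.
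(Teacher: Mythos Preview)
Your proof is correct and follows essentially the same approach as the paper: derive the energy identity $\dot E = \int z_t z(G_b-G)\,dx + \int z_t g\,dx$ via integration by parts, bound the two terms using Cauchy--Schwarz together with the coefficient estimates \eqref{eq:G_est} and \eqref{eq:g_est}, and close with a Gr\"onwall argument. The only cosmetic difference is that the paper first uses Young's inequality to turn the mixed term $C(t+1)e^{-t}m_1(0)E^{1/2}$ into $C(t+1)e^{-t}E + C(t+1)e^{-t}m_1^2(0)$ before integrating, whereas you apply Gr\"onwall directly to $E^{1/2}$; both routes are equivalent.
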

\bibliographystyle{plain}
\begin{proof}
The function $z$ is of locally x-compact support as $W-W_b$ is. For this reason, we can differentiate under the integral and use integration by parts. Then, we use the equation to obtain

$$
\begin{aligned}
\dot{E} &= \int_\R z_t z_{tt} + z_x z_{xt} + zz_tG_b \ dx = \int_\R z_t(z_{tt}-z_{xx} + zG_b) \ dx  \\
& = \int_\R z_t z(G_b-G)\ dx + \int_\R z_t g \ dx \\ 
&\leq \underbrace{\sqrt{\int_\R z_t^2 \ dx}\sqrt{\int_\R z^2(G_b-G)^2 \ dx}}_{\text{First term}} + \underbrace{\sqrt{\int_\R z_t^2 \ dx}\sqrt{\int_\R g^2 \ dx}}_{\text{Second term}}. \\
\end{aligned}
$$
In the first term, the first integral is bounded by $\sqrt{2}E^{1/2}$. For the second integral, use the estimate (\ref{eq:G_est}) to find out that this term is less or equal to
$$C(t+1)m_1(0) e^{-2t}E^{1/2} \sqrt{\int_\R \frac{z^2}{\cosh^2(2x)} \ dx}, $$ 
but the integrand in the last integral is just $4z^2G_b$, so in the end, our first term is less or equal to
$$C(t+1)e^{-2t}m_1(0)E,$$ 
where we have adjusted the constant $C$. Let us control the second term. This term is a product of two integrals. The first one is less or equal to $E^{1/2}$. For the second integral, the use of the estimate (\ref{eq:g_est}) yields
$$\int_\R g^2 \ dx \leq C(t+1)e^{-t}m_1(0). $$ 
Using all these observations, we get
\begin{equation}
\begin{aligned}
\dot{E} &\leq C(t+1)e^{-2t}m_1(0)E + C(t+1)e^{-t}m_1(0) E^{1/2} \\
& \leq C(t+1)e^{-2t}m_1(0)E + C(t+1)e^{-t}E  + C(t+1)e^{-t}m_1^2(0) \\
& \leq C(t+1)e^{-t}E + C(t+1)e^{-t}m_1^2(0),
\end{aligned} 
\label{eq:casi_basic_energy_inequality}
\end{equation}
where in the last inequality, the constant depends on $m_1(0)$. This inequality implies the thesis.
\end{proof}
Note that since $R \sim R_b = R_0e^{2t}\cosh(2x)$, this implies exponential decay in our original variables plus decay as $x$ goes to infinity. 

\subsection{Higher order energy estimates.} Let $\alpha$ be a multi-index $\alpha = (m,n)$. The first letter, $m$, will refer to time derivatives wereas $n$ will refer to spatial derivatives.
Let us denote $$E^\alpha(t):= \frac{1}{2}\int_\R (\partial^\alpha z)_t^2 + (\partial^\alpha z)_x^2 + (\partial^\alpha z)^2G_b \ dx.$$ 
The most important of these energies are the ones with $\alpha = (0,m)$ because they are related to $H^k$ norms of $z$ and $\partial_t z$. This fact is important since, at $t=0$, they involve only the initial data. One could be tempted to do an argument similar to the one made in the derivation of the \cref{eq:basic_energy_inequality}, but using $E^{(0,m)}$. If we do this, we will discover that the growth of $E^{(0,m)}$ is bounded by a polynomial of degree $m$. This result is not bad, but we found another, longer way to obtain better estimates. First, we derive estimates for $E^{(n,0)}$, and then we pass our estimates to $E^{(0,m)}$ using the equations satisfied by $z$ and its derivatives. Now, more coefficient estimates are needed.  

\begin{proposition}\label{lemma:coef_est_2}
Suppose $m_0(0)<2R_0/3$. \vspace{0.2 cm} \newline 
Assuming $R$ is a $C^{\abs{\alpha}}$ solution:
\begin{enumerate}[a), wide, labelindent=0 cm]
\item For all multi-index $\alpha \neq (0,0)$ we have
\begin{equation}
\norm{\partial^\alpha (R-R_b)}_{\infty}(t) \leq m_{\alpha}(0) \ \ t \geq 0.
\label{eq:Ralpha_est}
\end{equation}
\item For every multi-index $\alpha$, $\partial^{\alpha}R/R$ and $\partial^{\alpha}R^{1/2}/R^{1/2}$ are bounded to the future. Moreover, the bound depends only on a bound on $m_{\abs{\alpha}}(0)$. \vspace{0.1 cm} \newline
Assuming $R$ is a $C^{\abs{\alpha}+1}$ solution:
\item For every multi-index $\alpha$, $\partial^{\alpha}R_t/R$, $\partial^{\alpha}R_x/R$, $\partial^\alpha (R_t/R)$ and $\partial^{\alpha}(R_x/R)$ are bounded to the future. Moreover, the bound depends only on a bound on $m_{\abs{\alpha}+1}(0)$. \vspace{0.1 cm}
\item For all multi-index $\alpha$ there is a constant $C$ such that, $\forall t\geq 0$ 
\begin{equation} \abs{\partial^\alpha \left(\frac{R_t}{R}-\frac{{R_b}_t}{{R_b}}\right)}(t,x),\abs{\partial^\alpha \left(\frac{R_x}{R}-\frac{{R_b}_x}{R_b}\right)}(t,x) \leq C \frac{t+1}{e^{2t}\cosh(2x)}m_{\abs{\alpha}+1}(0).
\label{eq:RalphabyR_est}
\end{equation}
\item Estimates for $G_b-G$: for every multi-index $\alpha$ there is a constant $C$ such that
\begin{equation}
\abs{\partial^{\alpha}(G-G_b)}(t,x) \leq C \frac{(t+1)}{e^{2t}\cosh(2x)}m_{\abs{\alpha}+1}(0) \ \ \forall t \geq 0.
\label{eq:Galpha_est}
\end{equation}
Moreover, the constant depends only on a bound on $m_{\abs{\alpha}+1}(0)$.
\item For every multi-index $\alpha$ there is a constant $C$ such that
\begin{equation}
\abs{\partial^{\alpha} G}(t,x)\leq \frac{C}{\cosh{2x}} = 2C\sqrt{G_b} \ \ \forall t \geq 0.
\label{eq:G_derivative_est}
\end{equation}
Furthermore, the constant $C$ depends on $\alpha$ and on a bound on $m_{\abs{\alpha}+1}(0)$.
\item For every multi-index $\alpha$ there is a constant $C$ such that $\abs{\partial^\alpha {W_b}_x}\leq C/\cosh(2x)$. In addition, if $\alpha$ is not purely spatial then $\partial^{\alpha} {W_b}_x = 0$.
\item Estimates for $g$: For every multi-index $\alpha$ there is a constant $C$ such that
\begin{equation}
\abs{\partial^\alpha g}(t,x) \leq C\frac{t+1}{e^{2t}\cosh^{3/2}(2x)}m_{\abs{\alpha}+1}(0) \ \ \forall t \geq 0.
\label{eq:galpha_est} 
\end{equation}
Additionally, the constant $C$ just depends on a bound on $m_{\abs{\alpha}+1}(0)$. 
\end{enumerate}
\end{proposition}
\begin{proof}
Item a) is a direct consequence of D'Alembert's Formula. Item b) and c) are just computations using a) and the fact that $R \sim R_b$. For these computations, it is often helpful to use recursion formulas, such as
$$
\partial^{\alpha}\left(\frac{R_t}{R}\right) = \frac{\partial^\alpha R_t}{R} - \sum_{0\leq \beta<\alpha} \binom{\alpha}{\beta}\frac{\partial^{\alpha-\beta}R \partial^{\beta}(\tfrac{R_t}{R})}{R},
$$
or
$$\abs{\frac{\partial^\alpha(R^{1/2})}{R^{1/2}}}\leq \abs{\frac{\partial^\alpha R}{2 R}} + \sum_{0<\alpha<\beta}\binom{\alpha}{\beta} \abs{\frac{\partial^{\alpha-\beta}R^{1/2}\partial^{\beta}(R^{1/2})}{2R}}.$$
Item d) is proved similarly, and item e) is a consequence of a)-d). Item f) follows from item e). Item g) follows from direct computation, and finally, g) is a consequence of d), g), and b).
\end{proof}
\begin{lemma}[Energy estimates for time derivatives]\label{prop:Et_estimate}
Let $(R,z)$ be a smooth solution whose initial data differs only on a compact set from that of the background solution, and such that $m_0(0)<2R_0/3$, and let $\alpha = (m,0)$. Then, there is a constant $C$ such that,
$$\sqrt{E^{(m,0)}}(t) \leq C(\sqrt{E^{(m,0)}}(0) + \ldots + \sqrt{E^{(1,0)}}(0) + \sqrt{E}(0) + m_{\abs{\alpha}+1}(0)), \text{ for } t\geq 0.$$
Furthermore, the constant $C$ just depends on $m$ and on a bound on $m_{\abs{\alpha}+1}(0)$. 
\end{lemma}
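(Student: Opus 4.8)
The plan is to induct on $m$, the number of time derivatives. The base case $m=0$ is exactly \Cref{prop:basic_energy_inequality}. For the inductive step, I would commute $\partial^{(m,0)} = \partial_t^m$ through the $z$-equation \cref{eq:z}. Since $z_{tt}-z_{xx} = g - zG$, differentiating $m$ times in $t$ gives a wave equation for $w := \partial_t^m z$ of the form
\begin{equation}
w_{tt} - w_{xx} + w G_b = \partial_t^m g - \partial_t^m(zG) + w G_b = \partial_t^m g - \sum_{j=1}^{m}\binom{m}{j}\partial_t^j G \,\partial_t^{m-j}z - z\,(\partial_t^m G - 0) + w(G_b - G).
\end{equation}
(Here I used that $\partial_t^{m-j}w$-type terms regroup; the point is only that the right-hand side is a sum of a "good" piece $w(G_b-G)$, the source $\partial_t^m g$, and a commutator collecting $\partial_t^j G$ times lower-order time derivatives of $z$.) So $E^{(m,0)}(t)$ is the natural energy for $w$, and differentiating it under the integral — legitimate because everything is of locally $x$-compact support — yields $\dot E^{(m,0)} \le \big(\int w_t^2\big)^{1/2}\cdot\big(\int (\text{RHS})^2\big)^{1/2} \le \sqrt 2\,(E^{(m,0)})^{1/2}\cdot\|\text{RHS}\|_{L^2}$.

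The work is then in bounding $\|\text{RHS}\|_{L^2}$ by the right quantities. The term $w(G_b-G)$ is handled exactly as the "first term" in \Cref{prop:basic_energy_inequality}: \cref{eq:Galpha_est} with $\alpha=(0,0)$ gives $|G_b - G| \le C(t+1)e^{-2t}m_1(0)\sqrt{G_b}/(\text{const})$ up to the $\cosh^{-1}$ factor, so this contributes $C(t+1)e^{-2t}m_1(0)\,E^{(m,0)}$. The source term $\partial_t^m g$: by \cref{eq:galpha_est}, $|\partial_t^m g| \le C(t+1)e^{-2t}\cosh^{-3/2}(2x)\,m_{m+1}(0)$, which is in $L^2(dx)$ with $\|\partial_t^m g\|_{L^2} \le C(t+1)e^{-2t}m_{m+1}(0)$, contributing $C(t+1)e^{-2t}m_{m+1}(0)\,(E^{(m,0)})^{1/2}$. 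The commutator terms $\partial_t^j G\,\partial_t^{m-j}z$ for $1\le j\le m$: here I use \cref{eq:G_derivative_est}, $|\partial_t^j G| \le C\sqrt{G_b}$, so $\|\partial_t^j G\,\partial_t^{m-j}z\|_{L^2}^2 \le C\int (\partial_t^{m-j}z)^2 G_b\,dx \le C\,E^{(m-j,0)}(t)$, and this contributes $\sum_{j=1}^m C\,(E^{(m-j,0)})^{1/2}(E^{(m,0)})^{1/2}$, i.e. lower-order energies appear. Finally the term with $z\,\partial_t^m G$ is bounded the same way using $|\partial_t^m G|\le C\sqrt{G_b}$, giving $C\,E^{(0,0)\,1/2}(E^{(m,0)})^{1/2}$.

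Collecting, and writing $\mathcal E_m(t) := \sum_{j=0}^{m} (E^{(j,0)}(t))^{1/2}$, the differential inequality reads
\begin{equation}
\frac{d}{dt}(E^{(m,0)})^{1/2} \le C(t+1)e^{-2t}m_{m+1}(0)\,(E^{(m,0)})^{1/2} + C\,\mathcal E_{m-1}(t) + C(t+1)e^{-2t}m_{m+1}(0),
\end{equation}
after dividing by $2(E^{(m,0)})^{1/2}$ (with the standard care at zeros of the energy, or by working with $(E^{(m,0)}+\epsilon)^{1/2}$ and letting $\epsilon\to 0$). By the inductive hypothesis $\mathcal E_{m-1}(t)$ is already bounded by $C(\mathcal E_{m-1}(0) + m_{m+1}(0))$ uniformly in $t\ge 0$; the coefficient $C(t+1)e^{-2t}m_{m+1}(0)$ is integrable on $[0,\infty)$ with integral depending only on $m_{m+1}(0)$. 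So Grönwall gives $(E^{(m,0)})^{1/2}(t) \le C\big((E^{(m,0)})^{1/2}(0) + \mathcal E_{m-1}(0) + m_{m+1}(0)\big)$ with $C$ depending only on $m$ and a bound on $m_{m+1}(0)$, which is the claimed estimate.

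The main obstacle is bookkeeping in the commutator: one must be sure that every term produced by $\partial_t^m(zG)$ other than $z\,\partial_t^m G$ and the lower-order $\partial_t^j G\,\partial_t^{m-j}z$ pieces is either absorbed into $w(G_b-G)$ or already controlled, and that the $G$-derivative bound \cref{eq:G_derivative_est} — giving $\sqrt{G_b}$ decay rather than $G_b$ — is exactly what makes the products land in the energy rather than forcing a loss. The second subtlety is that these commutator terms force the appearance of all lower energies $E^{(j,0)}(0)$, $j<m$, on the right-hand side (hence the statement's form); this is harmless since the induction carries them, but it means one cannot close on $E^{(m,0)}$ alone. The rest is the routine Cauchy–Schwarz / Grönwall machinery already used for \Cref{prop:basic_energy_inequality}.
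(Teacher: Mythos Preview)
Your overall scheme (induction on $m$, commute $\partial_t^m$ through \cref{eq:z}, energy identity for $E^{(m,0)}$, then Gr\"onwall) is exactly the paper's. But the way you bound the commutator terms breaks the argument.

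You estimate $\partial_t^{j}G\cdot\partial_t^{m-j}z$ for $1\le j\le m$ using \cref{eq:G_derivative_est}, namely $|\partial_t^j G|\le C\sqrt{G_b}$. This bound carries \emph{no decay in $t$}. Consequently your differential inequality reads
\[
\frac{d}{dt}(E^{(m,0)})^{1/2}\ \le\ C(t+1)e^{-2t}m_{m+1}(0)\,(E^{(m,0)})^{1/2}+C\,\mathcal E_{m-1}(t)+C(t+1)e^{-2t}m_{m+1}(0),
\]
with the middle term $C\,\mathcal E_{m-1}(t)$ having no integrable prefactor. By induction $\mathcal E_{m-1}(t)$ is only known to be \emph{bounded}, not decaying, so integrating gives $(E^{(m,0)})^{1/2}(t)\lesssim (E^{(m,0)})^{1/2}(0)+Ct$, which grows linearly and does not yield the claimed uniform bound. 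Gr\"onwall cannot absorb a constant inhomogeneity on $[0,\infty)$.

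The fix is the observation the paper makes explicitly: $G_b=1/\cosh^2(2x)$ is time-independent, so for every $j\ge 1$ one has $\partial_t^j G=\partial_t^j(G-G_b)$. Hence the right estimate to invoke is \cref{eq:Galpha_est}, which gives
\[
|\partial_t^j(G-G_b)|\le C\,\frac{t+1}{e^{2t}\cosh(2x)}\,m_{j+1}(0)=C(t+1)e^{-2t}\sqrt{G_b}\,m_{j+1}(0).
\]
With this, each commutator term contributes $C(t+1)e^{-2t}m_{m+1}(0)\,(E^{(m-j,0)})^{1/2}(E^{(m,0)})^{1/2}$ to $\dot E^{(m,0)}$, i.e.\ the lower-order energies appear multiplied by an integrable-in-time coefficient. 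Then the induction plus Gr\"onwall closes exactly as you intended. Everything else in your write-up (treatment of $w(G_b-G)$, of $\partial_t^m g$, the $\epsilon$-regularization at zeros of the energy) is fine.
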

\begin{proof}
We have already proved the case $\alpha = (0,0)$ in the \Cref{prop:basic_energy_inequality}. Let us proceed by induction. Deriving the equation m-times with respect to $t$ yields:
$$
(\partial^{m}_t z)_{tt} - (\partial^{m}_t z)_{xx} + (\partial^m_t z) G + \sum_{i=0}^{m-1} \binom{\alpha}{\beta} \underbrace{\partial_t^i z}_{\substack{\text{inside } E^{(i,0)} \\ \text{already controlled}}} \underbrace{\partial_t^{m-i}(G-G_b)}_{\text{use \cref{eq:Galpha_est}}} = \underbrace{\partial^m_t g}_{\text{\text{use \cref{eq:galpha_est}}}}.$$
Here, we have used that $\partial_t^i G_b = 0$ in the last term before the equal sign.
Now, as we did in the proof of the
\Cref{prop:basic_energy_inequality}, we can differentiate $E^{(m,0)}$ with respect to time, integrate by parts, use the equation and control each of the terms that appear. The new terms are the ones that have a curly bracket in the equation above. Below these brackets, it is specified how to control these terms.  
\end{proof}
The following lemma goes in the direction of proving the desired estimates for $E^{(0,n)}$. Here we use the notation $\abs{\alpha} = m+n$ for $\alpha = (m,n)$.
\begin{proposition}\label{lemma:key_formula}
In the assumptions of \cref{prop:Et_estimate}, suppose $n\geq 1$ and $m\geq 0$, then there is a constant $C$ such that,
$$\sqrt{E^{(m,n)}}(t) \leq C\sqrt{E^{(m+1,n-1)}}
(t)+ C\sum_{0 \leq \beta \leq (m,n-1)} \sqrt{E^{\beta}}(t) + Cm_{\abs{\alpha}}(0) \ \text{for } t\geq 0,$$
where $\alpha = (m,n)$.
Now if $m\geq 1$ and $n\geq 0$ then there is a constant $C$ such that, 
$$\sqrt{E^{(m,n)}}(t) \leq C\sqrt{E^{(m-1,n+1)}}
(t)+ C\sum_{0 \leq \beta \leq (m-1,n)} \sqrt{E^{\beta}}(t) + Cm_{\abs{\alpha}}(0) \ \text{for } t\geq 0.$$

The constants just depend on a bound on $m_{\abs{\alpha}+1}(0)$, and on $m$ and $n$. 
\end{proposition}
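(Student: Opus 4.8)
The plan is to use the wave equation satisfied by $z$ and by its derivatives to trade the single ``new'' second derivative hidden inside $E^{(m,n)}$ for a combination of strictly lower-order energies plus a harmless coefficient term, exploiting that $z_{xx}$ and $z_{tt}$ differ only by the already-controlled quantities $zG$ and $g$. For the first inequality ($n\geq1$, $m\geq0$) I would look at the three pieces of $2E^{(m,n)}$, namely $\norm{(\partial^{(m,n)}z)_t}_{L^2}^2$, $\norm{(\partial^{(m,n)}z)_x}_{L^2}^2$ and $\norm{\sqrt{G_b}\,\partial^{(m,n)}z}_{L^2}^2$. Since mixed partials commute, $(\partial^{(m,n)}z)_t=(\partial^{(m+1,n-1)}z)_x$ and $\partial^{(m,n)}z=(\partial^{(m,n-1)}z)_x$, so the first piece is at most $\sqrt{2E^{(m+1,n-1)}}$ and, using $G_b\leq1$ hence $\sqrt{G_b}\leq1$, the third is at most $\sqrt{2E^{(m,n-1)}}$; both are already of the required form.

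Only $\norm{(\partial^{(m,n)}z)_x}_{L^2}=\norm{\partial^{(m,n+1)}z}_{L^2}$ needs the equation. Applying $\partial^{(m,n-1)}$ to $z_{tt}-z_{xx}+zG=g$ and using $\partial^{(m,n-1)}z_{xx}=\partial^{(m,n+1)}z$, $\partial^{(m,n-1)}z_{tt}=\partial^{(m+2,n-1)}z$, one obtains
\[
\partial^{(m,n+1)}z=\partial^{(m+2,n-1)}z+\partial^{(m,n-1)}(zG)-\partial^{(m,n-1)}g.
\]
Here $\partial^{(m+2,n-1)}z=(\partial^{(m+1,n-1)}z)_t$ has $L^2$ norm $\leq\sqrt{2E^{(m+1,n-1)}}$; by \eqref{eq:galpha_est}, $\abs{\partial^{(m,n-1)}g}\leq C(t+1)e^{-2t}\cosh^{-3/2}(2x)\,m_{\abs{\alpha}}(0)$, whose $L^2$ norm is $\leq Cm_{\abs{\alpha}}(0)$ since $(t+1)e^{-2t}$ is bounded for $t\geq0$ and $\cosh^{-3}(2x)$ is integrable; and by the Leibniz rule $\partial^{(m,n-1)}(zG)=\sum_{0\leq\beta\leq(m,n-1)}\binom{(m,n-1)}{\beta}\partial^{\beta}z\,\partial^{(m,n-1)-\beta}G$, where item f) of \Cref{lemma:coef_est_2} bounds every derivative of $G$ pointwise by $2C\sqrt{G_b}$, so each summand has $L^2$ norm $\leq C\norm{\sqrt{G_b}\,\partial^{\beta}z}_{L^2}\leq C\sqrt{2E^{\beta}}$ with $\beta\leq(m,n-1)$. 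Adding the bounds for the three pieces and using $\sqrt{a^2+b^2+c^2}\leq a+b+c$ gives the first claim, with constants depending only on $m,n$ and on a bound on $m_{\abs{\alpha}+1}(0)$, those being exactly the dependencies inherited from item f) and \eqref{eq:galpha_est}.

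The second inequality ($m\geq1$, $n\geq0$) is the mirror image with the roles of $t$ and $x$ interchanged: now $(\partial^{(m,n)}z)_x=(\partial^{(m-1,n+1)}z)_t$ and $\partial^{(m,n)}z=(\partial^{(m-1,n)}z)_t$ dispose of two of the three pieces of $2E^{(m,n)}$, while for $\norm{(\partial^{(m,n)}z)_t}_{L^2}=\norm{\partial^{(m+1,n)}z}_{L^2}$ one applies $\partial^{(m-1,n)}$ to the equation to get $\partial^{(m+1,n)}z=\partial^{(m-1,n+2)}z-\partial^{(m-1,n)}(zG)+\partial^{(m-1,n)}g$, and estimates the three terms exactly as before, the relevant lower-order indices now being $\beta\leq(m-1,n)$.

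I expect the only point demanding care to be the bookkeeping of the Leibniz expansion of $\partial^{(m,n-1)}(zG)$ (resp.\ $\partial^{(m-1,n)}(zG)$): one has to check that each factor $\partial^{\beta}z$ always comes multiplied by a derivative of $G$---and all derivatives of $G$ decay like $\sqrt{G_b}=1/\cosh(2x)$ by item f) of \Cref{lemma:coef_est_2}---and that the companion index $\beta$ never exceeds $(m,n-1)$ (resp.\ $(m-1,n)$), so that $\sqrt{E^{\beta}}$ is among the terms the statement permits. Everything else is the triangle inequality and Cauchy--Schwarz; the equation does the real work, turning the one problematic derivative into objects already living inside energies of order $\leq\abs{\alpha}$, so no genuine analytic obstacle appears.
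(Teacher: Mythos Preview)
Your proof is correct and follows essentially the same route as the paper: identify the two ``easy'' pieces of $E^{(m,n)}$ as derivative terms already sitting inside $E^{(m+1,n-1)}$ and $E^{(m,n-1)}$, then apply $\partial^{(m,n-1)}$ to the equation $z_{tt}-z_{xx}+zG=g$ to control $\|\partial^{(m,n+1)}z\|_{L^2}$ via the Leibniz expansion of $\partial^{(m,n-1)}(zG)$ together with the pointwise bound $|\partial^{\gamma}G|\leq C\sqrt{G_b}$ from item~f) and the decay estimate \eqref{eq:galpha_est} for $\partial^{(m,n-1)}g$. The second inequality is indeed the symmetric argument with the roles of $t$ and $x$ exchanged, exactly as in the paper.
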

\begin{proof}
$$
\begin{aligned}
E^{(m,n)}(t) &= \frac{1}{2}\int_\R \underbrace{[(\partial_t^m\partial_x^n z)_t]^2}_{\text{It is inside } E^{(m+1,n-1)}}+ [(\partial_t^m\partial_x^n z)_x]^2+ \underbrace{(\partial_t^m\partial_x^n z)^2}_{\text{It is inside } E^{(m,n-1)}}\underbrace{G_b}_{\leq 1} \ dx \\
& \leq E^{(m+1,n-1)}(t) + E^{(m,n-1)}(t) + \frac{1}{2}\int_\R [(\partial_t^m\partial_n^x z)_x]^2 \ dx .\\
\end{aligned}$$
To bound the last term, differentiate the \cref{eq:z} $m-$times with respect to time and $n-1-$times with respect to $x$. The differentiated equation gives us the following estimate, 
$$\int_\R (\partial^m_t \partial_x^{n+1}z)^2  \leq C \underbrace{\int_\R(\partial_t^{m+2}\partial_x^{n-1} z)^2}_{\text{It is in } E^{(m+1,n-1)}} + C \sum_{\scriptscriptstyle \beta = (0,0)}^{\scriptscriptstyle (m,n-1)} \underbrace{\int_\R (\partial^\beta z \partial^{\alpha-\beta}G)^2}_{\leq C E^{\beta}} + \underbrace{\int_\R(\partial_t^m \partial_x^{n-1}g)^2}_{\leq C m_{\abs{\alpha}}^2(0)}.
$$
Using this and putting a square root, we arrive at the first claim stated in the lemma. Keeping track of the constant, we see that the assertion about its dependence is true. For the second inequality stated in the lemma, the same reasoning works. 
\end{proof}
\begin{corollary}\label{cor:bridge}
In the assumptions of \cref{prop:Et_estimate}, given $n$, there is a constant $C$ such that, $\forall t \geq 0$
\begin{equation}
\sqrt{E^{(0,n)}}(t) \leq C(\sqrt{E^{(n,0)}}(t)+\ldots + \sqrt{E^{(1,0)}}(t) + \sqrt{E}(t)) + Cm_n(0), \label{eq:kf_from_space_to_time}
\end{equation}
and
\begin{equation}
\sqrt{E^{(n,0)}}(t) \leq C(\sqrt{E^{(0,n)}}(t)+\ldots + \sqrt{E^{(0,1)}}(t) + \sqrt{E}(t)) + Cm_n(0). \label{eq:kf_from_time_to_space}
\end{equation}
The constant $C$ depends only on a bound on $m_{n+1}(0)$ and on $n,m$.
\end{corollary}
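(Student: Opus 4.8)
The plan is to iterate the two inequalities of \cref{lemma:key_formula}, which let one exchange a spatial derivative for a time derivative (first inequality) or a time derivative for a spatial one (second inequality) at the cost only of energies of \emph{strictly smaller} total order of differentiation, and then close an induction on that total order.

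For \cref{eq:kf_from_space_to_time} I would first establish the auxiliary claim that for every multi-index $\beta=(a,b)$ with $\abs{\beta}\le n$ there is a constant $C$, depending only on $\abs{\beta}$ and on a bound on $m_{\abs{\beta}+1}(0)$, such that
\[
\sqrt{E^{\beta}}(t)\le C\big(\sqrt{E}(t)+\sqrt{E^{(1,0)}}(t)+\cdots+\sqrt{E^{(\abs{\beta},0)}}(t)\big)+C\,m_{\abs{\beta}}(0),\qquad t\ge 0,
\]
and prove it by strong induction on $\abs{\beta}$. The case $b=0$ is immediate. If $b\ge 1$, apply the first inequality of \cref{lemma:key_formula} to $E^{(a,b)}$, then to $E^{(a+1,b-1)}$, and so on along the chain $(a,b)\to(a+1,b-1)\to\cdots\to(a+b,0)$: this terminates after exactly $b$ steps, the total order is preserved throughout, the hypothesis $n\ge 1$ of \cref{lemma:key_formula} holds at each step precisely because there is still a spatial derivative to trade, and each step introduces only energies $E^{\gamma}$ with $\abs{\gamma}\le a+b-1$ together with an $m_{a+b}(0)$ term. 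Collecting everything yields $\sqrt{E^{(a,b)}}(t)\le C\sqrt{E^{(a+b,0)}}(t)+C\sum_{\abs{\gamma}\le a+b-1}\sqrt{E^{\gamma}}(t)+C\,m_{a+b}(0)$, and the inductive hypothesis bounds each $\sqrt{E^{\gamma}}(t)$ by $C(\sqrt{E}(t)+\cdots+\sqrt{E^{(\abs{\gamma},0)}}(t))+C\,m_{\abs{\gamma}}(0)$, which is absorbed into the claimed bound since $m_{\abs{\gamma}}(0)\le m_{\abs{\beta}}(0)$ and the partial sums are monotone. Specializing to $\beta=(0,n)$ gives \cref{eq:kf_from_space_to_time}.

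For \cref{eq:kf_from_time_to_space} I would run the mirror-image argument: prove by strong induction on $\abs{\beta}$ that $\sqrt{E^{\beta}}(t)\le C(\sqrt{E}(t)+\sqrt{E^{(0,1)}}(t)+\cdots+\sqrt{E^{(0,\abs{\beta})}}(t))+C\,m_{\abs{\beta}}(0)$, now iterating the \emph{second} inequality of \cref{lemma:key_formula} along the chain $(a,b)\to(a-1,b+1)\to\cdots\to(0,a+b)$ to trade time derivatives for spatial ones, and then specialize to $\beta=(n,0)$. Since every invocation of \cref{lemma:key_formula} in either argument occurs at total order at most $n$, its constants are controlled by a bound on $m_{n+1}(0)$ and by $n$, so the final constant enjoys the asserted dependence.

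There is no genuine analytic difficulty here beyond what is already packaged in \cref{lemma:key_formula}; the only step that needs care is the combinatorial bookkeeping of the trading chains — checking that each chain terminates after the correct number of steps, that the hypotheses of \cref{lemma:key_formula} ($n\ge 1$, resp.\ $m\ge 1$) are met at every step, and, most importantly, that all the auxiliary energies generated along the way have \emph{strictly} smaller total order, so that the induction is well-founded and the accumulated constants stay under control.
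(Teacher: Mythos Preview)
Your proposal is correct and is exactly the approach the paper takes: the paper's proof reads in its entirety ``Use the previous lemma and induction,'' and you have simply spelled out that induction in detail, including the trading chains and the observation that all auxiliary energies have strictly smaller total order so that the strong induction closes with the stated dependence of the constant.
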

\begin{proof}
Use the previous lemma and induction.
\end{proof}
\begin{lemma}[Energy estimates for spatial derivatives] Let $(R,z)$ be a smooth solution whose initial data differs only on a compact set from that of the background solution, and such that $m_0(0)<2R_0/3$. Then there is a constant $C$ such that, 
\begin{equation}
\sqrt{E^{(0,n)}}(t) \leq C(\sqrt{E^{(0,n)}}(0) + \ldots + \sqrt{E^{(0,1)}}(0) + \sqrt{E}(0)) + Cm_{n+1}(0), \ \forall t \geq 0.
\label{eq:Ex_estimate}
\end{equation}
The constant $C$ just depends on $n$ and on a bound on $m_{
n+1}(0)$.
\end{lemma}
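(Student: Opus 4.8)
The plan is to obtain the spatial estimate purely by ``rotating'' derivatives: combine the time‑derivative estimate of \Cref{prop:Et_estimate} with the bridge \Cref{cor:bridge} applied twice — once at time $t$ and once at time $0$. The case $n=0$ is exactly \Cref{prop:basic_energy_inequality}, so assume $n\ge 1$.

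First, apply \cref{eq:kf_from_space_to_time} of \Cref{cor:bridge} at time $t$ to trade the spatial energy for the purely–time ones:
\[
\sqrt{E^{(0,n)}}(t)\le C\Big(\sqrt{E^{(n,0)}}(t)+\cdots+\sqrt{E^{(1,0)}}(t)+\sqrt{E}(t)\Big)+Cm_n(0).
\]
Next, for each $j=1,\dots,n$ bound $\sqrt{E^{(j,0)}}(t)$ by the right‑hand side evaluated at $t=0$ using \Cref{prop:Et_estimate}, and bound $\sqrt{E}(t)$ by $C(\sqrt{E}(0)+m_1(0))$ via \cref{eq:basic_energy_inequality}. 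Summing the resulting finitely many inequalities and absorbing $m_n(0),\,m_j(0)\le m_{n+1}(0)$ yields
\[
\sqrt{E^{(0,n)}}(t)\le C\Big(\sqrt{E^{(n,0)}}(0)+\cdots+\sqrt{E^{(1,0)}}(0)+\sqrt{E}(0)\Big)+Cm_{n+1}(0).
\]
The right‑hand side still contains the purely–time energies $E^{(j,0)}(0)$, which are \emph{not} initial‑data quantities since they involve $\partial_t^{\,j+1}z$. Using the equation, i.e. applying \cref{eq:kf_from_time_to_space} of \Cref{cor:bridge} at $t=0$,
\[
\sqrt{E^{(j,0)}}(0)\le C\Big(\sqrt{E^{(0,j)}}(0)+\cdots+\sqrt{E^{(0,1)}}(0)+\sqrt{E}(0)\Big)+Cm_j(0),
\]
and since each $E^{(0,i)}(0)$ depends only on $z(0,\cdot)$ and $z_t(0,\cdot)$, substituting for $j=1,\dots,n$ and collecting terms gives \cref{eq:Ex_estimate}. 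The claimed dependence of $C$ (on $n$ and a bound on $m_{n+1}(0)$) is inherited directly from the three ingredients, since the orders of $m$ appearing never exceed $n+1$: in the first step we meet $m_n(0)$, in the second $m_{j+1}(0)\le m_{n+1}(0)$, and in the third $m_j(0)\le m_{n}(0)$.

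The argument carries no genuine analytic difficulty beyond what is already contained in \Cref{prop:Et_estimate,cor:bridge}; the only real point to be careful about is the combinatorial bookkeeping — tracking which energies and which $m_\cdot(0)$ appear at each stage so that the final estimate closes with $m_{n+1}(0)$ rather than a higher index — together with the conceptual observation, already packaged in \Cref{cor:bridge}, that the time energies at $t=0$ must be re‑expressed through the wave equation before they can be interpreted as initial‑data quantities.
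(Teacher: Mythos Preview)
Your proposal is correct and follows precisely the same three-step route as the paper's proof: apply \cref{eq:kf_from_space_to_time} at time $t$, then \Cref{prop:Et_estimate} to push the time energies back to $t=0$, and finally \cref{eq:kf_from_time_to_space} at $t=0$ to re-express those as spatial energies. Your version simply spells out the bookkeeping that the paper leaves implicit.
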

\begin{proof}
First, note that we have already proved the corresponding theorem for $E^{(n,0)}$ in the \Cref{prop:Et_estimate}. Secondly, we use the \cref{eq:kf_from_space_to_time}, then the \Cref{prop:Et_estimate}, and finally the \cref{eq:kf_from_time_to_space}, this time evaluated at $t=0$. 
\end{proof}
\subsection{Returning to the original variables and proof of the main stability theorem.}
Let us define $$\mathcal{M}_k[z](t):= \norm{z}_{H^k}(t) + \norm{\partial_t z}_{H^{k-1}}(t), \ \ \ \mathcal{A}(t) := \frac{1}{2}\int_{\R} z^2 \ dx. $$
Note that
$\dot{\mathcal{A}} \leq 2E^{1/2}\mathcal{A}^{1/2} \leq C(E^{1/2}(0) + m_1(0))\mathcal{A}^{1/2}
$. Therefore
$$\mathcal{A}^{1/2}(t) \leq \mathcal{A}^{1/2}(0) + C(E^{1/2}(0) + m_1(0)) t.$$
\begin{theorem}
Let $(R,z)$ be a smooth solution whose initial data differs only on a compact set from that of the background solution, and such that $m_0(0)<2R_0/3$, and let $k>0$. Then the solution is defined for all $t \geq 0$, and there is a constant $C$ such that
\begin{equation}
\mathcal{M}_k(t) \leq C\left(\mathcal{M}_k(0) + m_k(0)\right)(t+1), \ \text{for } t\geq 0.
\label{eq:Hk_estimate}
\end{equation}
The constant $C$ here depends on a bound on $m_{k}(0)$ and on $k$. 
\end{theorem}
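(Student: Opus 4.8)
The strategy is to bootstrap from the basic energy $E(t)$ to the full collection $\{E^{\beta}: |\beta|\leq k\}$, convert those bounds into control of $\mathcal{M}_k(t)$, and handle the zeroth-order piece ($\|z\|_{L^2}$) separately, which is the only place a factor of $(t+1)$ genuinely appears. First I would recall from \Cref{prop:basic_energy_inequality} that $E^{1/2}(t)\leq C(E^{1/2}(0)+m_1(0))$, and from the ``energy estimates for spatial derivatives'' lemma that, for every $n\leq k$,
$$
\sqrt{E^{(0,n)}}(t)\leq C\Big(\sqrt{E^{(0,n)}}(0)+\cdots+\sqrt{E^{(0,1)}}(0)+\sqrt{E}(0)\Big)+Cm_{n+1}(0).
$$
By \Cref{cor:bridge} the quantities $E^{(0,j)}(0)$ for $j\leq n$ are each bounded by $E^{(j,0)}(0)+\cdots+\sqrt{E}(0)+m_j(0)$, and, crucially, the time-derivative energies $E^{(i,0)}(0)$ at $t=0$ are themselves expressible through the initial data: $\partial_t^i z$ at $t=0$ is obtained by differentiating \cref{eq:z} and its derivatives in $t$ and solving algebraically for the highest time derivative, so $\sqrt{E^{(i,0)}}(0)$ is controlled by $\mathcal{M}_k[z](0)$ together with coefficient norms $m_{i+1}(0)$ (using the bounds in \Cref{lemma:coef_est_2} at $t=0$). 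Chaining these, every $\sqrt{E^{(0,n)}}(t)$ with $n\leq k$ is bounded by $C(\mathcal{M}_k[z](0)+m_k(0))$, uniformly in $t\geq 0$, with $C$ depending only on $k$ and a bound on $m_k(0)$.

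Next I would translate the energies $E^{(0,n)}$ back into Sobolev norms of $z$. Since $G_b=1/\cosh^2(2x)\geq 0$, the energy $E^{(0,n)}(t)$ directly bounds $\tfrac12\int (\partial_x^n z)_t^2+(\partial_x^n z)_x^2\,dx$, so summing over $n\leq k$ controls $\|\partial_x z\|_{H^{k-1}}$, $\|z\|_{\dot H^k}$, and $\|\partial_t z\|_{\dot H^{k-1}}$ (the homogeneous pieces). What is missing is the $L^2$ norm of $z$ itself and of $\partial_t z$. For $\partial_t z$ this is already inside $E$ (it is the term $\int z_t^2$), so $\|z_t\|_{L^2}(t)\leq \sqrt{2}E^{1/2}(t)\leq C(E^{1/2}(0)+m_1(0))$ with no growth. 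For $z$ itself one uses the observation already recorded before the statement: with $\mathcal{A}(t)=\tfrac12\int z^2\,dx$ one has $\dot{\mathcal A}\leq 2E^{1/2}\mathcal A^{1/2}$, hence $\mathcal A^{1/2}(t)\leq \mathcal A^{1/2}(0)+C(E^{1/2}(0)+m_1(0))\,t$, which is exactly where the linear-in-$t$ factor enters. Collecting the homogeneous pieces (bounded uniformly in $t$) with this $L^2$ piece (growing at most linearly) gives
$$
\mathcal{M}_k[z](t)\leq C\big(\mathcal{M}_k[z](0)+m_k(0)\big)(t+1).
$$

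Finally I would undo the change of variables $z=R^{1/2}(W-W_b)$ to recover the statement in terms of $\mathcal{M}_k$. Here one writes $W-W_b=R^{-1/2}z$ and applies the Leibniz rule; the coefficients $\partial^\alpha(R^{-1/2})$ and $\partial^\alpha(R^{1/2})$ are bounded to the future with bounds depending only on $m_{|\alpha|}(0)$ by \Cref{lemma:coef_est_2}(b), so passing between $\mathcal{M}_k[z]$ and $\mathcal{M}_k$ costs only such constants, and similarly at $t=0$. Global existence for all $t\geq 0$ is immediate: $R>0$ for $t\geq 0$ by the $R$-estimate, the equation for $W$ with $R$ fixed is linear, so no blow-up can occur, and the a priori energy bounds just derived confirm the solution stays in the relevant spaces. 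The main obstacle I anticipate is the careful accounting in passing from $E^{(i,0)}(0)$ to the initial data: one must verify that solving \cref{eq:z} and its $t$-derivatives for the top time derivative only involves strictly lower-order time derivatives of $z$ plus coefficients ($G$, $g$) that are controlled by \Cref{lemma:coef_est_2}, and that the induction closes with the constant depending only on $k$ and a bound on $m_k(0)$ — everything else is the routine but somewhat lengthy bookkeeping already foreshadowed in \Cref{prop:Et_estimate} and \Cref{lemma:key_formula}.
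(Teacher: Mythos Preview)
Your approach is essentially the paper's: bound the spatial energies $E^{(0,n)}(t)$ via the ``energy estimates for spatial derivatives'' lemma, add the estimate for $\mathcal{A}^{1/2}(t)$ to cover $\|z\|_{L^2}$ (producing the sole factor of $(t+1)$), and assemble $\mathcal{M}_k[z]$. Two of your steps, however, are unnecessary and reflect a slight misreading.

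First, the detour through \Cref{cor:bridge} at $t=0$ and the ``careful accounting'' you flag as the main obstacle are not needed. The energies $E^{(0,j)}(0)$ involve only $(\partial_x^j z)_t$, $(\partial_x^j z)_x$, and $(\partial_x^j z)^2 G_b$, all evaluated at $t=0$; since $G_b\leq 1$, these are bounded \emph{directly} by $\mathcal{M}_k[z](0)^2$ with no conversion to time-derivative energies required. The bridge corollary is used in the paper only to \emph{prove} the spatial-derivative lemma, not to post-process its output. Second, the theorem is stated for $\mathcal{M}_k=\mathcal{M}_k[z]$, not for $\widetilde{\mathcal{M}}_k$; undoing the change of variables $z=R^{1/2}(W-W_b)$ belongs to the subsequent corollary, not here. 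Finally, a small index slip: to control $\mathcal{M}_k$ you only need $E^{(0,n)}$ for $n\leq k-1$ (since $E^{(0,k-1)}$ already contains $(\partial_x^{k-1}z)_x^2=(\partial_x^k z)^2$ and $(\partial_x^{k-1}z_t)^2$), so the constant depends on $m_k(0)$ as stated rather than $m_{k+1}(0)$.
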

\begin{proof}
Just note that $\mathcal{M}_k$ involves $(\partial^i_x z)^2$ with $i=0,...,k$ and $(\partial^j_x \partial_t z)^2$ with $j=0,...,k-1$, and that all of these terms appear in the t-derivative or the x-derivative term of one of the followings quantities: $\mathcal{A}, E, E^{(0,1)},\ldots, E^{(0,k-1)}$. Using this, with the above computation for $\mathcal{A}$ and the \cref{eq:Ex_estimate}, yields the \cref{eq:Hk_estimate}.
\end{proof}
From this result we can easily prove the existence of solutions with certain decay in the variables $(R,W)$. Essentially, we can return to $W$ multiplying by $e^{-t}$. The precise statement is given by the \Cref{lemma:passage_complete} in the next section.
Granted this, and recalling the definition
\begin{equation}
\widetilde{\mathcal{M}}_k(t):= \norm{W-W_b}_{\widetilde{H}_k}(t) + \norm{\partial_t(W-W_b)}_{\widetilde{H}_{k-1}}(t),
\label{eq:Mtilde}
\end{equation}
we finally obtain the main result of this section.
\begin{theorem}\label{cor:polarized_compact_sup}
Given $(R, W)$ a smooth solution to the \cref{eq:sysR,eq:sysW}, with $q=0$. Suppose also that the initial data $R, R_t, W, W_t$ differs from that of the background in a compact set and that $m_0(0)<2R_0/3$. Then, the solution is defined for every $t\geq 0$ and
$$\widetilde{\mathcal{M}}_k(t) \leq Ce^{-t}(t+1)\left(\widetilde{\mathcal{M}}_k(0) + m_k(0)\right) \ \ \forall t \geq 0.$$
Moreover, the constant $C$ depends only on a bound on $m_k(0)$ and $k$.
\end{theorem}

\section{Proof of the future long-time existence}\label{sec:future_existence}
The goal of this section is to prove a future long-time existence result for general solutions to the \cref{eq:sysR,eq:sysW,eq:sysq}. This is accomplished by the \cref{th:existencia_todo_tiempo}. Additionally, we obtain $C^0$ and $C^1$ bounds that will be useful in the next section.

\subsection{The energy-momentum tensor and a bound for the distance from the background.}
Again, let us consider a solution to the \cref{eq:sysR,eq:sysW,eq:sysq}. Note that, at first, the solutions may be defined only on a interval $[0,T)$. We assume $T$ is the maximal time of existence. In this section, we require the solutions to be $C^2$. Moreover, we ask this solutions and their time-derivatives to differ initially from the background only on a compact set. Due to a finite speed propagation argument, $R-R_b$, $W-W_b$, and $q-q_b$, where defined, are of -locally x-compact support. These properties allow us to integrate by parts and differentiate under the integral sign. 

Recall $\chi:\mathbb{K} \to \mathbb{H}$ the map in \cref{eq:chi}, and the metric $k = 4e^{4t}(-dt^2+dx^2) + R^2(t,x)d\psi^2$. Consider the energy-momentum tensor given by $T_{ab} := \partial_a\chi^i \partial_b \chi^j h_{ij} - \frac{1}{2}k_{ab}k^{\alpha \beta}h_{ij}\partial_\alpha \chi^i \partial_\beta \chi^j$. A direct computation shows that $\nabla^a T_{ab} = 0$. Consider the vector field $X := \partial_t/2e^{2t}$, the slice $S_t = \{t\}\times \R \times S^1$ and the vector field given by $T^a_{ \ b} X^b$. This vector field is the dual vector to the 1-form $T(\cdot, X)$. Using Stokes\footnote{As the solutions we are treating now differ from the background only on a compact set, we only have these two terms contributing to the flux.} we have:
$$\int_{[0,t_0]\times \R \times S^1} \nabla^a(T_{ab}X^b) \ dVol = \int_{S_0} X^aT_{ab}X^b \ dS_0 - \int_{S_t} X^aT_{ab}X^b \ dS_t $$
Now $\nabla^a(T_{ab}X^b) = T_{ab}{\nabla^aX^b}$. A computation shows that
$$
\begin{aligned}
\nabla^a X^b &= \frac{1}{4e^{6t}}(\partial_x)^a(\partial_x)^b + \frac{R_t}{R^3}\frac{1}{2e^{2t}} (\partial_\psi)^a(\partial_\psi)^b,\\
T_{00} &= T_{11} = \frac{1}{2}(\norm{\partial_t \chi}_h^2 + \norm{\partial_x \chi}_h^2),\\
T_{22} &= \frac{R^2}{2}\frac{\norm{\partial_t \chi}_h^2 - \norm{\partial_x \chi}_h^2}{4e^{4t}}.
\end{aligned}
$$
Using this, we have, 
$$
\begin{aligned}
\frac{d}{dt} \int_\R \frac{1}{2}(\norm{\partial_t \chi}_h^2 + \norm{\partial_x \chi}_h^2) \frac{R}{e^{2t}} dx &= -2\int_{\R} \frac{\norm{\partial_t \chi}_h^2}{2}\left(1 + \dfrac{R_t}{2R}\right) \frac{R}{e^{2t}} \ dx \\ &  -2\int_\R \frac{\norm{\partial_x \chi}_h^2}{2}\left(1 - \dfrac{R_t}{2R}\right) \frac{R}{e^{2t}} \ dx. \\
\end{aligned}
$$
Let us define
$$\mathcal{S} = \int_\R \frac{1}{2}(\norm{\partial_t \chi}_h^2 + \norm{\partial_x \chi}_h^2) \frac{R}{e^{2t}} dx.$$
Since $m_0(0)<\frac{2}{3}R_0$, the \cref{eq:RalphabyR_est} holds. Using this estimate yields $\mathcal{S}' \leq C(t+1)(m_1(0)+2)e^{-2t}\mathcal{S}$. 
Integrating we obtain $\mathcal{S}(t)\leq \exp(C(m_1(0)+2))\mathcal{S}(0)$ for the future. Now 
$$\begin{aligned}
\abs{W-W(-\infty)}\leq \int_\R \abs{W_x} \ dx &\leq C\sqrt{\int_\R W_x^2\cosh(2x)\ dx} \\ &\leq CS^{1/2}(t)\leq C \ exp(C(m_1(0)+2))\mathcal{S}^{1/2}(0).
\end{aligned}
$$
As $W(-\infty)=W_b(-\infty)$, we conclude that $W$ is bounded. We do analogously for $q-q_b = q$. This result allows us to conclude that the $C^0-$distance between the background solution and a solution $(W,q)$, whose initial data is a compactly supported perturbation of that of the background, is bounded. Geometrically, this means that if we start with a solution that is a compactly supported perturbation of a geodesic, then this perturbation evolves at a bounded distance. We have obtained
\begin{proposition}[$C^0$ estimate]\label{prop:controlC0}
Let $(R,W,q)$ be a $C^2$ solution to the system, \cref{eq:sysR,eq:sysW,eq:sysq}, such that the initial data differ from the initial data of the background solution only on a compact set. Suppose also that $m_0(0)<2R_0/3$. Then $(W,q)$ remains bounded. Moreover, this distance depends on a bound on $m_1(0)$ and on $\widetilde{\mathcal{M}}_1(0)$, where now 
$$\widetilde{\mathcal{M}}_1[W-W_b,q](t) = (\norm{W-W_b}_{\widetilde{H}^1} + \norm{(W-W_b)_t}_{\widetilde{H}^0} + \norm{q}_{\widetilde{H}^1} + \norm{q_t}_{\widetilde{H}^0})(t).$$
\end{proposition}
We define $\widetilde{\mathcal{M}}_k[\cdot,\cdot]$ in the same manner but using $k$ and $k-1$ norms. Again, whenever we say $\widetilde{\mathcal{M}}_k$, the reader should interpret $\widetilde{\mathcal{M}}_k[W-W_b,q]$. 
\subsection{$C^1$ estimate and future long-time existence.}
Again, we use the notation $\chi = (W,q)$ and $\norm{\partial_t \chi}^2 = \abs{\partial_t W}^2 + \abs{\partial_t q}^2$, and similarly for $\norm{\partial_x \chi}$. Recall also that $\norm{\partial_t \chi}_h^2 = 4\abs{\partial_t W}^2 + e^{-4W}\abs{\partial_x q}^2$, and analogously for $\norm{\partial_x \chi}_h$. 
\begin{proposition}[$C^1$ estimate]\label{prop:controlC1}
Let $(R,W,q)$ be a $C^2$ solution to the system given by \cref{eq:sysR,eq:sysW,eq:sysq}, such that the initial data differ from the initial data of the background solution only on a compact set. Suppose also that $m_0(0)<2R_0/3$. Then, there are two positive constants, $C$ and $\lambda$, such that 
$$\norm{\partial_t\chi} + \norm{\partial_x \chi} < C (\norm{\partial_t\chi}(0) + \norm{\partial_x \chi}(0))e^{\lambda t} \ \forall t \geq 0$$
Furthermore, $\lambda$ depends on a bound on $m_1(0)$, and the constant $C$ depends on a bound on $\norm{\partial_t \chi(0)}$ and $\norm{\partial_x \chi (0)}$.
\end{proposition}
\begin{proof}
Motivated by the exercise 6.9 of Tao's book, \cite{tao2006nonlinear}, we use the equations $\nabla^a T_{ab} = 0$. In coordinates, we obtain a system equivalent to
$$\left\{\begin{array}{l}
\frac{1}{2}\partial_{s^-}A=-\left(\frac{R_t}{R}-\frac{R_x}{R}\right) \frac{1}{2}A-\left(\frac{R_t}{R}+\frac{R_x}{R}\right) \frac{\left(\left\|\partial_{t} \chi\right\|_h^2-\left\|\partial_x \chi\right\|_h^2\right)}{2} \\
\frac{1}{2}\partial_{s^+}B=-\left(\frac{R_t}{R}+\frac{R_x}{R}\right) \frac{1}{2}B -\left(\frac{R_t}{R}-\frac{R_x}{R}\right) \frac{\left(\left\|\partial_{t} \chi\right\|_h^2-\left\|\partial_x \chi\right\|_h^2\right)}{2}
\end{array}\right.,$$
where $A = \norm{\partial_t \chi + \partial_x \chi}_h^2 $, $B = \norm{\partial_t \chi - \partial_x \chi}_h^2$, $\partial_{s^-} = \partial_t-\partial_x$, and $\partial_{s^+} = \partial_t + \partial_x$. Using that $\norm{\partial_t \chi}^2 - \norm{\partial_x \chi}^2 = B + 2 \langle \partial_t \chi - \partial_x \chi, \partial_x \chi \rangle_h$, we obtain
$$\frac{1}{2}(\partial_t + \partial_x)B \leq -\frac{R_t}{R}B + \left(\frac{R_t}{R}-\frac{R_x}{R}\right)\sqrt{B}\norm{\partial_x \chi}_h.$$ 
Due to \ref{eq:RalphabyR_est} we know that there is a constant $D>0$ such that
$$\partial_{s^+} B \leq D B + D\sqrt{B}\norm{\partial_x \chi}_h \leq D B + D \norm{\partial_x \chi}_h^2,$$ 
Additionally, $D$ depends only on a bound on $m_1(0)$. Let $M>0$ be such that $\norm{\partial_t \chi}_h,\norm{\partial_x \chi}_h < M$ at $t=0$ and let $T$ be the supreme of times $T'$ such that $\norm{\partial_t \chi}_h(t),\norm{\partial_x \chi}_h(t) < K\cdot M \ \forall t < T'$ ($K=10$). Note that $\sqrt{B}(0,\cdot) < 2M$. For $ t \in [0,T)$ we have
$$\partial_{s^+} B\leq DB + D(KM)^2,$$
and hence 
$$B(t,\cdot) \leq M^2 [(4+K^2)e^{Dt}-K^2] \ \ \forall \ t \in [0,T].$$
Similarly,
$$A(t,\cdot) \leq M^2[(4+K^2)e^{Dt}-K^2] \ \ \forall \ t \in [0,T].$$
It follows that
$$\norm{\partial_t \chi}_h(t), \norm{\partial_x \chi}_h(t) \leq \frac{1}{2}(\sqrt{B}+\sqrt{A}) \leq M\sqrt{[(4+K^2)e^{Dt}-K^2]} \ \ \forall t \in [0,T].$$
This immediately implies $$T \geq \frac{1}{D}\log(\frac{2K^2}{4+K^2}) =: T^*.$$ We have proved that it takes at least $T^*$ for $\norm{\partial_t \chi}_h$ and $\norm{\partial_x \chi}_h$ to multiply its values by $K$. As $T^*$ is independent of $M$, this is exponential growth. To end, note that we can ignore the $h$ subscript in the norms. This is because, due to \Cref{prop:controlC0}, the factor $e^{-4W}$ in $h$ is bounded. Please note that this is the only place where we have used the hypothesis about the compactness of the initial data.
\end{proof}
\begin{reptheorem}{th:existencia_todo_tiempo}[Future long-time existence]\mbox{}
Consider initial data to the \cref{eq:sysR,eq:sysW,eq:sysq}, such that $R(0,\cdot)$, $W(0,\cdot)$, $q(0,\cdot)$ $\in C^2$ and $R_t(0,\cdot),W_t(0,\cdot),q_t(0,\cdot)$ $\in C^1$. Suppose additionally that $m_0(0)<2R_0/3$. Then there is a unique $C^2$ solution defined for all $t \geq 0$.
\end{reptheorem}
\begin{proof}
To tackle this problem we will use the previous estimates (\cref{prop:controlC0,prop:controlC1}). In order to use these propositions we need to have a solution such that:
\begin{enumerate}
\item It is $C^2$ and is defined in a maximal interval $[0,T)$.
\item $W-W_b$ and $q-q_b$ are of locally $x-$compact support.
\end{enumerate}

However, the solution to the given initial data will not meet the second requirement. Furthermore, at first we do not have a short time existence for this kind of initial data so we cannot guarantee the first requirement either. To circumvent this problem, we note that by a standard finite speed propagation argument it suffices to consider solutions whose initial data differs only on a compact set from the data of the background. Then we work with the functions $R-R_b$, $\Delta W = W-W_b$, and $\Delta q = q-q_b = q$. $R$ is obviously defined for all $t$. Fixed $R$, $\Delta W$ and $\Delta q$ satisfy the equations
\begin{gather}
\begin{aligned}\Delta W_{tt}-\Delta W_{xx} & + \frac{R_t}{R}\Delta W_t - \frac{R_x}{R}\Delta W_x + \frac{(\Delta q_t^2- \Delta q_x^2)}{2}e^{-4\Delta W}e^{-4W_b}  \\ &= (W_b)_x(\frac{{R}_x}{R}-\frac{{R_b}_x}{R_b}) =: r(t,x)
\end{aligned}
\label{eq:DeltaW}, \\
\Delta q_{tt}-\Delta q_{xx} + \frac{R_t}{R}\Delta q_t - \frac{R_x}{R}\Delta q_x - 4\Delta q_t\Delta W_t + 4\Delta q_x\Delta W_x + 4\Delta q_x {W_b}_x = 0 \label{eq:Deltaq},
\end{gather}

As we have compactly supported initial data for this system, it can be seen that we have $C^2$ short-time existence. For this, we use Theorem 18 from the course notes \textit{Non-linear wave equations} by Hans Ringstr\"om, with $u := (\Delta W, \Delta q)$. In fact, the theorem is stated for a source of the form $f(t,u,\partial u)$, where $u$ is the unknown, and also satisfying $f(t,0,0) = 0$. This is not our case, since the source is $r(t,x)$. However, it is easy to see that the results for 1+1-dimensional wave equations also holds for more general sources. It is enough to have a source of the form $f(t,x,u,\partial u)$, satisfying:
\begin{enumerate}[a.]
\item $f_b(t,x) := f(t,x,0,0)$ is of locally $x$-compact support.
\item For any multi-index $\alpha$, and any compact interval $I = [T_1,T_2]$, there is a function $h_{\alpha,I}:\R \to \R$, such that
$$\abs{\partial^\alpha f (t,x,u,\partial u)} \leq h_{\alpha,I}(\abs{u}+\abs{\partial u}).$$
\end{enumerate}
This indeed is the case for $r(t,x)$, and then we have $C^2$ short-time existence. Furthermore, due to the finite speed propagation property, in the interval of existence, the solutions are of locally $x$-compact support. In this way, $(W,q)$ satisfy the two requirements mentioned in the beginning of the proof, allowing us to use the previous estimates.  Consequently, fixed $T<\infty$, the $C^1$ norm of these functions remains bounded in $[0,T]$.
Using Theorem 19 from the same notes (again this is valid for this kind of source), this implies that $\norm{u}_{C^2} + \norm{\partial_t u}_{C^1}$ remains bounded in $[0,T]$. Due to Theorem 18, this implies the existence of a uniform time $\epsilon$, such that, for every $t \in [0,T]$, we can find a solution with initial data $u(t,\cdot), \partial_t u (t,\cdot)$, with existence time at least $\epsilon$. We conclude that we can extend the solution beyond any interval $[0,T]$, showing that it is defined for all $t\geq 0$.
\end{proof}

\section{Proof of the stability of compactly supported non-polarized perturbations}\label{sec:nonpol}
The purpose of this section is to study the stability for non-polarized perturbations, i.e., we allow our perturbations to have $q\neq 0$. We return to consider a smooth solution to the system formed by the \cref{eq:sysR,eq:sysW,eq:sysq}, whose initial data differs only on a compact set from the data of the background, \cref{eq:background,eq:background2,eq:background3,eq:background4}, and such that $m_0(0)<2R_0/3$. The goal is proving \Cref{cor:nonpol_comp_2}.

\subsection{A change of variables and the first order energy estimate.}
In this section, we derive the basic energy inequality from which exponential decay will follow. 
Consider the change of variable 
\begin{align}
z &= R^{1/2}(W-W_b), \\
v &= \tilde{R}^{1/2} q,
\end{align} where $\tilde{R} = Re^{-4W}$. Note that in the new variables the background solution is given by $z_b = 0$, $v_b = 0$, and $R_b = R_0 \cosh(2x)$. Moreover, this change yields the following system of PDEs for $z$ and $v$:  
\begin{align}
& z_{tt} - z_{xx} + z G + B = g, \label{eq:znpol} \\
& v_{tt} -v_{xx} + v(G+4{W_b}_x^2) + D = 0,  \label{eq:v}
\end{align}
where \small
$$\left\{\begin{array}{ll}
\begin{aligned} D &= 4v(\tilde{W}_x^2 - \tilde{W}_t^2) + 8v\tilde{W}_x{W_b}_x + 2v(q_x^2 - q_t^2)e^{-4W}, \end{aligned} & B = R^{1/2}\left(\frac{q_t^2-q_x^2}{2}\right)e^{-4W}, \\
\tilde{W} = W - W_b, &  G = \frac{R_t^2-R_x^2}{4R^2},
 \\
 g = R^{1/2}\left(\frac{R_x}{R} - \frac{{R_b}_x}{R_b}\right){W_b}_x & {W_b}_x = \frac{W_0}{\cosh(2x)^2} \\
\end{array}\right.$$
\normalsize
Note that the change for $v$ is different, using $\tilde{R}$ instead of $R$. The purpose of this is to make the equation for $v$ to look structurally similar to the equation of the polarized case, \cref{eq:z}. The system displayed is exactly the system of the polarized case plus products combining $z$, $v$, and their first order derivatives. The argument here is to treat this non-linearity as a small perturbation. Also observe that the system above is coupled only through the perturbative terms $B$ and $D$. With respect to the nonlinearity, after a careful inspection, we note that the important feature about its structure is that 
\begin{equation}
B,D =  \sum_{i=1}^n R^{-m_i/2}e^{-4\widetilde{W}j_i}F_i(u), 
\label{eq:str1}
\end{equation}
where $j_i,m_i \in \mathbb{N}_{\geq 1}$, $F_i:\mathbb{R}^{12}\to \mathbb{R}$ are smooth functions, and 
\begin{equation}
\begin{aligned}
u =& (\hat{u},r), \text{with } \hat{u} = (v_x,v_t,z_x,z_t,Gv,Gz,{W_b}_xv,{W_b}_x z) \text{ and } r=(\frac{R_t}{R},\frac{R_x}{R},v,z). \\
\end{aligned} 
\label{eq:str2}
\end{equation}
Moreover, the functions $F_i$ are smooth and satisfy:
\begin{align}
F_i(0,r) = 0 \ \forall r \in \mathbb{R}^4, \label{eq:str3}\\ 
\norm{\nabla F_i (u)}\leq C(1+\norm{u}^{^k}), \ \text{for some constant } C,\label{eq:str4}
\end{align}
where $\nabla$ denotes the gradient of $F_i$, and $k$ is a positive integer. It is important to remark that we will use each of the above properties to control the effect of the nonlinearity. It is not sufficient just to consider the \cref{eq:str1,eq:str4} along with the fact that $R \sim e^{2t}\cosh(2x)$. The \cref{eq:str3} amounts to having a particular combinations of derivatives that we need to make use of it. In particular, observe that $\norm{\hat{u}}_L \leq C\mathcal{E}_0^{1/2}$, with $C$ depending on a bound on $m_1(0)$. From now on, we will sometimes redefine k, in the same way as we have been doing with the constant C.

Our goal of finding exponential decay is translated, under this change of variable, to prove that $z$ and $v$ grow at most polynomially. The precise way in which the variables $z$ and $v$ are related to $W$ and $q$ is described in the following lemma.

\begin{proposition}[Change of variable]\label{lemma:passage_complete}
Let $R$ be a solution to the \cref{eq:sysR} satisfying $m_0(0)<2R_0/3$, and consider $k>0$. Let $W,q : (a,b) \times \R \to \R$ be two functions, and define $z:= R^{1/2}(W-W_b)$ and $v := e^{2W}R^{1/2}q$. Finally, consider $t \in (a,b)$. Then, 
\begin{enumerate}
    \item $z$ is of locally $x-$compact support if and only if $W-W_b$ is of locally $x-$compact support. The same holds for $v$ and $q$.
    \item $(z(t,\cdot),z_t(t,\cdot))$ belongs to $H^k\times H^{k-1}$ if and only if $((W-W_b)(t,\cdot),(W-W_b)_t(t,\cdot))$ belongs to $\widetilde{H}^k\times\widetilde{H}^{k-1}$. Furthermore,
    $$\begin{aligned}\frac{1}{C} e^{-t}(\norm{z}_{H^k} + \norm{z_t}_{H^{k-1}})(t) &\leq (\norm{W-W_b}_{\widetilde{H}^k} + \norm{(W-W_b)_t}_{\widetilde{H}^{k-1}})(t) \\ &\leq C e^{-t}(\norm{z}_{H^k} + \norm{z_t}_{H^{k-1}})(t), \end{aligned}$$
    where $\widetilde{H}^k$ is the Sobolev space given by $\|f\|_{\widetilde{H}_k}^2:=\sum_{i=0}^k \int_{\mathbb{R}}\left(f^{(i)}(x)\right)^2 \cosh (2 x) d x$, and $z_t = (R^{1/2})_t(W-W_b) + R^{1/2}(W-W_b)_t$. The constant $C$ depends only on a bound of $m_k(0)$ and $k$. 
    \item Suppose that $(z(t,\cdot),z_t(t,\cdot)) \in H^k \times H^{k-1}$, then the previous item holds for $v$ and $q$, instead of $z$ and $W-W_b$, with the difference that $C$ now also depends on $\norm{W-W_b}_{\widetilde{H}^k}(t)+\norm{{W-W_b}_t}_{\widetilde{H}^{k-1}}(t)$.
\end{enumerate}
\end{proposition}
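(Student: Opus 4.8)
The plan is to exploit that $z=R^{1/2}(W-W_b)$ and $v=e^{2W}R^{1/2}q$ are obtained from $W-W_b$ and $q$ by multiplication by the strictly positive smooth weights $R^{1/2}$ and $e^{2W}R^{1/2}$, whose magnitude is comparable to $e^{t}\cosh^{1/2}(2x)$ — because $R\sim R_b=R_0e^{2t}\cosh(2x)$ by \cref{eq:R_equiv} and, as will be checked, $W$ is bounded — and whose logarithmic $x$-derivatives are bounded to the future by the coefficient estimates of \cref{lemma:coef_est_2}. Concretely: (i) the support statements follow from positivity of the weights; (ii) the norm equivalences for $z$ follow by applying Leibniz to $\partial_x^j z$, dividing by $R^{1/2}$, and using \cref{lemma:coef_est_2}; (iii) the time-derivative terms are handled through $z_t=\tfrac{R_t}{2R}z+R^{1/2}(W-W_b)_t$ and the boundedness of $R_t/(2R)$ together with its $x$-derivatives; and (iv) the statement for $v$ is obtained by repeating (i)--(iii) with the weight $e^{2W}R^{1/2}$, the only new ingredient being control of the $x$-derivatives of $e^{\pm 2W}$. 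For the first claim: since $m_0(0)<2R_0/3$ we have $R>0$ on $(a,b)\times\R$, hence $R^{1/2}>0$ and $e^{2W}R^{1/2}>0$ pointwise, so $z(t,\cdot)$ and $(W-W_b)(t,\cdot)$ vanish at exactly the same points, and likewise for $v(t,\cdot)$ and $q(t,\cdot)$; thus local $x$-compact support transfers in both directions.

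For the second claim, the comparison $R\sim R_0e^{2t}\cosh(2x)$ gives at once $|z|^2\sim e^{2t}\cosh(2x)\,|W-W_b|^2$, i.e.\ the $k=0$ equivalence with the factor $e^{\pm t}$. For $k\geq1$, expand $\partial_x^j z=\sum_{i=0}^j\binom{j}{i}(\partial_x^{j-i}R^{1/2})\,\partial_x^i(W-W_b)$, divide by $R^{1/2}$, and bound each $\partial_x^{j-i}R^{1/2}/R^{1/2}$ via \cref{lemma:coef_est_2}; squaring and integrating in $dx$ gives $\|z\|_{H^k}\leq Ce^t\|W-W_b\|_{\widetilde H^k}$. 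The reverse inequality is obtained the same way from $W-W_b=R^{-1/2}z$, using that $\partial_x^j R^{-1/2}/R^{-1/2}$ is bounded to the future (again a consequence of \cref{lemma:coef_est_2} and $R\sim R_b$). The $z_t$-part uses the product rule $z_t=\tfrac{R_t}{2R}z+R^{1/2}(W-W_b)_t$ (and $(W-W_b)_t=-\tfrac{R_t}{2R}(W-W_b)+R^{-1/2}z_t$ for the reverse), the boundedness of $R_t/(2R)$ and its $x$-derivatives, and absorbs the resulting $\|W-W_b\|_{\widetilde H^{k-1}}$ term into $\|W-W_b\|_{\widetilde H^k}$. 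Inspecting the orders of the coefficient estimates invoked ($|\alpha|\leq k$ for the $R^{\pm1/2}$ factors, $|\alpha|\leq k-1$ for the $R_t/R$ factors) shows that all constants depend only on a bound on $m_k(0)$ and on $k$.

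For the third claim one follows the same recipe with $e^{2W}R^{1/2}$ in place of $R^{1/2}$; the only subtlety is that the weight now depends on the unknown. Because $k\geq1$ and $(z,z_t)\in H^k\times H^{k-1}$, the second claim already yields $\|W-W_b\|_{\widetilde H^k}(t)+\|(W-W_b)_t\|_{\widetilde H^{k-1}}(t)<\infty$, and the weighted embedding $|f(x)|\leq\sqrt2\,\|f\|_{\widetilde H^1}$ — which follows from $f(x)^2=2\int_{-\infty}^x ff'\,ds$ and $1/\cosh(2x)\leq\cosh(2x)$ — together with the explicit boundedness of $W_b$ shows that $W$ is bounded, so that $e^{\pm2W}$ is bounded above and below. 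To run the Leibniz argument one must additionally bound the $x$-derivatives of $e^{2W}$ up to order $k$; this is done by Moser-type product and chain estimates in the weighted spaces $\widetilde H^j$ (which behave like an algebra for $j\geq1$ thanks to the embedding just quoted), in terms of $\|W-W_b\|_{\widetilde H^k}$, $\|(W-W_b)_t\|_{\widetilde H^{k-1}}$ and the fixed function $W_b$. Feeding these bounds into the computation of the second claim gives the equivalence for $v$ and $q$, now with a constant that also depends on $\|W-W_b\|_{\widetilde H^k}(t)+\|(W-W_b)_t\|_{\widetilde H^{k-1}}(t)$.

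The hard part will be this last step. For $z$ the multiplier $R^{1/2}$ behaves like a fixed background coefficient whose $x$-derivatives are a priori controlled by \cref{lemma:coef_est_2}, whereas for $v$ the multiplier $e^{2W}$ is a nonlinear function of the solution, so the product and commutator estimates have to be carried out inside the weighted spaces $\widetilde H^j$ — where $L^\infty$ control of the logarithmic derivatives of the weight is no longer available for the smallest admissible values of $k$ — while keeping the dependence of the constants on $\|W-W_b\|_{\widetilde H^k}$ and $\|(W-W_b)_t\|_{\widetilde H^{k-1}}$ explicit. This is precisely what makes the constant in the third claim worse than in the second, everything else being bookkeeping built from \cref{lemma:coef_est_2} and $R\sim R_b$.
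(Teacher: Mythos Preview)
The paper states this proposition without proof (it is introduced as a technical change-of-variable lemma and the text moves on immediately to defining the energies $\mathcal{A},\mathcal{E}_0,\ldots$), so there is no argument in the paper to compare against line by line. Your approach is the natural one and is essentially what the authors have in mind: part (1) is immediate from $R^{1/2}>0$ and $e^{2W}R^{1/2}>0$; part (2) follows from the Leibniz expansion of $\partial_x^j z$ and $\partial_x^j(R^{-1/2}z)$ together with the boundedness of $\partial^\alpha R^{\pm1/2}/R^{\pm1/2}$ and $\partial^\alpha(R_t/R)$ given by \cref{lemma:coef_est_2}, the basic comparison $R\sim R_b=R_0e^{2t}\cosh(2x)$ supplying the $e^{\pm t}$ and the $\cosh(2x)$ weight; part (3) repeats this with the extra factor $e^{2W}$, whose $L^\infty$ control comes from the weighted embedding $\|f\|_{L^\infty}\lesssim\|f\|_{\widetilde H^1}$ and whose derivatives are handled by product estimates in $\widetilde H^j$, which is why the constant picks up a dependence on $\|W-W_b\|_{\widetilde H^k}+\|(W-W_b)_t\|_{\widetilde H^{k-1}}$.

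One small remark: in part (3) for the lowest value $k=1$ you cannot place $\partial_x(W-W_b)$ in $L^\infty$, but you do not need to---it suffices to put $q$ (or $v$) in $L^\infty$ via the same weighted embedding and keep $(W-W_b)_x$ in $\widetilde H^0$; your ``Moser-type'' phrasing covers this, but it is worth saying explicitly since that is exactly the mechanism producing the extra dependence of $C$ asserted in the statement. With that clarification your proof is complete and matches what the paper leaves implicit.
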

In order to achieve the desired estimates we define the following energies,
\begin{equation}
\mathcal{A}(t) := \frac{1}{2}\int_\R z^2 + v^2 \ dx,
\end{equation}
\begin{equation}
\mathcal{E}_0(t) := E[z,v] := \frac{1}{2}\int_\R z_x^2 + z_t^2 + z^2G_b \ dx + \frac{1}{2} \int_\R v_x^2 + v_t^2 + v^2(G_b + 4{W_b}_x^2) \ dx,
\end{equation}
\begin{equation}
E_1(t) := E[z_t,v_t] + E[z,v], \  \ \mathcal{E}_1(t) := E[z_x,v_x] + E[z,v].
\end{equation}
Recall that $G_b = \frac{1}{\cosh^2(2x)} $ and ${W_b}_x^2 = \frac{W_0^2}{\cosh^2(2x)}$, so all the energies defined above are positive definite. Our goal is to prove \Cref{th:clave_no_pol}, which gives the bound
\begin{equation}
\mathcal{E}_1^{1/2} < C(\mathcal{E}_1^{1/2}(0)+ m_2(0)) \ \ \forall t \geq 0, \\ 
\end{equation} for sufficiently small $\mathcal{E}_1(0)$ and $m_2(0)$. This will be achieved by a series of lemmas. \Cref{lema:E_1_evo} establishes a bound for $\dot{E}_1$. The bound is the same appearing in the polarized case plus a sum due to the perturbative terms. The next two lemmas are intended to deal with this sum. They will provide, under certain conditions, a kind of equivalence between $E_1$ and $\mathcal{E}_1$. This will allow us to change the factor $\mathcal{E}_1^{n_i/2}$ in the sum by $E_1^{n_i/2}$. This will imply that $E_1$ is bounded, and consequently $\mathcal{E}_1$ will also be bounded, as long as $E_1(0)$ and $\mathcal{E}_1(0)$ are sufficiently small.

The outline is similar to that of the polarized case. As before, we need to use the estimates for time derivatives to get control of $\mathcal{E}_1$. The reader should think of \Cref{lema:E_1_evo} as \Cref{prop:Et_estimate}, the difference being that, due to the factor $\mathcal{E}_1$ in the right-hand side of the \cref{eq:E_1_evo}, \Cref{lema:E_1_evo} almost controls $E_1$. In the same way, the reader should think of \Cref{lemma:x_by_t,asdf} as \Cref{lemma:key_formula}, serving as a passage between time and spatial energies, this time being more subtle than before. 

\begin{lemma}\label{lema:E_1_evo}
For every solution to the system given by \cref{eq:sysR,eq:znpol,eq:v}, whose initial data differs only on a compact set from that of the background solution, and such that $m_0(0)<2R_0/3$, there is a constant $C>0$, depending only on a bound on $m_2(0)$ and on $\mathcal{E}_0(0) + \mathcal{A}(0)$, such that
\begin{align}
&\dot{\mathcal{A}} \leq 2\mathcal{A}^{1/2}\mathcal{E}_0^{1/2}, \label{eq:A_evo}\\
&\begin{aligned}
\dot{\mathcal{E}_1} & \leq  \ C(t+1)e^{-2t}E_1 + C(t+1)e^{-t}m_2(0)E_1^{1/2}\\ & + Ce^{-t}E_1(1+ \mathcal{E}_1^{k/2}+\mathcal{A}^{k/2}). \label{eq:E_1_evo}
\end{aligned}
\end{align}
\end{lemma}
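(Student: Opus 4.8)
The proof of Lemma~\ref{lema:E_1_evo} should follow the same blueprint as the proof of \Cref{prop:basic_energy_inequality}, applied now to the coupled system \cref{eq:sysR,eq:znpol,eq:v} and to the higher energy $E_1 = E[z_t,v_t] + E[z,v]$, so I would organize it as follows.

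\textbf{Step 1: the equation $\dot{\mathcal A}\le 2\mathcal A^{1/2}\mathcal E_0^{1/2}$.} This is immediate: differentiating $\mathcal A = \tfrac12\int z^2+v^2$ under the integral sign (legitimate since $(z,v)$ is of locally $x$-compact support) gives $\dot{\mathcal A} = \int z z_t + v v_t\,dx \le \|(z,v)\|_{L^2}\|(z_t,v_t)\|_{L^2}\le 2\mathcal A^{1/2}\mathcal E_0^{1/2}$, using that both $\int z_t^2$ and $\int v_t^2$ are bounded by $2\mathcal E_0$.

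\textbf{Step 2: differentiating $E_1$ and plugging in the equations.} Write $E_1 = E[z,v] + E[z_t,v_t]$. For the $E[z,v]$ part the computation is essentially that of \Cref{prop:basic_energy_inequality} done simultaneously for $z$ (eq.~\cref{eq:znpol}) and $v$ (eq.~\cref{eq:v}): after integrating by parts one is left with $\int z_t(z G_b - zG - B + g) + \int v_t(v(G_b+4W_{b,x}^2) - v(G+4W_{b,x}^2) - D)\,dx$. The terms involving $G_b-G$ and $g$ are controlled exactly as before by \Cref{lemma:coeff_estimates} (the $W_{b,x}^2$ terms cancel), yielding the first two summands $C(t+1)e^{-2t}E_1 + C(t+1)e^{-t}m_2(0)E_1^{1/2}$; the genuinely new contributions are $-\int z_t B - \int v_t D$. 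For the $E[z_t,v_t]$ part one first differentiates \cref{eq:znpol,eq:v} in $t$ to get equations for $z_t,v_t$ of the form $(z_t)_{tt}-(z_t)_{xx}+z_t G_b = z_t(G_b-G) - z\,\partial_t G - B_t + g_t$ and similarly for $v_t$, then repeats the energy computation. The coefficient-type terms ($z\partial_t G$, $g_t$, etc.) are handled by \Cref{lemma:coef_est_2} and contribute again to the first two summands (here is where the bound needs $m_2(0)$ rather than $m_1(0)$, since one differentiates once more); the new terms are $\int z_t B_t$, $\int v_t D_t$ and $\int (z_t)_t B$-type pieces.

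\textbf{Step 3: estimating the perturbative terms $B$, $D$ and their $t$-derivatives.} This is the heart of the matter and the step I expect to be the main obstacle. One inspects the explicit formulas: $B = R^{1/2}\tfrac{q_t^2-q_x^2}{2}e^{-4W}$ is quadratic in $(q_t,q_x)$, and via the substitution formulas in the displayed system $q_t,q_x$ are expressed through $v_t,v_x,z_t,z_x$ and $v,z$ themselves, each accompanied by bounded coefficients (using $R\sim R_b$, the bounds on $R_t/R, R_x/R$, the boundedness of $e^{-4W}$ from the $C^0$ estimate, and the bound $|W_{b,x}|\le C/\cosh(2x)$). So $B$ is schematically a sum of products of two ``first-order'' factors drawn from $\{z_t,z_x,v_t,v_x\}$ together with lower-order pieces $\{z,v\}$ weighted by $\cosh(2x)^{-1}$. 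Likewise $D$ is a sum of such terms. Pairing against $z_t$ (respectively $v_t$) and using Cauchy--Schwarz one gets, for each monomial, a factor $E_1^{1/2}$ from $z_t$, a factor $\mathcal E_1^{1/2}$ (or $E_1^{1/2}$, or $\mathcal A^{1/2}$) from each of the two first-order factors, and an $\mathcal A^{1/2}$ from each lower-order factor — but since the integrand is cubic (or higher) one must borrow an $L^\infty$ bound on one factor to close. Here one uses Sobolev/Agmon in one spatial dimension: $\|f\|_{L^\infty}\le C\|f\|_{H^1}\le C\,(\text{an appropriate energy})^{1/2}$, with the $\cosh(2x)^{-1}$ weights ensuring the weighted and unweighted energies are comparable on the relevant terms. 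Tracking which factor gets the $L^\infty$ and which gets the $L^2$, every term takes the form $C e^{-t} E_1 \,\mathcal E_1^{n_i/2}\mathcal A^{m_i/2}$ for suitable nonnegative integers $n_i,m_i$; the $e^{-t}$ decay comes from the $R^{1/2}\sim R_0^{1/2}e^t\cosh^{1/2}(2x)$ prefactor combined with the three (or more) factors of $e^{-t/2}$ or $e^{-t}$ produced when one converts the $(W,q)$-derivatives appearing in $B,D$ back to $(z,v)$-derivatives — more precisely, $\tilde W_t = R^{-1/2}(z_t - \tfrac{R_t}{2R}z)$ carries $R^{-1/2}\sim e^{-t}\cosh^{-1/2}(2x)$, and the weights assemble to leave exactly one net $e^{-t}$ after accounting for the $G_b,W_{b,x}^2$ weights hidden in $\mathcal E_1$. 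For $B_t,D_t$ one differentiates these monomials; each $t$-derivative either hits a coefficient (harmless, stays bounded and decaying) or raises a first-order factor to a second-order one, which is again inside $E_1$, so the same structure persists, possibly with larger $n_i$.

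\textbf{Step 4: assembling.} Collecting Steps 2--3 gives exactly \cref{eq:E_1_evo}, with the constant $C$ depending on the bound on $m_2(0)$ used throughout \Cref{lemma:coeff_estimates,lemma:coef_est_2}, on $\mathcal E_0(0)+\mathcal A(0)$ through the $C^0$ bound on $(W,q)$ from the $C^0$ estimate proposition (which is what makes $e^{-4W}$ and $e^{2W}$ bounded), and on $k,m,n$. Finally \cref{eq:A_evo} is Step 1. The subtle point to get right is the exact powers $n_i$, $m_i$ — one does not need them sharp, only that they are finite and that every perturbative term genuinely carries the prefactor $Ce^{-t}E_1$ (not merely $E_1^{1/2}$ or $E_1$ without decay), which is what later lemmas will exploit to close the bootstrap under smallness of $\mathcal E_1(0)$ and $m_2(0)$.
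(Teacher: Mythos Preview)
Your proposal is correct and follows essentially the same approach as the paper: differentiate $\mathcal E_0$ and $E[z_t,v_t]$, isolate the ``polarized'' pieces controlled as in \Cref{prop:basic_energy_inequality} via \Cref{lemma:coeff_estimates,lemma:coef_est_2}, and then handle the perturbative terms $B,D,B_t,D_t$ by decomposing them into monomials carrying an $R^{-m/2}$ prefactor, applying Cauchy--Schwarz on the highest-order factor together with the main derivative, and Sobolev ($H^1\hookrightarrow L^\infty$) on the remaining factors to produce the $\mathcal E_1^{n_i/2}\mathcal A^{m_i/2}$ structure. One small slip in your Step~2: in $\dot E[z_t,v_t]$ the ``main derivative'' that pairs against $B_t$ and $D_t$ is $z_{tt}$ (resp.\ $v_{tt}$), not $z_t$; this is what makes the Cauchy--Schwarz step yield a full factor of $E_1$ rather than only $\mathcal E_0$, and your Step~3 implicitly uses this correctly even though Step~2 misstates it.
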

\begin{proof}
Since we are working with solutions of locally $x-$compact support, we can differentiate inside the integral. Doing this with $\mathcal{A}$ and using Cauchy-Schwarz yields 
$$\dot{\mathcal{A}} = \int_\R zz_t + vv_t\ dx \leq 2\mathcal{A}^{1/2}\mathcal{E}^{1/2}_0.$$
Now let's compute $\dot{\mathcal{E}_0}$. Deriving under the integral and using parts
$$
\begin{aligned}
\dot{\mathcal{E}}_0 &= \int_\R z_t( z_{tt} - z_{xx} + z G_b) + v_t(v_{tt} - v_{xx} + v(G_b + 4{W_b}_x^2)) \ dx \\
&= \int_\R \underbrace{z_t[z(G_b-G) + g]}_{A} - z_t B + \underbrace{v_t[v(G_b -G)]}_{C} - v_tD \ dx .\\
\end{aligned}
$$
The terms $A$ and $C$ are controlled as in the polarized case:
$$\int_\R A+C \ dx \leq C(t+1)e^{-2t}\mathcal{E}_0 + C(t+1)e^{-t}m_1(0)\mathcal{E}_0^{1/2},$$
where $C$ depends on a bound on $m_1(0)$. The new thing here is to bound $\int_\R z_t B + v_t D \ dx$. Due to the \cref{eq:str1,eq:str2,eq:str3}, without lost of generality, it suffices to deal with a term of the form
$$\int_\R R^{-m/2}e^{-4\tilde{W}j}z_tF(u) \ dx \ \ \ \text{or} \ \ \ \int_\R R^{-m/2}e^{-4\tilde{W}j}v_tF(u),$$
where $m,j\geq 1$. In any case, using Cauchy-Schwarz, $R^{-m/2}\leq Ce^{-t/2}$ (\cref{eq:R_equiv}), and the fact that $\widetilde{W}$ is bounded (\cref{prop:controlC0}), these integrals are bounded by $Ce^{-t}\mathcal{E}_0^{1/2}\norm{F(u)}_{L^2}$, where $C$ depends on a bound on $m_1(0)$ and on $\mathcal{E}_0^{1/2}(0)+\mathcal{A}^{1/2}(0)$. Lastly,
$$F(u) = F(\hat{u},r) = F(\hat{u},r) - F(0,r) = \int_0^1 \nabla_{\hat{u}}F(tu)\ dt \cdot \hat{u}, $$
and then 
\begin{equation}
\abs{F(u(x,t))} \leq C(1+\norm{u}_\infty^k(t) )\norm{\hat{u}(x,t)}. \label{eq:Fnorm}
\end{equation} 
Now to control $\norm{u}_\infty$, first note that, by Sobolev embedding, $\norm{v}_\infty,\norm{z}_\infty, \norm{v_t}_\infty$,$\norm{v_x}_\infty$, \\$\norm{z_t}_\infty$,$\norm{z_x}_\infty$ $\leq C(\mathcal{A}^{1/2}+\mathcal{E}_1^{1/2})$. Secondly, $\abs{R_t/R}, \abs{R_x/R}, \abs{G}, \abs{{W_b}_x} < C$, where $C$ depends on a bound on $m_1(0)$. Lastly, note that $\norm{\hat{u}}_{L^2} \leq C\mathcal{E}_0^{1/2}$, and then 
\begin{equation}
\norm{F(u)}_{L^2} \leq C(1 + \mathcal{A}^{k/2}+\mathcal{E}_1^{k/2})\mathcal{E}_0^{1/2}, \label{eq:FL2}
\end{equation}
where $C$ depends on a bound on $m_1(0)$ and on $\mathcal{E}_0^{1/2}(0)+\mathcal{A}^{1/2}(0)$.
Putting it all together, and using $\mathcal{E}_0 \leq E_1$, we have found
\begin{equation}
\begin{aligned}
\dot{\mathcal{E}_0} & \leq  \ C(t+1)e^{-2t}E_1 + C(t+1)e^{-t}m_1(0)E_1^{1/2}\\ & + \sum_{i} Ce^{-t}E_1(1+ \mathcal{E}_1^{k/2}+\mathcal{A}^{k/2}) \leq CE_1(1+\mathcal{E}_1^{k/2}+\mathcal{A}^{k/2}),
\end{aligned}
\label{eq:first_bound}
\end{equation}
where again $C$ depends on a bound on $m_1(0)$ and on $\mathcal{E}_0^{1/2}(0)+\mathcal{A}^{1/2}(0)$.
Now, in order to obtain an estimate for $\dot{E}_1$ we will bound $\dot{E}[z_t,q_t]$. Deriving under the integral and using parts
$$
\begin{aligned}
\dot{E}[z_t,q_t] &= \int_\R z_{tt}( (z_t)_{tt} - (z_t)_{xx} + (z_t) G_b) + v_{tt}((v_t)_{tt} - (v_t)_{xx} + v_t(G_b + 4{W_b}_x^2)) \ dx \\
\end{aligned}
$$
Now deriving the system, \cref{eq:znpol,eq:v}, respect to $t$ we find that 
$$\begin{aligned}
& (z_t)_{tt} - (z_t)_{xx} + z_t G + G_t z +  B' = g_t,  \\
& \begin{aligned} & (v_t)_{tt} -(v_t)_{xx} + v_t(G+4{W_b}_x^2)  + vG_t + D' = 0. \end{aligned} 
\end{aligned}$$
Therefore
$$
\begin{aligned}
\dot{E}[z_t,q_t] &= \int_\R \underbrace{z_{tt}[z_t(G_b-G) + G_t z +g_t]}_{F} - z_{tt}B' + \underbrace{v_{tt}[v_t(G_b -G)+ G_t v]}_{G} - v_{tt}D' \ dx. \\
\end{aligned}
$$
Again, the terms $F$ and $G$ are controlled as in the polarized case, yielding
$$\int_\R F+G \ dx \leq C(t+1)e^{-2t}E_1 + C(t+1)e^{-t}m_2(0)E_1^{1/2},$$
where $C$ depends on a bound on $m_2(0)$. Now we have to control $\int_\R z_{tt}B' + v_{tt}D'$. When we differentiate $B$ and $D$, we find (see the \cref{eq:str1,eq:str2,eq:str3})
\begin{equation}
\begin{aligned}
B',D' &= \sum_i \underbrace{(R^{-m_i/2})_t e^{-4\widetilde{W}j_i}F_i(u)}_{(I)} - \underbrace{4j_i R^{-m_i/2}e^{-4\widetilde{W}j_i} \widetilde{W}_t F_i(u)}_{(II)} \\ &+  R^{-m_i/2}e^{-4\widetilde{W}j_i}\underbrace{\nabla F_i(u)\cdot (\partial_t \hat{u}, \partial_t(\frac{R_t}{R}), \partial_t(\frac{R_x}{R}), \partial_t v, \partial_t z)}_{(III)}.\\
\end{aligned}
\label{eq:DifB}
\end{equation}
Note that $\abs{(R^{-m_i/2})_t} =  \abs{-m_i \tfrac{R_t}{R} R^{-m_i/2}} \leq C R^{-m_i/2}$, and that $\widetilde{W}_tF_i = R^{-1/2}(z_t - \frac{R_t}{2R}z)F_i =: R^{-1/2}H_i$, with $H_i$ satisfying the \cref{eq:str3,eq:str4}. For this reason, the terms produced  by $(I)$ and $(II)$ are controlled in the same way as above. Now, regarding $(II)$, observe that it can be split in two terms of the form 
$$\nabla_{\hat{u}}F(u)\cdot \partial_t \hat{u} + \nabla_r F(u)\cdot \partial_t r.
$$
Regarding the right term, if we call $G(\hat{u},r') := \nabla_r F(u)\cdot\partial_t r$, with $r':= (r,\partial_t r)$, then $G$ satisfies the \cref{eq:str2,eq:str3,eq:str4} but with $r'$ instead of $r$. Therefore, we can follow the same reasoning to obtain, maybe up to redefining $k$, $\norm{G(\hat{u},r')}_{L^2} \leq C(1+\mathcal{A}^{k/2}+ \mathcal{E}_1^{k/2})\mathcal{E}_0^{1/2}$. The difference is that now $C$ depends on a bound on $m_2(0)$ and on $\mathcal{A}^{1/2}(0)+\mathcal{E}_0^{1/2}(0)$. Now, let us control the left term. Observe that 
$$\abs{\nabla_{\hat{u}} F(u)\cdot\partial_t \hat{u}} \leq C\norm{\nabla_{\hat{u}} F(u)}\norm{\partial_t \hat{u}}\leq C(1+\norm{u}_\infty^{k})\norm{\partial_t \hat{u}},$$ 
and then 
$$\norm{\nabla_{\hat{u}}F(u)\cdot \partial_t \hat{u}}_{L^2}\leq C(1+\mathcal{A}^{k/2}+\mathcal{E}_1^{k/2})\norm{\partial_t \hat{u}}_{L^2}\leq C(1+\mathcal{A}^{k/2}+\mathcal{E}_1^{k/2})E_1^{1/2},$$
where we have used that $\norm{\partial_t \hat{u}}\leq CE_1^{1/2}$. With the usual bound for $R^{-m_i/2}$, we have $\abs{\int_\R z_{tt}B' + v_{tt} D' \ dx} \leq Ce^{-t/2}(1+\mathcal{A}^{K/2}+\mathcal{E}_1^{k/2})E_1$. Putting everything together,
\begin{equation}
\begin{aligned}
\dot{\mathcal{E}_1} & \leq  \ C(t+1)e^{-2t}E_1 + C(t+1)e^{-t}m_2(0)E_1^{1/2}\\ & + Ce^{-t}E_1(1+ \mathcal{E}_1^{k/2}+\mathcal{A}^{k/2}).
\end{aligned}
\label{eq:seconda_bound}
\end{equation}
\end{proof}
\begin{lemma}\label{lemma:x_by_t}
If during an interval of time $[0,T]$, we have a solution with $\abs{z_x},\abs{v_x} <1$, whose initial data differs only on a compact set from that of the background solution, and such that $m_0(0)<2R_0/3$, then, there is a constant $C>0$, depending only on a bound on $m_1(0)$ and on $\mathcal{E}_0(0)+\mathcal{A}(0)$, such that
\begin{equation}
\mathcal{E}_1 \leq CE_1 + C(t+1)^{2}e^{-2t}m_1(0)^2 + Ce^{-2t}E_1(\mathcal{A}^{k}+E_1^k) \ \text{ for } t \in [0,T].
\end{equation}
\end{lemma}
\begin{proof}
We know that $\mathcal{E}_1 = E[z,v] + E[z_x,v_x] \leq E_1 + E[z_x,v_x]$ so in order to bound $\mathcal{E}_1$ with $E_1$ we need to control $E[z_x,v_x]$ by $E_1$. Note that
$$
\begin{aligned}
E[z_x,v_x] &= \int z_{xx}^2 + z_{xt}^2 + z_{x}^2G_b + v_{xx}^2 + v_{xt}^2 + v_{x}^2(G_b + 4{W_b}_x^2)\ dx \\ & \leq \int_\R z_{xx}^2 + v_{xx}^2 + CE_1. \\
\end{aligned}
$$ 
Now, using the equation, 
$$
\begin{aligned}
& z_{xx}^2 \leq C (z_{tt}^2 + z^2G^2 + g^2 + B^2), \\
& w_{xx}^2 \leq C (w_{tt}^2 + w^2G^2 + D^2),  \\
\end{aligned}$$
so 
$$\begin{aligned}
\int_\R z_{xx}^2 + w_{xx}^2 &\leq C E_1 + \int_\R g^2 \ dx + \int_\R B^2 + D^2 \ dx \\
\leq & \ CE_1 + C(t+1)^2e^{-2t} m_1(0)^2 + \int_\R B^2 + D^2 \ dx . 
\end{aligned}
$$
So now we need to control the last integral by $E_1$. In order to do this, we will use the hypothesis that $\abs{z_x},\abs{v_x} <1$. Now, remembering the general form of $B$ and $D$ (\cref{eq:str1,eq:str2,eq:str3,eq:str4}), it follows that 
$$
\begin{aligned}
\int B^2+D^2 \ dx &\leq C \sum_i \int R^{-m_i}e^{-8\tilde{W}j_i}F^2(u) \ dx \leq C e^{-t}\sum_i \norm{F_i(u)}_{L^2}^2 \\ &\leq Ce^{-t}(1+\norm{u}_\infty^{2k})\norm{\hat{u}}_{L^2}^2 \leq Ce^{-t}(1+\norm{u}_\infty^{2k})E_1,
\end{aligned}$$
where we have grouped the summands and used the \cref{eq:Fnorm}. Now, for $\norm{u}_\infty$, for the terms with $\norm{z}_\infty,\norm{v}_\infty, \norm{z_t}_\infty, \norm{v_t}_\infty$, by Sobolev embedding, they are bounded by $C(\mathcal{A}^{1/2}+E_1^{1/2})$. Regarding the terms $\norm{v_x}_\infty,\norm{z_x}_\infty$, just use they are less than $1$ by hypothesis. Then $\norm{u}_\infty \leq C(1+\mathcal{A}^{1/2}+E_1^{1/2})$. Putting it all together
$$
\mathcal{E}_1 \leq CE_1 + C(t+1)^{2}e^{-2t}m_1(0)^2 + Ce^{-t}E_1(1+\mathcal{A}^{k}+E_1^k) \ \text{ for } t \in [0,T].$$
\end{proof}
\begin{lemma}\label{asdf}
For every solution whose initial data differs only on a compact set from that of the background solution, and such that $m_0(0)<2R_0/3$, there is a constant $C>0$, depending only on a bound on $m_1(0)$ and $\mathcal{E}_0(0)+\mathcal{A}(0)$, such that
\begin{equation}
E_1 \leq C\mathcal{E}_1 + C(t+1)^{2}e^{-2t}m_1(0)^2 + Ce^{-2t}\mathcal{E}_1(\mathcal{A}^{k}+\mathcal{E}_1^k),
\end{equation}
in particular, evaluating at $t=0$,
\begin{equation}
E_1(0) \leq C(\mathcal{E}_1(0) + m_1^2(0)), 
\label{eq:rereAa}
\end{equation}
and now the constant also depends on a bound on $\mathcal{A}(0) + \mathcal{E}_1(0)$.
\end{lemma}
\begin{proof}
Notice that this is the inequality of the previous lemma but with $E_1$ and $\mathcal{E}_1$ reversed. Following the same argument leads to the need to control $\int B^2 + D^2 \ dx$ by $\mathcal{E}_1$. This control is achieved by the \cref{eq:FL2}.
\end{proof}
\begin{theorem}\label{th:clave_no_pol}
There is some $\delta>0$, such that, for every solution to the system, whose initial data differs only on a compact set from that of the background solution, and such that $\mathcal{E}_1^{1/2}(0),m_2(0)<\delta$, there is a constant $C$, such that 
\begin{equation}
E_1^{1/2} < C(E_1^{1/2}(0)+ m_2(0)) \ \ \forall t \geq 0,
\label{eq:Bound1}
\end{equation}
and
\begin{equation}
\mathcal{E}_1^{1/2} < C(\mathcal{E}_1^{1/2}(0)+ m_2(0)) \ \ \forall t \geq 0, \\ 
\end{equation}
Furthermore, the constant just depends on a bound on $m_2(0),\mathcal{E}_1(0)$ and on $\mathcal{A}^{1/2}(0)$.
\end{theorem}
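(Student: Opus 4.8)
The plan is a continuity (bootstrap) argument whose only state variable is $E_1$, resting on the three lemmas just proved: \Cref{lema:E_1_evo} supplies the differential inequality for $E_1$; \Cref{lemma:x_by_t} converts a bound on $E_1$ into a bound on $\mathcal{E}_1$ (the quantity we ultimately want, and the one that feeds Sobolev embedding through $\norm{z_x}_\infty,\norm{v_x}_\infty\le C\mathcal{E}_1^{1/2}$); and \Cref{asdf}, via \eqref{eq:rereAa}, gives $E_1(0)\le C(\mathcal{E}_1(0)+m_2^2(0))$, so $E_1(0)$ is as small as we wish once $\mathcal{E}_1^{1/2}(0),m_2(0)<\delta$. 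Note that, taking $\delta<2R_0/3$, one has $m_0(0)\le m_2(0)<2R_0/3$, so \Cref{th:existencia_todo_tiempo} already guarantees the solution is defined for all $t\ge 0$ and the bootstrap can run on $[0,\infty)$.

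Fix a threshold $M$ (to be chosen in terms of the a priori bounds on $m_2(0),\mathcal{E}_1(0)$ and on $\mathcal{A}^{1/2}(0)$), and let $T$ be the supremum of times such that on $[0,T)$ both $\abs{z_x},\abs{v_x}<1$ and $E_1\le M$. At $t=0$, Sobolev embedding gives $\norm{z_x}_\infty,\norm{v_x}_\infty\le C\mathcal{E}_1^{1/2}(0)<C\delta$ and \eqref{eq:rereAa} gives $E_1(0)\le C(\mathcal{E}_1(0)+m_2^2(0))$, so both hold strictly at $t=0$ and $T>0$ for $\delta$ small. On $[0,T)$ three facts combine. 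First, from \eqref{eq:A_evo}, $\dot{\mathcal{A}}\le 2\mathcal{A}^{1/2}\mathcal{E}_0^{1/2}\le 2M^{1/2}\mathcal{A}^{1/2}$, hence $\mathcal{A}^{1/2}(t)\le \mathcal{A}^{1/2}(0)+M^{1/2}t$: the $L^2$-energy $\mathcal{A}$ is not controlled by $\mathcal{E}_1$, but grows at most polynomially. Second, since $\abs{z_x},\abs{v_x}<1$, \Cref{lemma:x_by_t} applies, and inserting $E_1\le M$ and the bound on $\mathcal{A}$ (each $e^{-t}\mathcal{A}^{m_i/2}\le e^{-t}(\mathcal{A}^{1/2}(0)+M^{1/2}t)^{m_i}$ is bounded) gives $\mathcal{E}_1\le N$ on $[0,T)$, with $N$ depending only on $M$, $\mathcal{A}^{1/2}(0)$ and the bound on $m_2(0)$. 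Third, feeding $\mathcal{E}_1\le N$ and $\mathcal{A}^{1/2}\le \mathcal{A}^{1/2}(0)+M^{1/2}t$ into \eqref{eq:E_1_evo}, the nonlinear sum $\sum_i Ce^{-t}E_1\mathcal{E}_1^{n_i/2}\mathcal{A}^{m_i/2}$ becomes $\le \varphi(t)E_1$ with $\varphi(t)=Ce^{-t}\,\mathrm{poly}(t)$ integrable on $[0,\infty)$ — this is the crux: the factor $e^{-t}$ defeats the polynomial growth of $\mathcal{A}$. Thus $\dot E_1\le\Phi(t)E_1+C(t+1)e^{-t}m_2(0)E_1^{1/2}$ with $\Phi(t)=C(t+1)e^{-2t}+\varphi(t)$ and $\int_0^\infty\Phi<\infty$.

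Writing $y=E_1^{1/2}$ this reads $\dot y\le \tfrac12\Phi y+\tfrac12 C(t+1)e^{-t}m_2(0)$, and Gr\"onwall yields, on $[0,T)$,
\begin{equation*}
E_1^{1/2}(t)\le e^{\frac12\int_0^\infty\Phi}\Bigl(E_1^{1/2}(0)+C\,m_2(0)\Bigr)=:C_\ast\bigl(E_1^{1/2}(0)+m_2(0)\bigr),
\end{equation*}
with $C_\ast$ depending only on $M$, $\mathcal{A}^{1/2}(0)$ and the a priori bounds on $m_2(0),\mathcal{E}_1(0)$. By \eqref{eq:rereAa} the right side is $\le C_\ast'(\mathcal{E}_1^{1/2}(0)+m_2(0))\le 2C_\ast'\delta$ on $[0,T)$; choosing $M$ (in terms of those same bounds) so that $2C_\ast'\delta<M^{1/2}$ and then $\delta$ small enough that in addition the resulting $\mathcal{E}_1$-bound forces $\abs{z_x},\abs{v_x}<1$, both defining conditions of the bootstrap improve strictly on $[0,T)$, so by continuity $T$ cannot be finite. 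Hence $E_1^{1/2}(t)\le C_\ast(E_1^{1/2}(0)+m_2(0))$ for all $t\ge0$, which is \eqref{eq:Bound1}. Finally, inserting this global bound (and the ensuing global polynomial bound on $\mathcal{A}^{1/2}$) back into \Cref{lemma:x_by_t} gives $\mathcal{E}_1\le CE_1+(\text{errors decaying like }e^{-t}\mathrm{poly}(t))\le C(E_1(0)+m_2^2(0))$, and one more use of \eqref{eq:rereAa} yields $\mathcal{E}_1^{1/2}(t)<C(\mathcal{E}_1^{1/2}(0)+m_2(0))$ for all $t\ge0$, with the stated dependence of the constant.

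The main obstacle is exactly the order of quantifiers in closing the bootstrap: $\Phi$, and hence $C_\ast$, depends on the threshold $M$ through the polynomial $(\mathcal{A}^{1/2}(0)+M^{1/2}t)$ sitting inside $\varphi$, so one must verify that $M$ can be fixed once and for all — as a function of $\mathcal{A}^{1/2}(0)$ and of the a priori bounds on $m_2(0),\mathcal{E}_1(0)$, before $\delta$ is chosen — and that with that $M$ one indeed has $C_\ast'(\mathcal{E}_1^{1/2}(0)+m_2(0))<M^{1/2}$ for $\delta$ small. Making this airtight reduces to tracking, term by term in \Cref{lema:E_1_evo,lemma:x_by_t,asdf}, that every constant depends on the solution only through bounds on $m_2(0)$ and on $\mathcal{E}_0(0)+\mathcal{A}(0)$ — which those lemmas are stated to guarantee — after which everything else is a routine Gr\"onwall/continuity computation; the only genuinely new bookkeeping, beyond the polarized case, is that $\mathcal{A}$ must be carried along as an independent, polynomially growing quantity.
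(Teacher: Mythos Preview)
Your argument is correct and uses the same three lemmas in the same roles as the paper's proof. The one substantive difference is the choice of bootstrap hypothesis. The paper bootstraps on the single condition $\mathcal{E}_1^{1/2}<\delta'$, where $\delta'\in(0,1)$ is a fixed Sobolev threshold chosen so that $\mathcal{E}_1^{1/2}<\delta'$ already forces $\abs{z_x},\abs{v_x}<1$. This choice dissolves the order-of-quantifiers worry you flag: $\delta'$ is fixed before anything else and depends on nothing, and on the bootstrap interval one has $\mathcal{E}_1^{n_i/2}\le 1$ directly in \eqref{eq:E_1_evo}, so no auxiliary threshold $M$ on $E_1$ is needed. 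The paper then bounds $\mathcal{A}^{1/2}(t)\le\mathcal{A}^{1/2}(0)+2t$ from $\mathcal{E}_0\le\mathcal{E}_1<1$, invokes \Cref{asdf} (not just its $t=0$ consequence \eqref{eq:rereAa}) to see that $E_1$ itself is bounded on the interval, and the Gr\"onwall step and the closure via \Cref{lemma:x_by_t} and \eqref{eq:rereAa} proceed exactly as in your argument. Your two-condition bootstrap on $\{E_1\le M\}\cap\{\abs{z_x},\abs{v_x}<1\}$ reaches the same conclusion but requires the extra re-plugging of the Gr\"onwall bound into \Cref{lemma:x_by_t} \emph{inside} the bootstrap interval to improve $\abs{z_x},\abs{v_x}$; note that $M$ can simply be fixed at the outset (say $M=1$), after which $C_\ast'$ is determined and $\delta$ is chosen last, so the circularity you worry about never materializes.
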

\begin{proof}
Let $\delta'>0$ be such that $\delta'<1$ and such that if $\mathcal{E}_1^{1/2} < \delta'$ then $\abs{z_x},\abs{v_x} <1$. The existence of $\delta'$ is justified by Sobolev embedding.   Now suppose that $\mathcal{E}_1^{1/2}(0),m_2(0)<\delta << \delta'$. The value of $\delta$ will be specified in a moment. The only property that we will use now is that since $\delta < \delta'$ and $\mathcal{E}_1^{1/2}(0)<\delta$,  then $\mathcal{E}_1^{1/2}(t) < \delta'$ for at least an interval of time. Consider $$\widetilde{T} = \sup \{s: \mathcal{E}_1^{1/2}(t) < \delta' \text { for } t \in [0,s)\}.$$ For $t \in [0,\tilde{T})$, we have $\abs{z_x},\abs{v_x}<1$, and therefore we are allowed to apply \Cref{lemma:x_by_t}. This lemma asserts that
$$\mathcal{E}_1 \leq CE_1 + C(t+1)^{2}e^{-2t}m_1(0)^2 + Ce^{-t}E_1(\mathcal{A}^{k}+E_1^k) \ \text{ for } t \in [0,\tilde{T}).$$
Now by the evolution equation for $\mathcal{A}$, \cref{eq:A_evo}, we know that $$\mathcal{A}^{1/2} \leq \mathcal{A}^{1/2}(0) + \int_0^t 2\mathcal{E}_1^{1/2}(s) \ ds \leq \mathcal{A}^{1/2}(0) + 2t \ \ \ \ \ \forall t \in [0,\widetilde{T}),
$$
so $e^{-t}\mathcal{A}^{k}$ is bounded, in $[0,\widetilde{T})$, by a constant that just depends on a bound on $\mathcal{A}^{1/2}(0)$. Hence $\abs{\mathcal{E}_1}(t),\abs{m_1}(t) <1$ and $e^{-t}\mathcal{A}^{m_i}(t)$ is bounded for $t \in [0,\widetilde{T})$. Using these bounds and the previous lemma (\Cref{asdf}), we see that $E_1$ is bounded in $[0,\widetilde{T})$, and the bound depends on a bound on $m_1(0)$, $\mathcal{E}_1^{1/2}(0)$ and on $\mathcal{A}^{1/2}(0)$. As a consequence, $E_1^{1+n_i/2} < CE_1 \ \forall t \in [0,\widetilde{T})$, where again, $C$ depends on a bound on $m_1(0)$, $\mathcal{E}_1^{1/2}(0)$ and on $\mathcal{A}^{1/2}(0)$. Using this fact, we specialize the conclusion of \Cref{lemma:x_by_t}, obtaining
\begin{equation}
\mathcal{E}_1 \leq CE_1 + C(t+1)^2 e^{-2t}m_1^2(0) \ \forall t \in [0,\widetilde{T}).
\label{eq:DS}
\end{equation}
Here the constant $C$ depends on a bound on $m_1(0)$, $\mathcal{E}_1^{1/2}(0)$ and on $\mathcal{A}^{1/2}(0)$. Now, we proceed to control our energies. First, by the \cref{eq:E_1_evo} and the fact that $\mathcal{E}_1 <1$ and $\mathcal{A}^ke^{-t}< Ce^{-t/2}$ in $[0,\widetilde{T})$, we know that
$$
\begin{aligned}
\dot{E}_1 & \leq  \ C(t+1)e^{-2t}E_1 + C(t+1)e^{-t}m_2(0)E_1^{1/2} + Ce^{-t/2}E_1\ \ \ \forall t \in [0,\widetilde{T}), \\
\end{aligned}
$$
and then,
$$\dot{E}_1 \leq Ce^{-t/2}(E_1 + m_2(0)E_1^{1/2}) \ \forall t \in [0,\widetilde{T}].$$
Using Gronwall
\begin{equation}
E_1^{1/2} \leq C(E_1^{1/2}(0) + m_2(0))  \ \forall t \in [0,\widetilde{T}).
\label{eq:almost_valid_for_ever}
\end{equation}
The constant here depends on a bound on $m_2(0), \mathcal{E}_1^{1/2}
(0)$ and on $\mathcal{A}^{1/2}(0)$. Now, using the \cref{eq:DS,eq:almost_valid_for_ever} we get
$$
\begin{aligned}
\mathcal{E}_1^{1/2} &\leq C E_1^{1/2} + Cm_1(0) \leq C (E_1^{1/2}(0) + m_2(0)) \\ & \leq
C(\mathcal{E}_1^{1/2}(0)+ m_2(0)) \ \forall t \in [0,\widetilde{T}), \\
\end{aligned}
$$
where in the last inequality, we have used the \cref{eq:rereAa}. Summarizing
\begin{equation}\mathcal{E}_1^{1/2} \leq C(\mathcal{E}_1^{1/2}(0)+ m_2(0)) \ \forall t \in [0,\widetilde{T}).
\label{eq:wawa}
\end{equation}
As we are asking $\mathcal{E}_1^{1/2}(0),m_2(0)$ to be less than $\delta$, we have $\mathcal{E}_1^{1/2} \leq C 2\delta \ \forall t \in [0,\widetilde{T})$. Now, if we require $\delta < \frac{\delta'}{4C}$ then $\mathcal{E}_1^{1/2}(t) < \delta'/2 < \delta' \ \forall t \in [0,\widetilde{T})$ and hence $\widetilde{T} = +\infty$. To sum up, if $m_2(0),\mathcal{E}_1^{1/2}(0) < \delta$ then 
$$E_1^{1/2} \leq C(E_1^{1/2}(0) + m_2(0)) \ \ \forall t \geq 0$$
and since we have the \cref{eq:wawa}, then we also have
$$\mathcal{E}_1^{1/2} \leq C(\mathcal{E}_1^{1/2}(0) + m_2(0)) \ \ \forall t \geq 0.$$
\end{proof}
\subsection{Second order energy estimate and the proof of the main stability theorem.}
In order to extend our results for a more general class of functions we need to control one more derivative. For this, we have to generalize \Cref{th:clave_no_pol} to $E_2:= E_1 + E[z_{tt},v_{tt}]$ and $\mathcal{E}_2:= \mathcal{E}_1 + E[z_{xx},v_{xx}]$. The arguments here are essentially the same as above. For this reason, we only state the results, limiting ourselves to a few brief comments.
\begin{lemma}\label{lemma:E_2_evo}
For every solution to the system, \cref{eq:sysR,eq:znpol,eq:v}, whose initial data differs only on a compact set from that of the background solution, and such that $m_0(0)<2R_0/3$, there is a constant $C>0$, depending only on a bound on $m_3(0)$ and on $\mathcal{E}_0(0) + \mathcal{A}(0)$, such that
\begin{equation}
\begin{aligned}
\dot{E}_2 & \leq  \ C(t+1)e^{-2t}E_2 + C(t+1)e^{-t}m_2(0)E_2^{1/2}\\ & + Ce^{-t}E_2(1+ E_2^{k/2} + \mathcal{E}_2^{k/2}+\mathcal{A}^{k/2}). \label{eq:E_2_evo}
\end{aligned}
\end{equation}
\end{lemma}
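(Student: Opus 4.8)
The plan is to imitate, one derivative higher, the proof of \Cref{lema:E_1_evo}. Since $E_2=E_1+E[z_{tt},v_{tt}]$ and $\dot E_1$ is already controlled by \cref{eq:E_1_evo} (and $E_1\le E_2$, $\mathcal{E}_1\le\mathcal{E}_2$, so that bound is absorbed into the right-hand side of \cref{eq:E_2_evo}), it suffices to estimate $\dot E[z_{tt},v_{tt}]$. Because our solutions are of locally $x$-compact support, we may differentiate under the integral sign and integrate by parts. Differentiating \cref{eq:znpol,eq:v} twice with respect to $t$ and using $\partial_t{W_b}_x=0$ gives
\begin{gather*}
(z_{tt})_{tt}-(z_{tt})_{xx}+z_{tt}G+2z_tG_t+zG_{tt}+B_{tt}=g_{tt},\\
(v_{tt})_{tt}-(v_{tt})_{xx}+v_{tt}(G+4{W_b}_x^2)+2v_tG_t+vG_{tt}+D_{tt}=0,
\end{gather*}
so that, adding and subtracting $G_b$ and integrating by parts,
\begin{align*}
\dot E[z_{tt},v_{tt}]=\int_\R &z_{ttt}\bigl[z_{tt}(G_b-G)-2z_tG_t-zG_{tt}-B_{tt}+g_{tt}\bigr]\\
&+v_{ttt}\bigl[v_{tt}(G_b-G)-2v_tG_t-vG_{tt}-D_{tt}\bigr]\,dx.
\end{align*}

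The bracketed ``$F,G$-type'' contributions (those involving $G_b-G$, $G_t$, $G_{tt}$, $g_{tt}$) are handled exactly as in the polarized case and in \Cref{lema:E_1_evo}: by \Cref{lemma:coef_est_2} one has $|G_b-G|\le C(t+1)e^{-2t}m_1(0)\sqrt{G_b}$ and, since $\partial_t^kG_b=0$, also $|\partial_t^kG|=|\partial_t^k(G-G_b)|\le C(t+1)e^{-2t}m_{k+1}(0)\sqrt{G_b}$ for $k\ge1$; together with the decay of $g_{tt}$ from \cref{eq:galpha_est}, Cauchy--Schwarz against $z_{ttt},v_{ttt}\in L^2$ (bounded by $E_2^{1/2}$) yields a contribution $\le C(t+1)e^{-2t}E_2+C(t+1)e^{-t}m_2(0)E_2^{1/2}$, i.e. the first two terms of \cref{eq:E_2_evo}; it is here that $m_3(0)$ enters, through $g_{tt}$ and $G_{tt}$.

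The new work is $\int_\R z_{ttt}B_{tt}+v_{ttt}D_{tt}\,dx$. Using the structural description \cref{eq:B_descrip,eq:D_descrip}, $B$ and $D$ are linear combinations of terms $R^{-m/2}e^{-4\tilde W}\alpha$, $m\ge1$, with $\alpha$ a product of \emph{at most first-order} derivatives of $z,v$ times coefficients that are bounded to the future (and carrying a factor $1/\cosh(2x)$ whenever $\alpha$ has no derivatives). Differentiating twice in $t$ turns each into a sum of terms $R^{-m/2}e^{-4\tilde W}\beta$, with $|(R^{-m/2})_t|,|(R^{-m/2})_{tt}|\le CR^{-m/2}$ and $|\partial_t^je^{-4\tilde W}|$ bounded, where $\beta$ is a product of derivatives of $z,v$ of total order $\le3$ in which a \emph{purely spatial} second derivative $z_{xx},v_{xx}$ never occurs. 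Designating $z_{ttt}$ (or $v_{ttt}$) multiplying $\beta$ as the main derivative, the strategy is: if $\beta$ carries at least one derivative, apply Cauchy--Schwarz between the main derivative and a top-order derivative factor of $\beta$ --- both $L^2$-controlled by $E_2^{1/2}$, using $E[z_{tt},v_{tt}]\subset E_2$ --- giving a factor $E_2$, and estimate the remaining factors in $L^\infty$ by Sobolev embedding,
\begin{align*}
&\norm{z}_\infty,\norm{v}_\infty\le C(\mathcal{A}^{1/2}+\mathcal{E}_0^{1/2}),\qquad
\norm{z_t}_\infty,\norm{z_x}_\infty,\norm{v_t}_\infty,\norm{v_x}_\infty\le CE_1^{1/2},\\
&\norm{z_{tt}}_\infty,\norm{v_{tt}}_\infty\le CE_2^{1/2},\qquad
\norm{z_{tx}}_\infty,\norm{v_{tx}}_\infty\le C\mathcal{E}_2^{1/2},
\end{align*}
the last line because $E[z_{tt},v_{tt}]\subset E_2$ controls $z_{ttx},v_{ttx}$ while $E[z_{xx},v_{xx}]\subset\mathcal{E}_2$ controls $z_{txx},v_{txx}$; if instead $\beta$ has only zeroth-order factors, every such $\beta$ comes with a weight $1/\cosh(2x)$, so pair the main derivative with $z/\cosh(2x)$ (or $v/\cosh(2x)$) --- whose $L^2$-norm is bounded by $\mathcal{E}_0^{1/2}\le E_2^{1/2}$ --- so Cauchy--Schwarz again gives a factor $E_2$, and bound the rest by $\mathcal{A}^{1/2}+\mathcal{E}_0^{1/2}$. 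Since $R^{-m/2}e^{-4\tilde W}\le Ce^{-t}$, each term is $\le Ce^{-t}E_2^{1+r_i/2}\mathcal{E}_2^{n_i/2}\mathcal{A}^{m_i/2}$; adding the bound for $\dot E_1$ yields \cref{eq:E_2_evo}, with constants depending only on $m_3(0)$ and on $\mathcal{E}_0(0)+\mathcal{A}(0)$ by the corresponding dependence in \Cref{lemma:coef_est_2} and in \Cref{lema:E_1_evo}.

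The routine-but-delicate part is the bookkeeping for $B_{tt}$ and $D_{tt}$: one must check that differentiating twice in $t$ never produces $z_{xx}$ or $v_{xx}$ (it does not, because each factor of $\alpha$ has at most one derivative and $\partial_t$ adds no spatial derivative), and one must keep track of exactly which $L^\infty$-norms require $E_2$ and which require $\mathcal{E}_2$. The genuine structural point --- and the reason $\mathcal{E}_2$ has to be carried along in the subsequent lemmas --- is that $E[z_{tt},v_{tt}]$ by itself does not control the mixed derivative $z_{txx}$, so the second-order factors $z_{tx},v_{tx}$ that appear in $\beta$ force $\mathcal{E}_2$ into the estimate. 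Everything else is a direct transcription of the arguments already carried out for \Cref{lema:E_1_evo}.
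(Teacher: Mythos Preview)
Your proposal is correct and follows exactly the approach the paper intends: the paper omits the proof of this lemma, stating that ``the arguments here are essentially the same as above,'' and your write-up carries out precisely that one-derivative-higher repetition of the proof of \Cref{lema:E_1_evo}. One small slip: the bound $\norm{z_x}_\infty,\norm{v_x}_\infty\le CE_1^{1/2}$ is not quite right, since controlling $\norm{z_x}_\infty$ via Sobolev requires $\norm{z_{xx}}_{L^2}$, which lies in $\mathcal{E}_1$ rather than $E_1$ (this is exactly how the paper states it in the proof of \Cref{lema:E_1_evo}); the fix is harmless, as $\mathcal{E}_1\le\mathcal{E}_2$ and the right-hand side of \cref{eq:E_2_evo} already carries $\mathcal{E}_2^{n_i/2}$ factors.
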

\begin{lemma}\label{lemma:xx_by_tt}
If during an interval of time $[0,T]$ we have a solution with $\abs{z_x},\abs{v_x}, \abs{z_{xx}}$,\\ $\abs{v_{xx}} <1$, whose initial data differs only on a compact set from that of the background solution, and such that $m_0(0)<2R_0/3$, then there is a constant $C>0$, depending only on a bound on $m_2(0)$ and on $\mathcal{E}_0(0)+\mathcal{A}(0)$, such that
\begin{equation}
\mathcal{E}_2 \leq CE_2 + C(t+1)^{2}e^{-2t}m_2(0)^2(1+\mathcal{A}^{k}+E_1^k) + Ce^{-2t}E_1(1+\mathcal{A}^{2k}+E_1^{2k}) \ \ \text{for } t \in [0,T].
\end{equation}
\begin{proof}[Comments]
Doing the same computations as before leads to the need to control $\norm{\nabla B}_{L^2}^2,\norm{\nabla{D}}_{L^2}^2$. Using the \cref{eq:str1,eq:str2,eq:str3,eq:str4} to obtain a formula for $\nabla B$ and $\nabla D$, we found $$\mathcal{E}_2 \leq C(t+1)^2e^{-2t}m_2(0)^2 + Ce^{-2t}E_1(1+\mathcal{A}^k+E_1^k) + Ce^{-2t}\mathcal{E}_1(1+\mathcal{A}^k+E_1^k),$$
where, as in \Cref{lemma:x_by_t}, we have used the extra hypothesis to bound $\norm{u}_\infty$. Then, the use of \Cref{lemma:x_by_t} finishes the proof.
\end{proof}
\end{lemma}
\begin{lemma}\label{lemma:tt_by_xx}
For every solution whose initial data differs only on a compact set from that of the background solution, and such that $m_0(0)<2R_0/3$, there is a constant $C>0$, depending only on a bound on $m_2(0)$ and $\mathcal{E}_0(0)$, such that
\begin{equation}
E_2 \leq C\mathcal{E}_2 + C(t+1)^{2}e^{-2t}m_2(0)^2(1+\mathcal{A}^k + \mathcal{E}_1^k) + Ce^{-2t}\mathcal{E}_1(1+\mathcal{A}^{2k}+\mathcal{E}_1^{2k}),
\end{equation}
in particular, evaluating at $t=0$
\begin{equation}
E_2(0) \leq C(\mathcal{E}_2(0) + m_2^2(0)), 
\label{eq:rere}
\end{equation}
and now the constant also depends on a bound on $\mathcal{A}(0) + \mathcal{E}_1(0)$.
\end{lemma}
\begin{theorem}\label{th:nonpol_comp}
There is some $\delta>0$, such that, for every solution to the system, whose initial data differs only on a compact set from that of the background solution, and such that $\mathcal{E}_2^{1/2}(0),m_3(0)<\delta$, there is a constant $C$, such that 
\begin{equation}
E_2^{1/2} < C(E_2^{1/2}(0)+ m_3(0)) \ \ \forall t\geq 0,
\end{equation}
and
\begin{equation}
\mathcal{E}_2^{1/2} < C(\mathcal{E}_2^{1/2}(0)+ m_3(0)) \ \ \forall t\geq 0,
\end{equation}
for some constant $C$ that just depends on a bound on $m_3(0),\mathcal{E}_2(0)$ and on $\mathcal{A}^{1/2}(0)$.
\end{theorem}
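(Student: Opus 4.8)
The plan is to run the same continuity (bootstrap) argument as in the proof of \Cref{th:clave_no_pol}, but now feeding in \Cref{lemma:E_2_evo,lemma:xx_by_tt,lemma:tt_by_xx} in place of \Cref{lema:E_1_evo,lemma:x_by_t,asdf}. First I would fix, using Sobolev embedding in one spatial dimension, a constant $\delta'\in(0,1)$ small enough that $\mathcal{E}_2^{1/2}(t)<\delta'$ forces $\abs{z_x},\abs{v_x},\abs{z_{xx}},\abs{v_{xx}}<1$ at time $t$; this is precisely the pointwise smallness needed to invoke \Cref{lemma:xx_by_tt}. Then assume $\mathcal{E}_2^{1/2}(0),m_3(0)<\delta$ with $\delta\ll\delta'$ and also $\delta<2R_0/3$ (so $m_0(0)<2R_0/3$); the exact threshold on $\delta$ will be chosen at the very end. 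Set $\widetilde T:=\sup\{\,s:\mathcal{E}_2^{1/2}(t)<\delta' \text{ for all } t\in[0,s)\,\}$, which is positive by continuity, and note that on $[0,\widetilde T)$ one has $\mathcal{E}_2<1$ and the hypotheses of \Cref{lemma:xx_by_tt} are in force.

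On $[0,\widetilde T)$, the evolution inequality \cref{eq:A_evo} together with $\mathcal{E}_1\le\mathcal{E}_2<1$ gives at worst linear growth, $\mathcal{A}^{1/2}(t)\le \mathcal{A}^{1/2}(0)+2t$, so every factor $e^{-t}\mathcal{A}^{m_i/2}(t)$ is bounded on $[0,\widetilde T)$ by a constant depending only on $\mathcal{A}^{1/2}(0)$. Combining this with \Cref{lemma:xx_by_tt} and \Cref{lemma:tt_by_xx} (and \cref{eq:rere} for the initial time) shows that $E_2$ is bounded on $[0,\widetilde T)$ by a constant depending only on bounds on $m_3(0)$, $\mathcal{E}_2(0)$ and $\mathcal{A}^{1/2}(0)$. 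Hence $E_2^{1+r_i/2}\le C E_2$ and $E_2^{1+n_i/2}\le CE_2$ there, and plugging these back into \Cref{lemma:xx_by_tt} yields the clean inequality $\mathcal{E}_2\le C E_2 + C(t+1)^2 e^{-2t} m_2^2(0)$ on $[0,\widetilde T)$ (recall $m_2(0)\le m_3(0)$), with $C$ of the same dependence.

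Next I would insert $\mathcal{E}_2<1$ and $E_2^{1+r_i/2}\le CE_2$ into the evolution inequality \cref{eq:E_2_evo} of \Cref{lemma:E_2_evo}, and use $(t+1)e^{-t},(t+1)e^{-2t}\le Ce^{-t/2}$ together with $e^{-t}\mathcal{A}^{m_i/2}\le Ce^{-t/2}$ to obtain $\dot E_2\le Ce^{-t/2}\big(E_2+m_2(0)E_2^{1/2}\big)$ on $[0,\widetilde T)$. A Gronwall-type argument then gives $E_2^{1/2}\le C\big(E_2^{1/2}(0)+m_3(0)\big)$, and via the displayed bound $\mathcal{E}_2\le CE_2+C(t+1)^2e^{-2t}m_2^2(0)$ and \cref{eq:rere} this upgrades to $\mathcal{E}_2^{1/2}\le C\big(\mathcal{E}_2^{1/2}(0)+m_3(0)\big)\le 2C\delta$ on $[0,\widetilde T)$, with $C$ depending only on bounds on $m_3(0)$, $\mathcal{E}_2(0)$ and $\mathcal{A}^{1/2}(0)$. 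Finally, choosing $\delta<\delta'/(4C)$ forces $\mathcal{E}_2^{1/2}(t)<\delta'/2<\delta'$ on $[0,\widetilde T)$, so by the usual open--closed argument $\widetilde T=+\infty$, and both asserted estimates hold for all $t\ge0$.

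I expect the delicate point not to lie in this bootstrap, which is essentially verbatim the one in \Cref{th:clave_no_pol}, but rather in the proofs of \Cref{lemma:E_2_evo,lemma:xx_by_tt,lemma:tt_by_xx}: differentiating the system \cref{eq:znpol,eq:v} twice makes the perturbative terms $B,D$ acquire second derivatives, and one must verify that in $B'',D''$ (and after substituting $z_{xx},v_{xx}$ from the equations, as in \Cref{lemma:xx_by_tt}) every top-order factor still appears at most linearly, every remaining factor is controlled in $L^\infty$ by Sobolev embedding in terms of $\mathcal{A},\mathcal{E}_0,\mathcal{E}_1,\mathcal{E}_2$, and every prefactor is of the form $R^{-m/2}e^{-4j\tilde W}\le Ce^{-t}$. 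Granting that structural bookkeeping, which is asserted to go through \emph{with minor modifications}, the theorem follows exactly as above.
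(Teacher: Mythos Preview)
Your proposal is correct and follows exactly the paper's approach: the paper's own proof is the single sentence ``Having the three previous lemmas, we can repeat the proof of \Cref{th:clave_no_pol},'' and you have carried out precisely that repetition, substituting \Cref{lemma:E_2_evo,lemma:xx_by_tt,lemma:tt_by_xx} for \Cref{lema:E_1_evo,lemma:x_by_t,asdf} and closing the bootstrap in the same way. Your remark that the real work lies in the structural bookkeeping behind \Cref{lemma:E_2_evo,lemma:xx_by_tt,lemma:tt_by_xx} is also in line with the paper, which states those lemmas without proof on the grounds that the arguments are ``essentially the same as above.''
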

\begin{proof}
Having the three previous lemmas, we can repeat the proof of \Cref{th:clave_no_pol}.
\end{proof}
Finally, we arrived at the main result of this section.
\begin{theorem}\label{cor:nonpol_comp_2}
There is a number $\delta>0$, such that for any smooth solution to the system, \cref{eq:sysR,eq:sysW,eq:sysq}, $(R, W,q)$, with initial data that differs from that of the background solution only on a compact set, the following holds. If $m_3(0)$ and $\widetilde{\mathcal{M}}_3(0)$ are both less than $\delta$,  
$$\widetilde{\mathcal{M}}_3(t)\leq Ce^{-t}(t+1)(\widetilde{\mathcal{M}}_3(0) + m_3(0)) \ \ \forall t \geq 0.$$
Moreover, the constant $C$ just depends on a bound on $m_3(0)$ and on $\widetilde{\mathcal{M}}_3(0)$.
\end{theorem}
\begin{proof}
The previous theorem, together with the estimate for $\mathcal{A}$, gives a bound for $\mathcal{M}_2$. Furthermore, $\mathcal{A}(0)+\mathcal{E}_2(0)$ is bounded by $\mathcal{M}_2(0)$. Now the result follows from \Cref{lemma:passage_complete}.
\end{proof}
\section{Stability for general perturbations}\label{sec:gen}
So far, we have proved results concerning compactly supported perturbations. In this section, we provide an argument to generalize these results for solutions in a larger functional space. To do this, we approximate the initial data given by a sequence of initial data which, as before, differ from that of the backgrounds solution only on a compact set. We will see that the sequence of solutions converges to the solution with the given initial data in a suitable sense. The argument is similar to the Cauchy stability but with a different functional space. Consider
$(R_1, W_1,q_1)$ and $(R_2, W_2,q_2)$, two solutions with initial data that differs from that of the background solution only on a compact set. Define $z_1 = R_1^{1/2}(W_1-W_b)$, $v_1 = R_1^{1/2}e^{-2W}q_1$ and similarly $z_2$ and $v_2$. Then, for $i=1,2$, we have
$$
\begin{aligned}
& (z_i)_{tt} - (z_i)_{xx} + f(u_i) = g_i, \\
& (v_i)_{tt} - (v_i)_{xx} + h(u_i) = 0, 
\end{aligned}
$$
where $u$ is a vector whose components are $Gv$,$Gz$,$R^{-j/2}z$,$R^{-j/2}v$, $R^{-j/2}z_x$,$R^{-j/2}z_t$, $R^{-j/2}v_x$,$R^{-j/2}v_t$, $\tfrac{R_t}{R}v$,$\tfrac{R_x}{R}v,\tfrac{R_t}{R}z$,$\tfrac{R_x}{R}z$,$z$,$z_x$,$z_t$,$v$,$v_x$,$v_t$, with $j=1,2,3$, and $f$ and $h$ are smooth functions. Additionally, we denote $\Delta z = z_2-z_1$, and similarly $\Delta v$, $\Delta f$, $\Delta g$, and $\Delta R$. Taking the difference we have
\begin{align}
& (\Delta z)_{tt} - (\Delta z)_{xx} + \Delta f = \Delta g, \label{eq:Deltaz}\\
& (\Delta v)_{tt} - (\Delta v)_{xx} + \Delta h = 0. \label{eq:Deltav}
\end{align}

In order to control the sequence, we need to introduce the following energy
$$\mathcal{H}_n \! = \! \! \frac{1}{2}\! \sum_{i=0}^n \! \int_\R [(\partial_x^i \Delta z)_x]^2 + [(\partial_x^i \Delta z)_t]^2 + (\partial_x^i \Delta z) ^2 + [(\partial_x^i \Delta v)_x]^2 + [(\partial_x^i \Delta v)_t]^2 + (\partial_x^i \Delta v)^2  dx.$$
Again, some estimates are required to bound $\mathcal{H}_n$. For convenience, we introduce the notation
$$m_k[f](t) := \norm{f}_{C^k}(t) + \norm{\partial_t f}_{C^k}(t), \ \ \ m_k(t) := \sup_{i=1,2} m_k[R_i](t). $$
Also note that we now have two definitions for $\mathcal{E}_n$, namely, one for $(z_1,v_1)$ and one for $(z_2,v_2)$. Now, we will call $\mathcal{E}_n$ to the maximum of these two.

\begin{lemma}[Control of $\mathcal{H}_n$]\label{prop:controlHn}
Let $(R_1,z_1,v_1)$ and $(R_2,z_2,v_2)$ be two solutions to the system given by the \cref{eq:sysR,eq:znpol,eq:v}, such that their initial data at $t=0$ differ from that of the background solution only on a compact set. Suppose additionally that $m_0(0)<2R_0/3$. Then, there is a continuous function $\kappa:\R^2 \to \R^+$ such that
\begin{equation}
\dot{\mathcal{H}}_n \leq [\kappa(\norm{u_1}_{H^n},\norm{u_2}_{H^n})](\mathcal{H}_n + e^{-2t}(t+1)^2m_{n+1}^2[\Delta R])(0)) \label{eq:controlcauchy}
\end{equation}
Furthermore, $\kappa$ only depends on a bound on $m_{n+1}(0)$.
\end{lemma}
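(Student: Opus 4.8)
\emph{The plan} is to run the classical Cauchy--stability scheme: differentiate $\mathcal{H}_n$ in time, integrate by parts using the equations \cref{eq:Deltaz,eq:Deltav}, and estimate the resulting sources. Since the data of $(R_i,z_i,v_i)$ differ from the background only on a compact set, finite speed of propagation makes $\Delta z,\Delta v$ and all their derivatives of locally $x$--compact support, so $\mathcal{H}_n(t)<\infty$ and differentiating under the integral and integrating by parts are justified. For each $0\le i\le n$ I would apply $\partial_x^i$ to \cref{eq:Deltaz,eq:Deltav}; writing $\mathcal{H}_n=\sum_{i=0}^n(E_i^z+E_i^v)$ with $E_i^z=\tfrac12\int_\R(\partial_x^i\Delta z)_t^2+(\partial_x^i\Delta z)_x^2+(\partial_x^i\Delta z)^2\,dx$, the standard computation gives
\[
\dot E_i^z=\int_\R(\partial_x^i\Delta z)_t\big[\partial_x^i\Delta z-\partial_x^i\Delta f+\partial_x^i\Delta g\big]\,dx\le 2\,(E_i^z)^{1/2}\big((E_i^z)^{1/2}+\norm{\partial_x^i\Delta f}_{L^2}+\norm{\partial_x^i\Delta g}_{L^2}\big),
\]
and likewise for $E_i^v$ with $\Delta h$ in place of $\Delta f$ and no $\Delta g$. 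Summing and using $ab\le\tfrac12(a^2+b^2)$, it then suffices to bound $\sum_{i\le n}\big(\norm{\partial_x^i\Delta f}_{L^2}^2+\norm{\partial_x^i\Delta h}_{L^2}^2+\norm{\partial_x^i\Delta g}_{L^2}^2\big)$ by the right--hand side of \cref{eq:controlcauchy}.

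\emph{The source $\Delta g$.} The function $g_i$ depends only on $R_i$, and $\Delta R=R_2-R_1$ again solves \cref{eq:sysR}; hence the arguments behind \Cref{lemma:coeff_estimates,lemma:coef_est_2} apply verbatim to $\Delta R$ and give $\abs{\partial^\alpha\Delta g}(t,x)\le C\,\tfrac{t+1}{e^{2t}\cosh^{3/2}(2x)}\,m_{\abs{\alpha}+1}[\Delta R](0)$, with $C$ depending only on a bound on $m_{n+1}(0)$. Squaring and integrating in $x$ --- the weight $\cosh^{-3}(2x)$ being integrable --- gives $\sum_{i\le n}\norm{\partial_x^i\Delta g}_{L^2}^2(t)\le C\,e^{-2t}(t+1)^2\,m_{n+1}^2[\Delta R](0)$, which is of the required form.

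\emph{The nonlinear terms.} I would write $\Delta u=u_2-u_1$ and $\Delta f=\big(\int_0^1 Df(u_1+s\,\Delta u)\,ds\big)\cdot\Delta u$; applying $\partial_x^i$, Leibniz and the Moser product inequality on $\R$ yields $\norm{\partial_x^i\Delta f}_{L^2}\le P(\norm{u_1}_{H^n},\norm{u_2}_{H^n})\,\norm{\Delta u}_{H^n}$ for a continuous $P$. Here one uses crucially that $f$ and $h$ depend on $u$ only through components carrying at most one derivative of $z,v$, so $\partial_x^i\Delta f$ involves $\Delta z,\Delta v$ to order $\le n+1$ and the coefficients to order $\le n$, all controlled by $\mathcal{H}_n^{1/2}$, $\norm{u_1}_{H^n}$ and $\norm{u_2}_{H^n}$ respectively. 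It remains to bound $\norm{\Delta u}_{H^n}$: each entry of $\Delta u$ is a bounded $R_2$--weight times one of $\Delta z,\Delta v,(\Delta z)_x,(\Delta z)_t,(\Delta v)_x,(\Delta v)_t$, plus a $\Delta$ of an $R$--coefficient times one of $z_1,v_1,(z_1)_x,(z_1)_t,(v_1)_x,(v_1)_t$. The $R_i$--coefficients and their derivatives being bounded (and in fact decaying) in terms of $m_{n+1}(0)$, the first type is $\le C\,\mathcal{H}_n^{1/2}$ in $H^n$, while by the $\Delta R$--analog of \Cref{lemma:coef_est_2} the second is $\le C\,e^{-t}(t+1)\,m_{n+1}[\Delta R](0)\,\norm{u_1}_{H^n}$. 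Hence $\norm{\Delta u}_{H^n}^2\le C\big(\mathcal{H}_n+e^{-2t}(t+1)^2\,m_{n+1}^2[\Delta R](0)\,\norm{u_1}_{H^n}^2\big)$ with $C$ depending only on a bound on $m_{n+1}(0)$, and similarly for $\Delta h$. Collecting the three steps and absorbing $P^2$, the polynomial--in--$t$ factors, and $\norm{u_1}_{H^n}^2$ into one continuous $\kappa(\norm{u_1}_{H^n},\norm{u_2}_{H^n})$ --- which inherits the dependence on a bound on $m_{n+1}(0)$ --- produces \cref{eq:controlcauchy}.

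\emph{Main obstacle.} The hard part will be the Moser/commutator bookkeeping in the third step: one must check that no term in $\partial_x^i\Delta f$ or $\partial_x^i\Delta h$ forces more than $n$ derivatives onto the coefficients or more than $n+1$ onto $\Delta z,\Delta v$ --- precisely why the structural fact that $f,h$ are functions of $u$ with only first--order components is essential --- and that every constant is tracked so that $\kappa$ ends up depending only on $\norm{u_1}_{H^n},\norm{u_2}_{H^n}$ and a bound on $m_{n+1}(0)$. Everything else is a routine energy estimate combined with the already established $R$--estimates.
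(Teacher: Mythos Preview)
Your proposal is correct and is precisely what the paper means by ``standard hyperbolic estimates'' --- the paper's own proof is the single sentence ``This follows from standard hyperbolic estimates,'' so your sketch is in fact considerably more detailed than the original. The energy differentiation, the treatment of $\Delta g$ via the $\Delta R$--analogues of \Cref{lemma:coeff_estimates,lemma:coef_est_2}, and the Lipschitz/Moser argument for $\Delta f,\Delta h$ exploiting that $f,h$ are smooth functions of $u$ (whose components involve at most first derivatives of $z,v$) are exactly the ingredients one expects; note only the harmless slip that the pointwise decay of $\partial^\alpha\Delta g$ is $e^{-t}$ rather than $e^{-2t}$ (mirroring a typo in \Cref{lemma:coef_est_2}), which after squaring still yields the required $e^{-2t}(t+1)^2$ in $L^2$.
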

\begin{proof}
This follows from standard hyperbolic estimates.
\end{proof}

Now we can enhance the results given by  \Cref{cor:polarized_compact_sup,cor:nonpol_comp_2}. 
\begin{reptheorem}{th:polarized_perturbations}[Stability for polarized perturbations]
Let $k\geq 3$ and assume $m_0(0)<2R_0/3$. Then, for any initial data to the \cref{eq:sysR,eq:sysW,eq:sysq}, satisfying $(R-R_b)(0,\cdot) \in C_0^k$, $\partial_t(R-R_b)(0,\cdot) \in C_0^{k-1}$, $(W-W_b)(0,\cdot) \in \widetilde{H}^k$, $\partial_t(W-W_b)(0,\cdot) \in \widetilde{H}^{k-1}$, and $q(0,\cdot) = q_t(0,\cdot) = 0$, there is a unique $C^2$ solution defined for all time $t \geq 0$ and
$$\widetilde{\mathcal{M}}_k(t) \leq Ce^{-t}(t+1)\left(\widetilde{\mathcal{M}}_k(0) + m_k(0)\right).$$
Moreover, the constant $C$ depends only on an upper bound on $m_k(0)$ and $k$.
\end{reptheorem}
\begin{proof}
Recall that we have already proved the existence for all time $t\geq 0$. We proceed in steps. 

\noindent
1. Setting: consider a sequence of initial data such that, $(R_i-R_b)(0,\cdot)$ and $({R_{i}}_t-{R_b}_t)(0,\cdot)$ are compactly supported and converge to $((R-R_b)(0,\cdot),(R-R_b)_t(0,\cdot))$ in $C_0^k \times C_0^{k-1}$. Similarly, we ask the same for the initial data of $W$, but this time, we require that the sequence $(z_i(0,\cdot),{z_i}_t(0,\cdot))$ converges to $(z(0,\cdot),z_t(0,\cdot))$ in $H^k \times H^{k-1}$. 
In what follows, we will sometimes use $v_i$, even though $v_i \equiv {v_i}_t \equiv 0$. In this way the argument presented here is general, and a slight modification applies to the non-polarized case. 

\noindent
2. Rough control: using the \cref{eq:Hk_estimate} we have
$$\norm{(z_i,v_i)}_{H^k}(t)+\norm{({z_i}_t,{v_i}_t)}_{H^{k-1}}(t) \leq  C(T+1) \leq C_T \ \forall t \in [0,T].$$

Note that we can choose the same constant $C_T$ for each $i$. Using \Cref{lemma:coef_est_2}, it is easy to see that  
$$\norm{u_i}_{H^{k-1}}(t) \leq C(\norm{(z_i,v_i)}_{H^k}(t)+\norm{({z_i}_t,{v_i}_t)}_{H^{k-1}}(t) )$$
where $C$ depends only on a bound on $m_{k}[R_i](0)$, and hence we can choose the same constant for each $i$. It follows that 
$$\norm{u_i}_{H^{k-1}}(t) \leq C(\norm{(z_i,v_i)}_{H^k}(t)+\norm{({z_i}_t,{v_i}_t)}_{H^{k-1}}(t) )\leq C_T \ \forall t \in [0,T], \ \forall i$$

\noindent
3. Cauchy Sequence: A quick computation using D'Alembert shows that for each fixed $T>0$, $(R_i,{R_i}_t)$ is a Cauchy sequence in $C([0,T],C_0^k)\times C([0,T], C_0^{k-1})$. Regarding $z_i$, the rough control obtained above allows us to apply the \cref{eq:controlcauchy} with $n=k-1$, but changing $\kappa$ with a constant independent of $i$. Integrating that inequality
$$\mathcal{H}_{k-1}(t) \leq C(\mathcal{H}_{k-1}(0) + m_{k}^2[\Delta R](0)) \ \ \forall t \in [0,T], \forall i.$$

Consequently, $(z_i,{z_i}_t)$ is a Cauchy sequence in $C([0,T],H^k)\times C([0,T],H^{k-1})$. By Sobolev embedding, $(z_i,{z_i}_t)$ is a Cauchy sequence $C([0,T],C^2)\times C([0,T],C^1)$ ($k\geq 3$). Using the equation we obtain 
$$\abs{\partial_t^2 \Delta z} \leq \abs{\partial_x^2 \Delta z} + Cm_{1}[\Delta R](0) + C(m_{2}[\Delta R](0) + \norm{\Delta z}_{H^2} + \norm{\Delta z_t}_{H^1}).$$ We have proved that, for each fixed $T>0$, $z_i$ is a Cauchy sequence in $C^2([0,T]\times \R,\R)$. The limit function is the $C^2$ solution of the initial data. We have also proved that $(z,z_t)$ is a Cauchy sequence in $C([0,T],H^k)\times C([0,T],H^{k-1})$.

\noindent 
4. Exponential decay: 
Now, by the \cref{eq:Hk_estimate}, we have
$$\mathcal{M}_k[z_i](t)\leq C(t+1)\left(\mathcal{M}_k[z_i](0)+m_k[R_i](0)\right) \ \forall t \geq 0,$$
where we have used the same constant for each $i$. Taking limit in the above estimate yields
$$
\mathcal{M}_k[z](t)\leq C(t+1)\left(\mathcal{M}_k[z](0)+m_k[R](0)\right) \ \forall t \geq 0.
$$
By the change of variable lemma, \Cref{lemma:passage_complete}, this solution $z$ corresponds to the smooth solution $W$ given in the statement. The estimate in the statement is a consequence of this lemma as well.
\end{proof}

\begin{reptheorem}{th:nonpol_perturbations}[Stability for non-polarized perturbations]
Assume $m_0(0) < 2R_0/3$. Then, for any initial data to the \cref{eq:sysR,eq:sysW,eq:sysq}, satisfying $(R-R_b)(0,\cdot) \in C_0^3$, $\partial_t(R-R_b)(0,\cdot) \in C_0^{2}$, $(W-W_b)(0,\cdot) \in \widetilde{H}^3$, $\partial_t(W-W_b)(0,\cdot) \in \widetilde{H}^{2}$, $q(0,\cdot) \in \widetilde{H}^3$, and $q_t(0,\cdot)$ in $\widetilde{H}^2$, there is a unique $C^2$ solution defined for all time $t \geq 0$. Furthermore, there exists $\delta>0$, independent of the solution, such that if $m_3(0)<\delta$, $\widetilde{\mathcal{M}}_3(0)<\delta$, then
$$\widetilde{\mathcal{M}}_3(t)\leq Ce^{-t}(t+1)(\widetilde{\mathcal{M}}_3(0) + m_3(0)). $$
Moreover, the constant $C$ depends only on an upper bound on $\delta$.
\end{reptheorem}
\begin{proof}
The argument is essentially the same, the difference being that now we work with $z_i$ and $v_i$, that it is \Cref{th:nonpol_comp} which allows us to use \Cref{prop:controlHn} with $n=2$, and also that we use the equation given by \Cref{cor:nonpol_comp_2}, version $\mathcal{M}_3$, instead of the \cref{eq:Hk_estimate}.
\end{proof}
\bibliography{referencias} 

\end{document}